\DeclareMathOperator{\conv}{conv}
\DeclareMathOperator{\Cayley}{Cayley}
\DeclareMathOperator{\Vor}{Vor}
\DeclareMathOperator{\vol}{vol}
\DeclareMathOperator{\wt}{wt}
\DeclareMathOperator{\supp}{supp}
\DeclareMathOperator{\Adjt}{Ad}
\DeclareMathOperator{\adjt}{ad}
\DeclareMathOperator{\tr}{tr}
\DeclareMathOperator{\ch}{ch}
\newcommand{\ZZ}{\ensuremath{\mathbb{Z}}}
\newcommand{\CUT}{\operatorname{CUT}}
\newcommand{\ad}{\operatorname{ad}}
\newtheorem{defin}{Definition}[section]
\newtheorem{theorem}[defin]{Theorem}
\newtheorem{proposition}[defin]{Proposition}
\newtheorem{lemma}[defin]{Lemma}
\newtheorem{example}[defin]{Example}
\newtheorem{corollary}[defin]{Corollary}
\newtheorem{definition}[defin]{Definition}
\newtheorem{remark}[defin]{Remark}
\newcommand\parapoint{\refstepcounter{defin}\smallskip\textbf{\thedefin.} }
\newcommand{\fonc}[5]{\begin{array}{ccccc}
#1 & : & #2 & \to & #3 \\
 & & #4 & \mapsto & #5 \\
\end{array}}
\begin{document}

\title{Coloring the Voronoi tessellation of lattices}

\date{March 22, 2021}

\author{Mathieu Dutour Sikiri\'c}
\address{M.~Dutour Sikiri\'c, Rudjer Boskovi\'c Institute, Bijeni\v cka
  54, 10000 Zagreb, Croatia}
\email{mathieu.dutour@gmail.com}

\author{David A. Madore}
\address{D.~Madore, T\'el\'ecom Paris / LTCI,
 D\'epartement INFRES, 19 place Marguerite Perey,
 91120 Palaiseau, France}
\email{david+math@madore.org}

\author{Philippe Moustrou}
\address{P.~Moustrou, Department of Mathematics and Statistics, UiT The Arctic University of Norway, N-9037 Troms{\o}, Norway}
\email{philippe.moustrou@uit.no}

\author{Frank Vallentin}
\address{F.~Vallentin, Department Mathematik/Informatik, Abteilung Mathematik, Universit\"at zu
  K\"oln, Weyertal~86--90, 50931 K\"oln, Germany}
\email{frank.vallentin@uni-koeln.de}

\begin{abstract}
  In this paper we define the chromatic number of a lattice: It is the
  least number of colors one needs to color the interiors of the cells
  of the Voronoi tessellation of a lattice so that no two cells
  sharing a facet are of the same color.

  We compute the chromatic number of the root lattices, their duals,
  and of the Leech lattice, we consider the chromatic number of
  lattices of Voronoi's first kind, and we investigate the asymptotic
  behaviour of the chromatic number of lattices when the dimension
  tends to infinity.

  We introduce a spectral lower bound for the chromatic number of
  lattices in spirit of Hoffman’s bound for finite graphs.  We compute
  this bound for the root lattices and relate it to the character
  theory of the corresponding Lie groups.
\end{abstract}

\keywords{lattice, Voronoi cell, chromatic number}

\subjclass{05C15, 52C07}

\maketitle

\markboth{M.~Dutour Sikiri\'c, D.~Madore, P.~Moustrou, and F.~Vallentin}{Coloring the Voronoi tessellation of lattices}

\tableofcontents

\section{Introduction}
\label{sec:introduction}

Let $\Lambda \subseteq \mathbb{R}^n$ be an $n$-dimensional lattice in
$n$-dimensional Euclidean space. One can tessellate space by lattice
translates of the lattice' Voronoi cell, which is defined as
\[
V(\Lambda) = \{x \in \mathbb{R}^n : \|x\| \leq \|x - v\| \text{ for
  all } v \in \Lambda \}.
\]
By $V(\Lambda)^\circ$ we denote the topological interior of
$V(\Lambda)$.  Now we consider translates $v + V(\Lambda)^\circ$, with
$v \in L$, as colored tiles of an $n$-dimensional mosaic in which one
has infinitesimal small interstices between the mosaic tiles. How many
colors does one need at least to get a colorful mosaic? In a colorful
mosaic two neighboring tiles receive different colors. This defines
the \emph{chromatic number} $\chi(\Lambda)$ of the lattice.

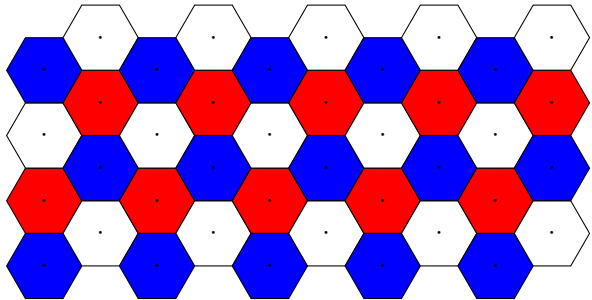
\begin{figure}[h]
\begin{center}
\begin{tikzpicture}[x=7.5mm,y=4.34mm]
  \tikzset{
    box/.style={
      regular polygon,
      regular polygon sides=6,
      color=black,
      minimum size=10mm,
      inner sep=0mm,
      outer sep=0mm,
      rotate=0,
    draw
    }
  }

\foreach \i in {0,...,4} {
 \node[box, fill = blue] at (2*\i,0) {$\cdot$};
 \node[box, fill = red] at (2*\i,2) {$\cdot$};
 \node[box, fill = white] at (2*\i,4) {$\cdot$};
 \node[box, fill = blue] at (2*\i,6) {$\cdot$};
 \node[box, fill = white] at (2*\i+1,1) {$\cdot$};
 \node[box, fill = blue] at (2*\i+1,3) {$\cdot$};
 \node[box, fill = red] at (2*\i+1,5) {$\cdot$};
 \node[box, fill = white] at (2*\i+1,7) {$\cdot$};
}
\end{tikzpicture}
\caption{Optimal coloring of the hexagonal lattice, $\chi(\mathsf{A}_2) = 3$.}
\end{center}
\end{figure}

More formally, we can also define the chromatic number of a lattice in
graph theoretical terms: Two distinct lattice translates of Voronoi
cells $v + V(\Lambda)$ and $w + V(\Lambda)$, with $v \neq w$, are
defining neighboring tiles whenever they share a facet,
i.e.\ their intersection is a polytope of maximal dimension $n-1$. The
differences $v - w$ are called \emph{strict Voronoi vectors} and the
set of these vectors is denoted by $\Vor(\Lambda)$.  Now the chromatic
number of $\Lambda$ equals the chromatic number of the Cayley graph on
the additive group $\Lambda$ with generating set $\Vor(\Lambda)$:
\[
\chi(\Lambda) = \chi(\Cayley(\Lambda, \Vor(\Lambda))).
\]
Here, the set of vertices of the Cayley graph are all elements of
$\Lambda$ and two vertices $v, w$ are adjacent whenever the difference
$v-w$ lies in the set of strict Voronoi vectors $\Vor(\Lambda)$. Note
that the Cayley graph is an $r$-regular infinite graph with
$r = |\Vor(\Lambda)|$.

\smallskip

In Section~\ref{ssec:lattices-voronoi-cells-voronoi-vectors} we recall
all the definitions and properties of lattices, their Voronoi cells, and
the Voronoi vectors, which we need later.

\medskip

The chromatic number of a lattice seems to be a natural parameter.
However, to the best of the authors' knowledge, see
\cite{MathOverflowQuestion} and \cite{EllenbergMathOverflowQuestion},
$\chi(\Lambda)$ has not been considered before.  The aim of this paper
is to start a systematic investigation of it.  Then, the following
questions immediately come to mind:

\subsection{Determination of the chromatic number}

\textit{What is the chromatic number of some interesting lattices? How to find
lower and upper bounds? Is there an algorithm to determine
$\chi(\Lambda)$ for a given lattice $\Lambda$?}

For instance, it is obvious that the chromatic number of the integer
lattice $\mathbb{Z}^n$ is two, an optimal coloring is given by the
black/white checkerboard pattern; see also
Theorem~\ref{thm:OrthogonalSum}.

We discuss simple lower and upper bounds for the chromatic number of a
general lattice in Section~\ref{ssec:easy-upper-bounds} and in
Section~\ref{ssec:easy-lower-bounds}. For instance, we show that
$\chi(\Lambda)$ is at most~$2^n$.

All two- and three-dimensional lattices are of Voronoi's first
kind. We consider the chromatic number of this class of lattices in
Section~\ref{sec:firstkind} where we compute the chromatic number of
all three-dimensional lattices. It would be interesting to have a
better understanding of the chromatic number of this class of lattices.

One of the most important classes of lattices are the root
lattices. We recall the definitions and classification in
Section~\ref{ssec:rootlattices}. One main result of our paper is the
determination of the chromatic number of all root lattices and their
duals. Table~\ref{table:ChromaticRecap} summarizes our results.

Note that we currently do not know the numerical value of
$\chi(\mathsf{D}_n)$. We only know it is equal to the chromatic number
of the (finite) vertex-edge-graph of the half-cube polytope
\[
\frac{1}{2} H_n = \conv\left\{x \in \{0,1\}^n : \sum_{k=1}^n x_k \text{ is even}\right\},
\]
which at the moment is only known up to dimension $n = 9$: For $n=4$,
$5$, $6$, $7$, $8$ and $9$ we have $\chi(\frac{1}{2}H_n) = 4$, $8$, $8$, $8$,
$8$ and $13$ (see \cite{OstergardChromatic9}).

\begin{table}[h]
\begin{tabular}{ccl}
\textit{lattice} & \textit{chromatic number} & \\
\hline
\\[-2ex]
$\mathbb{Z}^n$ & $2$ & Sec. \ref{sec:introduction}, Thm.~\ref{thm:OrthogonalSum}, Sec. \ref{sec:firstkind} \\
$\mathsf{A}_n$ & $n+1$ & Thm.~\ref{thm:An-and-An-star}, Thm. \ref{thm:lowera} \\
$\mathsf{A}_n^*$ & $n+1$ & Thm.~\ref{thm:An-and-An-star} \\
$\mathsf{D}_n$ & $\chi(\frac{1}{2}H_n)$ & Thm. \ref{thm:d} \\
$\mathsf{D}_n^*$ & $4$ & Sec. \ref{sec:introduction} \\
$\mathsf{E}_6$ & $9$ &  Sec. \ref{ssec:e6Schlafi}, Thm. \ref{thm:e}, Thm. \ref{Theorem_CritValues_E6}\\
$\mathsf{E}_6^*$ & $16$ & Thm. \ref{thm:chromaticdual} \\
$\mathsf{E}_7$ & $14$ &  Sec. \ref{ssec:e7e8sos}, Thm. \ref{thm:e}, Thm. \ref{Theorem_CritValues_E7} \\
$\mathsf{E}_7^*$ & $16$ & Thm. \ref{thm:chromaticdual} \\
$\mathsf{E}_8$ & $16$ & Sec. \ref{ssec:SpherePackingLeechE8}, Sec. \ref{ssec:e7e8sos}, Thm. \ref{thm:e}, Thm. \ref{Theorem_CritValues_E8} \\
$\Lambda_{24}$ & $4096$ & Sec. \ref{ssec:SpherePackingLeechE8} \\[1ex]
\end{tabular}
\caption{The chromatic number of important lattices, in particular the (irreducible) root lattices and their duals.}
\label{table:ChromaticRecap}
\end{table}

For the proof we use a generalization of a lower bound for the
chromatic number of finite graphs originally due to
Hoffman~\cite{Hoffman1970a}. Hoffman's bound is based on spectral
considerations: Let $A \in \mathbb{R}^{V \times V}$ be the adjacency
matrix of a finite graph $G = (V, E)$. Let $m(A)$ be the smallest
eigenvalue of $A$ and respectively let $M(A)$ be the largest
eigenvalue of $A$, then
\[
\chi(G) \geq 1 - \frac{M(A)}{m(A)}.
\]
Bachoc, DeCorte, Oliveira, and Vallentin~\cite{Bachoc2014a} showed how
to generalize the spectral bound (and its weighted variant due to
Lov\'asz \cite{Lovasz1979a}) from finite to infinite graphs. In
Section~\ref{ssec:setup} we review this generalization and specialize
it to $\chi(\Lambda)$. Here, classical Fourier analysis is
used. We show in Corollary~\ref{cor:Hoffman} that
\[
\chi(\Lambda) \geq 1 - \left(\inf\limits_{x \in \mathbb{R}^n}
  \frac{1}{|\Vor(\Lambda)|} \sum_{u \in \Vor(\Lambda)} e^{2\pi i u
    \cdot x}\right)^{-1},
\]
holds.

In Section~\ref{sec:spectralbound} we compute this bound for all
irreducible root lattices. Surprisingly, the result of this
computation can already be found in an Oberwolfach report by
Serre~\cite{Serre2004a} albeit in a different language and with a
different motivation. In his report Serre computed all critical values
of the characters of the adjoint representation of compact Lie
groups. However, the report does not contain proofs. In
Section~\ref{ssec:spectralbound-root} we provide proofs for the easy
cases $\mathsf{A}_n$ and $\mathsf{D}_n$. The cases $\mathsf{E}_6$,
$\mathsf{E}_7$, and $\mathsf{E}_8$ are much harder and we give Serre's
proof in Appendix~\ref{sec:serresproof} after recalling relevant facts
about compact Lie groups in Appendix~\ref{sec:liegroups}. We sketch an
alternative, computational proof, which is based on optimization, in
particular using sum of squares for the cases $\mathsf{E}_7$ and
$\mathsf{E}_8$ at the end of
Section~\ref{ssec:spectralbound-root}. The case $\mathsf{E}_6$ is
easier and does not require computer assistance.

Then, in Section~\ref{sec:coloring-root}, we construct several efficient
colorings of irreducible root lattices.

It would be nice to know the chromatic number of more important
lattices. Following the book \cite{Conway1988a} by Conway and Sloane
the next candidates one should consider are the $12$-dimensional
Coxeter-Todd lattice $\mathsf{K}_{12}$ and the $16$-dimensional
Barnes-Wall lattice $\mathsf{BW}_{16}$.  We expect that the spectral
lower bound gives a close approximation to the chromatic number.

We show in Section~\ref{ssec:SpherePackingLeechE8} that the chromatic
number of the Leech lattice $\Lambda_{24}$ in $24$ dimensions is
$4096$. This is a consequence of the sphere packing optimality of
$\Lambda_{24}$. It would be nice to have an independent (spectral)
proof of this fact.

Going back to general lattices: At the moment we do not know whether
there is a finite algorithm to compute the chromatic number of a
lattice which is given for example by a basis. Determining the strict
Voronoi vectors---and thus the Cayley graph
$\Cayley(\Lambda, \Vor(\Lambda))$ and the Voronoi cell
$V(\Lambda)$---is possible by a finite algorithm, see for example
\cite{Dutour2009a}, although it can occupy exponential space (and
therefore needs exponential time). The theorem of de Bruijn and
Erd\H{os} \cite{deBruijn1951a} implies that the chromatic number of
$\Lambda$ is equal to the largest chromatic number of all finite
subgraphs of $\Lambda$. This shows that the decision problem: ``Is
$\chi(\Lambda) \leq k$?'' is at least semidecidable.

\subsection{Generic and extremal behaviour of the chromatic number}

\textit{What is $\chi(\Lambda)$ of a random $n$-dimensional lattice?
  How fast can $\chi(\Lambda)$ grow depending on the dimension $n$?}

In Section~\ref{ssec:asymptotic} we prove that the chromatic
number of a generic $n$-dimensional lattice grows exponentially with
the dimension. There we show that there are $n$-dimensional
lattices $\Lambda_n$ with
\[
\chi(\Lambda_n) \geq 2 \cdot 2^{(0.0990\ldots - o(1))n}.
\]
It would be very interesting to understand the extremal behaviour.

\section{Background and first observations}

\subsection{Lattices, Voronoi cells, Voronoi vectors}
\label{ssec:lattices-voronoi-cells-voronoi-vectors}

A \textit{lattice} $\Lambda$ is a discrete free $\mathbb{Z}$-module in
an $n$-dimensional Euclidean space. If its rank is strictly lower than
$n$, then $\Lambda$ also defines a lattice in its linear span over
$\mathbb{R}$. We implicitly identify these two lattices, and assume
for the following definitions that $\Lambda$ is a full-rank lattice in
$\mathbb{R}^n$. We denote by $\Lambda^*$ the \textit{dual lattice} of
$\Lambda$:
\[
\Lambda^* =\{ x \in \mathbb{R}^n:  x \cdot y  \in \mathbb{Z}  \text{
  for all $y\in \Lambda$} \},
\]
where $x \cdot y$ denotes the standard Euclidean scalar product
between $x$ and $y$.  A \textit{fundamental region} of $\Lambda$ is a
region $\mathcal{R}\subset \mathbb{R}^n$ such that for any
$u \neq v \in \Lambda$, the volume of
$(u + \mathcal{R}) \cap (v + \mathcal{R})$ is $0$, and
$\mathbb{R}^n = \bigcup_{v \in \Lambda} (v + \mathcal{R})$.  The
\textit{volume} $\vol(\mathbb{R}^n / \Lambda)$ of $\Lambda$ is defined
as the volume of any of its fundamental region.  A fundamental region
of particular interest is the \textit{Voronoi cell} of $\Lambda$:
\[
V(\Lambda) = \{x \in \mathbb{R}^n : \|x\| \leq \|x - v\| \text{ for
  all } v \in \Lambda \}.
\]
A vector $u \in \Lambda \setminus\{0\}$ is called a \emph{strict
  Voronoi vector}, or sometimes a ``relevant'' vector, if the
intersection~$(u + V(\Lambda)) \cap V(\Lambda)$ is a facet, a face of
dimension~\hbox{$n-1$}, of $V(\Lambda)$. By a well-known
characterization of Voronoi (see for example \cite[Chapter~21,
Theorem~10]{Conway1988a} or \cite{Conway1997a}) the set of these
vectors is
\begin{equation}
\label{eq:Voronoi-characterization}
\Vor(\Lambda) = \{u \in \Lambda \setminus\{0\}: \pm u \text{ only shortest vectors
  in } u + 2\Lambda\}.
\end{equation}
Now the \textit{chromatic number} of $\Lambda$ equals the chromatic number of
the Cayley graph on the additive group $\Lambda$ with generating set
$\Vor(\Lambda)$:
\[
\chi(\Lambda) = \chi(\Cayley(\Lambda, \Vor(\Lambda))).
\]
Here, the set of vertices of the Cayley graph are all elements of
$\Lambda$ and two vertices $v, w$ are adjacent whenever the difference
$v-w$ lies in the set of strict Voronoi vectors $\Vor(\Lambda)$.

\subsection{Simple upper bounds for the chromatic number}
\label{ssec:easy-upper-bounds}

One can color a lattice $\Lambda$ periodically by using translates of
one of its sublattices $\Lambda'$ which does not contain Voronoi
vectors. More precisely it is enough to color the vertices of the
graph $G = (V, E)$ with
\[
V = \Lambda/\Lambda' \quad \text{and} \quad
E = \{ \{v + \Lambda',w + \Lambda'\} : v-w+u \in \Vor(\Lambda) \text{ for some } u \in \Lambda' \}.
\]
This immediately gives the following upper bound on $\chi(\Lambda)$:
\begin{lemma} \label{lem:upperboundquotient}
  Let $\Lambda' \subset \Lambda$ be a sublattice of $\Lambda$ with
  $\Lambda' \cap \Vor(\Lambda) = \emptyset$. Then,
  $\chi(\Lambda)$ is at most $|\Lambda / \Lambda'|$.
\end{lemma}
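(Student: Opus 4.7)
The plan is to produce an explicit periodic coloring of the Cayley graph $\Cayley(\Lambda,\Vor(\Lambda))$ whose palette has exactly $|\Lambda/\Lambda'|$ colors. The natural candidate is the coset coloring $c\colon\Lambda\to\Lambda/\Lambda'$ sending $v$ to $v+\Lambda'$; this is well-defined on vertices and by construction uses $|\Lambda/\Lambda'|$ colors in total.

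The substantive step is to check that $c$ is proper, i.e.\ that $c(v)\neq c(w)$ whenever $v$ and $w$ are adjacent in $\Cayley(\Lambda,\Vor(\Lambda))$. I would argue by contradiction: adjacency means $v-w\in\Vor(\Lambda)$, while $c(v)=c(w)$ would mean $v-w\in\Lambda'$. Combining the two would yield a nonzero element of $\Lambda'\cap\Vor(\Lambda)$, contradicting the hypothesis that this intersection is empty. Hence $c$ is proper and $\chi(\Lambda)\leq|\Lambda/\Lambda'|$.

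There is no real obstacle here: the lemma essentially formalizes the observation that precedes it, namely that the graph $G=(V,E)$ displayed above is the quotient of $\Cayley(\Lambda,\Vor(\Lambda))$ by the translation action of $\Lambda'$, and the hypothesis $\Lambda' \cap \Vor(\Lambda)=\emptyset$ is precisely what guarantees this quotient has no loops. Any proper coloring of $G$ lifts to a proper coloring of the Cayley graph, and even the wasteful coloring that gives every coset its own color already uses only $|V|=|\Lambda/\Lambda'|$ colors, which is enough to conclude. (One could sharpen the bound to $\chi(G)$, but the simple cardinality bound is the desired statement.)
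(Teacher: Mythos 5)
Your coset coloring $c\colon v\mapsto v+\Lambda'$ is exactly the periodic coloring the paper has in mind, and your check that properness follows from $\Lambda'\cap\Vor(\Lambda)=\emptyset$ is the same observation the paper leaves implicit when it says the lemma follows "immediately" from the quotient graph construction. Correct, and essentially the paper's argument.
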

Sometimes, we can improve this bound by
coloring the vertices of the graph $G = (V, E)$ greedily.
This shows, see for example \cite[Chapter V.1]{Bollobas1998a},
that
\begin{equation}\label{eq:upperbounddegree}
\chi(\Lambda) \leq \chi(G) \leq \Delta(G) + 1,
\end{equation}
where $\Delta(G)$ is the largest degree of a vertex in $G$.

Now we take $\Lambda' = 2\Lambda$. Lemma~\ref{lem:upperboundquotient}
implies that $\chi(\Lambda) \leq 2^n$. If the number of Voronoi
vectors is not maximal, if $|\Vor(\Lambda)| < 2(2^n-1)$, then we can
improve this bound by using \eqref{eq:upperbounddegree}:
\begin{lemma}
\label{lem:upperbound-voronoi-vectors}
The chromatic number of $\Lambda$ is at most $|\Vor(\Lambda)|/2+1$.
\end{lemma}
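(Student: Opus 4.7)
The plan is to specialize Lemma~\ref{lem:upperboundquotient} to the sublattice $\Lambda' = 2\Lambda$ and then sharpen the resulting bound $|\Lambda/\Lambda'| = 2^n$ by the greedy inequality~\eqref{eq:upperbounddegree}. For this to be a real improvement one needs to compute $\Delta(G)$ exactly, where $G$ is the finite graph on $\Lambda/2\Lambda$ whose edges are defined by the existence of a Voronoi vector representative. The key observation is that in $\Lambda/2\Lambda$ the vectors $u$ and $-u$ become identified, so the $|\Vor(\Lambda)|$ edges incident to a vertex of the Cayley graph collapse pairwise when projected to the quotient.

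First I would verify that $\Lambda' = 2\Lambda$ satisfies the hypothesis $\Lambda' \cap \Vor(\Lambda) = \emptyset$. Indeed, if $u \in \Vor(\Lambda) \cap 2\Lambda$, then the coset $u + 2\Lambda$ equals $2\Lambda$ and therefore contains $0$, which is strictly shorter than $u \neq 0$; this contradicts the Voronoi characterization~\eqref{eq:Voronoi-characterization}, according to which $\pm u$ are the (unique) shortest vectors in their coset modulo $2\Lambda$. With this in hand, Lemma~\ref{lem:upperboundquotient} yields $\chi(\Lambda) \leq \chi(G)$.

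Next I would compute the degree of a vertex of $G$. Neighbors of $v + 2\Lambda$ are the cosets $v + u + 2\Lambda$ as $u$ ranges over $\Vor(\Lambda)$, so the degree equals the number of distinct cosets in $\Vor(\Lambda)/\!\!\sim$ where $u_1 \sim u_2$ iff $u_1 \equiv u_2 \pmod{2\Lambda}$. Since $u \equiv -u \pmod{2\Lambda}$ for every $u \in \Lambda$, each such coset contains at least the pair $\{u, -u\}$. Conversely, by~\eqref{eq:Voronoi-characterization}, any Voronoi vector $u'$ in the same coset as $u$ must be one of the two shortest vectors of $u + 2\Lambda$, hence $u' \in \{u, -u\}$. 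Thus each occupied coset contains exactly two Voronoi vectors, giving
\[
\Delta(G) = \frac{|\Vor(\Lambda)|}{2}.
\]

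Combining with~\eqref{eq:upperbounddegree} gives $\chi(\Lambda) \leq \chi(G) \leq \Delta(G) + 1 = |\Vor(\Lambda)|/2 + 1$. There is no substantial obstacle: the only point requiring care is the second direction of the degree count, where one must invoke the uniqueness clause of~\eqref{eq:Voronoi-characterization} to rule out spurious Voronoi vectors in the coset $u + 2\Lambda$ beyond the pair $\pm u$.
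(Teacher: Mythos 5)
Your proposal is correct and follows exactly the argument the paper sets up in the two paragraphs preceding the lemma: take $\Lambda' = 2\Lambda$, pass to the quotient graph $G$ on $\Lambda/2\Lambda$, and apply the greedy bound \eqref{eq:upperbounddegree}. Your careful use of the Voronoi characterization \eqref{eq:Voronoi-characterization} to show that each occupied coset of $2\Lambda$ contains exactly the pair $\{u,-u\}$ — and hence $\Delta(G) = |\Vor(\Lambda)|/2$ — is precisely the degree count the paper leaves implicit.
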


For generic lattices, the number of Voronoi vectors is $2(2^n-1)$, so
that Lemma~\ref{lem:upperbound-voronoi-vectors} also gives an upper
bound of $2^n$ for the chromatic number of $\Lambda$.

\subsection{Simple lower bounds for the chromatic number}
\label{ssec:easy-lower-bounds}

For a general graph $G$ one has $\chi(G) \geq \chi(H)$ for every
induced subgraph $H$ of $G$. In particular, when choosing $H$ to be a
largest complete subgraph of $G$, we have $\chi(G) \geq \omega(G)$,
where $\omega(G)$ is the clique number of $G$.

Canonical finite induced subgraphs of
$\Cayley(\Lambda, \Vor(\Lambda))$ are the vertex-edge graphs of
Delaunay polytopes of $\Lambda$. A \emph{Delaunay polytope} of the
lattice $\Lambda$ is defined as follows: Let $x$ be a vertex of the
Voronoi cell $V(\Lambda)$. Consider all vectors
$v_1, \ldots, v_m \in \Lambda$ so that $x$ is contained in all the
translates $v_1 + V(\Lambda), \ldots, v_m + V(\Lambda)$. Then, the
convex hull $P = \conv\{v_1, \ldots, v_m\}$ of $v_1, \ldots, v_m$ is a
Delaunay polytope of~$\Lambda$. Clearly, all edges of $P$ lie in
$\Vor(\Lambda)$.

\begin{lemma}
The chromatic number of a lattice $\Lambda$ is at least the chromatic
number of the vertex-edge graph of any Delaunay polytope of $\Lambda$.
\end{lemma}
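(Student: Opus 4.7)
The plan is to exhibit the vertex-edge graph $G_P$ of the Delaunay polytope $P = \conv\{v_1, \ldots, v_m\}$ as a subgraph of the Cayley graph $\Cayley(\Lambda, \Vor(\Lambda))$, and then conclude via the elementary monotonicity of the chromatic number under taking subgraphs: for any subgraph $H$ of a graph $G$ one has $\chi(H) \leq \chi(G)$, since every proper coloring of $G$ restricts to a proper coloring of $H$.

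The only content of the statement is therefore the subgraph claim itself, which amounts to the assertion already flagged in the excerpt: every edge of $P$ lies in $\Vor(\Lambda)$. I would justify this via Voronoi--Delaunay duality. Concretely, if $[v_i, v_j]$ is an edge of $P$, then by definition of a Delaunay polytope there is a vertex $x$ of $V(\Lambda)$ such that $x \in v_k + V(\Lambda)$ for all $k$, and the face of $P$ containing $v_i, v_j$ corresponds (under the lattice-type / tiling duality) to an $(n-1)$-dimensional face of the Voronoi tessellation, i.e.\ to a facet of $V(\Lambda)$ shared with one of its translates. The two cells meeting along this facet are precisely $v_i + V(\Lambda)$ and $v_j + V(\Lambda)$, so by the very definition recalled in Section~\ref{ssec:lattices-voronoi-cells-voronoi-vectors} the difference $v_i - v_j$ lies in $\Vor(\Lambda)$.

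Combining the two steps: each edge $\{v_i, v_j\}$ of $G_P$ satisfies $v_i - v_j \in \Vor(\Lambda)$, so $\{v_i, v_j\}$ is also an edge of $\Cayley(\Lambda, \Vor(\Lambda))$; therefore $G_P$ embeds as a subgraph (not necessarily induced, since two non-adjacent vertices of $P$ may still differ by a Voronoi vector) and the inequality
\[
\chi(G_P) \leq \chi(\Cayley(\Lambda, \Vor(\Lambda))) = \chi(\Lambda)
\]
follows. There is no real obstacle here; the statement is essentially a repackaging of the duality between Voronoi and Delaunay cells, and the only point that would deserve some care in a formal write-up is citing the duality cleanly (for instance via \cite{Conway1988a}) rather than re-deriving it from scratch.
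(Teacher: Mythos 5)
Your proof is correct and essentially identical to the paper's: the paper does not give a separate proof of this lemma, but immediately before it states ``Clearly, all edges of $P$ lie in $\Vor(\Lambda)$,'' which together with the earlier observation $\chi(G)\geq\chi(H)$ for any subgraph $H$ of $G$ is exactly the two-step argument you carry out. Your extra remark about the subgraph possibly not being induced is harmless (it is not needed for the chromatic bound), though you could also note that the paper in fact asserts it is an induced subgraph.
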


\subsection{Root lattices and their duals}
\label{ssec:rootlattices}

One of the most important classes of lattices are the root lattices.
Assume that $\Lambda \subseteq \mathbb{R}^n$ is an even lattice,
i.e.~we have $v \cdot v \in 2\mathbb{Z}$ for all $v \in \Lambda$.
Lattice vectors $v \in \Lambda$ with $v \cdot v = 2$ are called
\emph{root vectors}, or simply \emph{roots}. A \emph{root lattice}
$\Lambda \subseteq \mathbb{R}^n$ is an even lattice which is spanned
by roots.  Root lattices have been classified by Witt in 1941, see for
example \cite[Section 1.4]{Ebeling1994a}, and they are orthogonal
direct sums of the irreducible root lattices $\mathsf{A}_n$,
$\mathsf{D}_n$, $\mathsf{E}_6$, $\mathsf{E}_7$, and $\mathsf{E}_8$.
The strict Voronoi vectors of root lattices are precisely the root
vectors~\cite[Chapter~21, \S3.A]{Conway1988a} (and in fact, this
condition that only the shortest nonzero vectors are relevant
characterizes root lattices, see \cite{RajanShende1996}).  Also, the
combinatorial description of the Voronoi cells of root lattices is
well known: it is described in more detail in~\cite{MoodyPatera1992}.
Here we recall the definitions of the irreducible root lattices and
their duals.

Living inside the hyperplane $\Pi := \{x \in \mathbb{R}^{n+1} :
\sum_{k=0}^n x_k = 0\}$ (the coordinates being here numbered $0$
through~$n$), the irreducible root lattice $\mathsf{A}_n$ is defined as
\[
\mathsf{A}_n = \left\{x \in \mathbb{Z}^{n+1} : \sum_{k=0}^n x_k = 0\right\}.
\]

The dual lattice $\mathsf{A}_n^*$ naturally lives in the vector space
$\Pi^*$ dual to~$\Pi$, which can be identified with the
quotient of $\mathbb{R}^{n+1}$ by the diagonal line $\{(t,\ldots,t) :
t\in\mathbb{R}\}$.  But we can identify $\Pi^*$ with $\Pi$ itself by
choosing the representatives $(x_0,\ldots,x_n)$ of $\mathbb{R}^{n+1}$
modulo the diagonal which belong to~$\Pi$.

For every $n \geq 4$, the irreducible root lattice $\mathsf{D}_n$ is defined as
\[
\mathsf{D}_n = \left\{ x \in \mathbb{Z}^n : \sum_{k=1}^n x_k \text{ is even}\right\}
\]
and its dual lattice $\mathsf{D}_n^*$ equals
\begin{equation*}
\mathsf{D}_n^* = \mathbb{Z}^n \cup \left((1/2, \ldots, 1/2) + \mathbb{Z}^n \right).
\end{equation*}

The root lattice $\mathsf{E}_8$ can be constructed as the union of two
translates of the lattice $\mathsf{D}_8$:
\begin{equation*}
\mathsf{E}_8 = \mathsf{D}_8 \cup ((1/2, \ldots, 1/2) +  \mathsf{D}_8).
\end{equation*}
The lattice $\mathsf{E}_8$ is \textit{unimodular},
i.e.~$\mathsf{E}_8^*=\mathsf{E}_8$.  The lattice $\mathsf{E}_7$
(resp. $\mathsf{E}_6$) can be defined as a $7$-dimensional
(resp. $6$-dimensional) sublattice of $\mathsf{E}_8$:
\[
\mathsf{E}_7 = \{ (x_1, \ldots, x_8) \in \mathsf{E}_8 : x_7=x_8 \}
\]
and
\[
\mathsf{E}_6 = \{ (x_1, \ldots, x_8) \in \mathsf{E}_8 : x_6=x_7=x_8 \}.
\]
Then, if we define
\[
u = \frac{1}{4} (1,1,1,1,1,1,-3,-3) \quad \text{and} \quad v =
\frac{1}{3}(0,-2,-2,1,1,1,1,0),
\]
the dual lattices of $\mathsf{E}_7$ and $\mathsf{E}_6$ are
\[
\mathsf{E}_7^* = \mathsf{E}_7 \cup (u + \mathsf{E}_7) \quad \text{and} \quad
\mathsf{E}_6^* = \mathsf{E}_6 \cup (v + \mathsf{E}_6) \cup (-v +
\mathsf{E}_6).
\]

\subsection{The chromatic number of an orthogonal sum of lattices}

Eichler \cite{Eichler1952a} showed that one can decompose every
lattice as a pairwise orthogonal sum of indecomposable lattices and
that this decomposition is unique up to permutation of the summand;
Kneser \cite{Kneser1954a} gave a constructive and much simpler proof
of Eichler's result.

We prove that the chromatic number of a lattice is the maximum of the
chromatic numbers of its orthogonal summands.

This reduces in particular the study of the chromatic number of root
lattices to the irreducible root lattices $\mathsf{A}_n$,
$\mathsf{D}_n$, $\mathsf{E}_6$, $\mathsf{E}_7$, and $\mathsf{E}_8$.

\begin{lemma}
\label{lem:VorVecSum}
Let $\Lambda \subseteq \mathbb{R}^n$ be a lattice which can be written
as the orthogonal direct sum of lattices $\Lambda_1, \ldots, \Lambda_m
\subseteq \mathbb{R}^n$:
\[
\Lambda = \Lambda_1 \perp \Lambda_2 \perp \ldots \perp \Lambda_m,
\quad \text{with } m \in \mathbb{N},
\]
so that every lattice vector $v \in \Lambda$ can be uniquely
decomposed as $v = v_1 + \cdots + v_m$ with $v_i \in \Lambda_i$ and
$v_i$ is orthogonal to $v_j$ whenever $i \neq j$. Then,
\[
\Vor(\Lambda) = \bigcup_{i=1}^m \Vor(\Lambda_i).
\]
\end{lemma}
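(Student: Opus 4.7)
The plan is to apply the characterization (\ref{eq:Voronoi-characterization}): a nonzero $u \in \Lambda$ lies in $\Vor(\Lambda)$ if and only if $\pm u$ are the only shortest vectors in the coset $u + 2\Lambda$. First I would record the structural observation that underlies everything: since $\Lambda = \Lambda_1 \perp \cdots \perp \Lambda_m$ and scaling preserves orthogonal decomposition, we have $2\Lambda = 2\Lambda_1 \perp \cdots \perp 2\Lambda_m$, and hence any coset $u + 2\Lambda$ splits as $(u_1 + 2\Lambda_1) + \cdots + (u_m + 2\Lambda_m)$, where $u = u_1 + \cdots + u_m$ is the orthogonal decomposition of $u$. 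By Pythagoras, $\|w_1 + \cdots + w_m\|^2 = \sum_i \|w_i\|^2$, so a vector $w_1 + \cdots + w_m$ in $u + 2\Lambda$ is a shortest vector exactly when each $w_i$ is a shortest vector in its own coset $u_i + 2\Lambda_i$. Thus the shortest vectors in $u + 2\Lambda$ are precisely all sums $s_1 + \cdots + s_m$ with $s_i$ shortest in $u_i + 2\Lambda_i$.

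For the inclusion $\bigcup_i \Vor(\Lambda_i) \subseteq \Vor(\Lambda)$, take $u \in \Vor(\Lambda_i)$, so that $u \in \Lambda_i \subseteq \Lambda$ has $u_i = u$ and $u_j = 0$ for $j \neq i$. For $j \neq i$ the unique shortest vector in the coset $2\Lambda_j$ is $0$, while by hypothesis the shortest vectors in $u_i + 2\Lambda_i$ are exactly $\pm u_i$. Combining these via the structural observation, the shortest vectors in $u + 2\Lambda$ are exactly $\pm u$, so $u \in \Vor(\Lambda)$.

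For the reverse inclusion, suppose $u \in \Vor(\Lambda)$ with decomposition $u = u_1 + \cdots + u_m$. Then $u$ must be a shortest vector in its coset, so each $u_i$ is a shortest vector in $u_i + 2\Lambda_i$. The key step is to rule out that two components, say $u_i$ and $u_j$, are both nonzero. In that case $-u_i$ lies in the same coset as $u_i$ (since $-u_i = u_i - 2u_i$ with $u_i \in \Lambda_i$) and has the same norm, so the vector obtained by flipping only the $i$-th component, namely $u_1 + \cdots - u_i + \cdots + u_m$, is another shortest vector in $u + 2\Lambda$; because $u_j \neq 0$ this flipped vector is neither $u$ nor $-u$, contradicting $u \in \Vor(\Lambda)$. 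Hence a single component $u_i$ is nonzero, $u = u_i \in \Lambda_i$, and the shortest vectors of $u + 2\Lambda$ are $\pm u$, which forces $u \in \Vor(\Lambda_i)$.

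The argument is entirely structural and there is no real obstacle; the only point requiring care is the sign-flip step, where one must check that flipping a single nonzero component genuinely produces a vector distinct from both $u$ and $-u$, which is where the assumption that a \emph{second} component $u_j$ is also nonzero is used.
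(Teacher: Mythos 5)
Your proposal is correct and takes essentially the same approach as the paper: both use Voronoi's characterization \eqref{eq:Voronoi-characterization}, and the key step in both is the sign-flip trick (the paper, working with $m=2$ by induction, considers $w = v_1 - v_2$; you flip the sign of the $i$-th component of a general $m$-fold decomposition). Your proof merely organizes things a bit more systematically by recording the ``structural observation'' up front — that shortest vectors in $u + 2\Lambda$ are exactly the componentwise-shortest vectors, by Pythagoras — and then derives both inclusions from it, whereas the paper verifies the two inclusions separately with slightly more hands-on norm estimates; the underlying ideas are identical.
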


\begin{proof}
  By induction, we may assume that
  $\Lambda = \Lambda_1 \perp \Lambda_2$.

  Every $v\in \Vor(\Lambda)$ we can write as $v=v_1+v_2$ with
  $v_i\in V_i$. If both $v_1$ and $v_2$ are non zero, then
  $w = v_1 - v_2$ is different from $\pm v$. It lies in $v+2\Lambda$
  and satisfies $\Vert w \Vert = \Vert v \Vert $, yielding a
  contradiction.  So $v=v_i$ for $i \in \{1,2\}$. In particular,
  $\pm v_i$ must be the only minimal vectors in $v_i + 2\Lambda_i$, so
  that $v\in \Vor(\Lambda_i)$.

  Conversely, let for instance $v_1\in \Vor(\Lambda_1)$, and let
  $w\in v_1+2\Lambda$. Let us write
  $w=v_1+2(u_1+u_2)= (v_1+2u_1)+2 u_2$ for $u_i \in \Lambda_i$. Since
  $v_1 \in \Vor(\Lambda_1)$, we have
  $\Vert v_1+2u_1 \Vert \geq \Vert v_1 \Vert$, with equality if and
  only if $v_1+2u_1=\pm v_1$. Thus,
\[
\Vert w \Vert^2 = \Vert v_1+2u_1 \Vert^2 +  \Vert 2u_2 \Vert^2 \geq
\Vert v_1+2u_1 \Vert^2 \geq \Vert v_1 \Vert^2
\]
with equality if and only if $u_2=0$ and $v_1+2u_1=\pm v_1$, namely
$w=\pm v_1$. So $v_1\in \Vor(\Lambda)$.
\end{proof}

\begin{theorem}\label{thm:OrthogonalSum}
Let $\Lambda$ be a lattice such that
\[
\Lambda = \Lambda_1 \perp \Lambda_2 \perp \ldots \perp \Lambda_m,
\quad \text{with } m \in \mathbb{N}.
\]
Then,
\[
\chi(\Lambda) = \max_{i\in \{ 1,\ldots,m \}} \chi(\Lambda_i).
\]
\end{theorem}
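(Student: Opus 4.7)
The plan is to prove the two inequalities $\chi(\Lambda) \geq \max_i \chi(\Lambda_i)$ and $\chi(\Lambda) \leq \max_i \chi(\Lambda_i)$ separately, and in both directions the workhorse is Lemma~\ref{lem:VorVecSum}, which identifies $\Vor(\Lambda)$ with the disjoint union of the $\Vor(\Lambda_i)$.

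For the lower bound, I would observe that each summand $\Lambda_i$ sits inside $\Lambda$ as a subgroup, and the restriction of the Cayley graph $\Cayley(\Lambda, \Vor(\Lambda))$ to the vertex set $\Lambda_i$ coincides with $\Cayley(\Lambda_i, \Vor(\Lambda_i))$. Indeed, if $v, w \in \Lambda_i$ are adjacent in the ambient graph then $v - w \in \Lambda_i \cap \Vor(\Lambda)$, and by Lemma~\ref{lem:VorVecSum} this intersection equals $\Vor(\Lambda_i)$ (the Voronoi vectors of a different summand $\Lambda_j$ are nonzero elements of $\Lambda_j$ and so cannot lie in $\Lambda_i$). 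Since an induced subgraph cannot have larger chromatic number than the ambient graph, $\chi(\Lambda) \geq \chi(\Lambda_i)$ for each $i$.

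For the upper bound, set $k = \max_i \chi(\Lambda_i)$ and choose, for every $i$, a proper coloring $c_i \colon \Lambda_i \to \mathbb{Z}/k\mathbb{Z}$ of $\Cayley(\Lambda_i, \Vor(\Lambda_i))$; this is possible because $\chi(\Lambda_i) \leq k$. Writing any $v \in \Lambda$ uniquely as $v = v_1 + \cdots + v_m$ with $v_i \in \Lambda_i$, define
\[
c(v) = \sum_{i=1}^m c_i(v_i) \pmod{k}.
\]
To check that $c$ is a proper coloring, suppose $v, w \in \Lambda$ are adjacent, so $v - w \in \Vor(\Lambda)$. By Lemma~\ref{lem:VorVecSum}, $v - w \in \Vor(\Lambda_i)$ for exactly one index $i$, which forces $v_j = w_j$ for all $j \neq i$ and $v_i - w_i \in \Vor(\Lambda_i)$. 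Hence
\[
c(v) - c(w) \equiv c_i(v_i) - c_i(w_i) \pmod{k},
\]
and the right-hand side is nonzero in $\mathbb{Z}/k\mathbb{Z}$ because $c_i$ is a proper coloring. Therefore $c(v) \neq c(w)$, showing $\chi(\Lambda) \leq k$.

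There is no real obstacle beyond unpacking the definitions: the lower bound is immediate once one identifies the induced subgraph on $\Lambda_i$, while the upper bound relies on the key observation that two $\Lambda$-Voronoi-adjacent vectors differ only in a single summand (a direct consequence of Lemma~\ref{lem:VorVecSum}), so an additive superposition of colorings modulo $k$ assembles the summand-colorings into a global coloring without needing more than $k$ colors.
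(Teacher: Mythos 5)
Your proof is correct and follows essentially the same route as the paper's: both directions rest on Lemma~\ref{lem:VorVecSum}, and the upper bound uses the identical additive-superposition coloring $c(v) = \sum_i c_i(v_i) \bmod k$. The only (minor) cosmetic difference is in the lower bound: the paper views $\Cayley(\Lambda, \Vor(\Lambda_i))$ as a spanning subgraph of $\Cayley(\Lambda, \Vor(\Lambda))$, whereas you restrict to the induced subgraph on the vertex set $\Lambda_i$ and check it coincides with $\Cayley(\Lambda_i, \Vor(\Lambda_i))$; your version is arguably a touch more explicit about why the chromatic number of the subgraph equals $\chi(\Lambda_i)$, but the two arguments are equivalent in substance.
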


\begin{proof}
We again assume $\Lambda=\Lambda_1 \perp \Lambda_2$.

By Lemma~\ref{lem:VorVecSum},
$\Vor(\Lambda) = \Vor(\Lambda_1)\cup \Vor(\Lambda_2)$, and so
$\Cayley(\Lambda, \Vor(\Lambda_i))$ is a subgraph of
$\Cayley(\Lambda, \Vor(\Lambda))$. Hence,
$\chi(\Lambda)\geq\max\{\chi(\Lambda_1),\chi(\Lambda_2)\}$.

Conversely, let $k = \max \{ \chi(\Lambda_1), \chi(\Lambda_2) \}$. By
definition, for $i\in \{ 1,2 \}$, there is a proper coloring
$c_i : \Lambda_i \to \mathbb{Z}/k \mathbb{Z}$ such
that if $v_i - v'_i \in \Vor(\Lambda_i)$, then
$c_i(v_i)\neq c_i(v'_i) $. We shall show that
\[
\fonc{c}{\Lambda}{\mathbb{Z}/k\mathbb{Z}}{v_1+v_2}{c_1(v_1) + c_2(v_2)
  \bmod k}
\]
is a proper coloring of $\Lambda$. For this let $u, v \in \Lambda$ such
that $v = u+w$ with $w\in \Vor(\Lambda)$. Following
Lemma~\ref{lem:VorVecSum}, $w\in \Vor(\Lambda_1)\cup \Vor(\Lambda_2)$.
Assume for instance that $w=w_1 \in \Vor(\Lambda_1)$. Then, we write
$u = u_1 + u_2$ with $u_i \in \Lambda_i$, and
\[
c(v) = c_1(u_1+w_1) + c_2(u_2) \neq c_1(u_1) + c_2(u_2) = c(u) \bmod k.
\]
So $c$ is a proper coloring of $\Lambda$ and $\chi(\Lambda) \leq k$.
\end{proof}

\section{On the chromatic number of lattices of Voronoi's first kind}
\label{sec:firstkind}

In this section we give lower and upper bounds for the chromatic
number of lattices of Voronoi's first kind. Lattices of Voronoi's
first kind form a nice class of lattices: All lattices in dimensions
$2$ and $3$ belong to this class as well as $\mathsf{A}_n$ and
$\mathsf{A}_n^*$. Our lower and upper bounds coincide for all these
cases. For dimension $4$ and greater the bounds can differ. We like to pose
the question of computing the chromatic number of a lattice of
Voronoi's first kind as an open problem.

\subsection{Definitions and first examples}
\label{ssec:firstkinddefs}

Lattices of Voronoi's first kind are treated in detail for example
in~\cite{Conway1992a}. Here we start by collecting some facts about
them.

\begin{definition}
\label{def:firstkind}
A lattice $\Lambda$ is called a lattice \textit{of Voronoi's first
  kind} if it admits an \textit{obtuse superbasis}: There exist
lattice vectors $v_0,v_1,\ldots,v_n$ such that:
\begin{enumerate}
\item The set $\{ v_1,\ldots,v_n \}$ forms a basis of $\Lambda$,
\item We have $v_0+v_1+\cdots+v_n=0$,
\item For every $0\leq i < j \leq n$, the vectors $v_i$ and $v_j$
  enclose an obtuse angle, $v_i \cdot v_j \leq 0$.
\end{enumerate}
\end{definition}

The lattices $\mathsf{A}_n$ and $\mathsf{A}_n^*$ are of Voronoi's
first kind.  The lattice $\mathsf{A}_n$ possesses an obtuse
superbasis. Let $e_1,e_2, \ldots,e_{n+1}$ be the canonical basis of
$\mathbb{R}^{n+1}$. For $1\leq i \leq n$, let $v_i=e_i-e_{i+1}$, and
let $v_0=e_{n+1}-e_1$. Then $\{v_0,\ldots,v_n\}$ is an obtuse
superbasis of $\mathsf{A}_n$. The dual lattice $\mathsf{A}_n^*$ possesses a
\emph{strictly} obtuse superbasis. Let
\[
v_0=\left(-\frac{n}{n+1},\frac{1}{n+1},\ldots,\frac{1}{n+1}\right),\ldots,
v_n=\left(\frac{1}{n+1},\ldots,\frac{1}{n+1},-\frac{n}{n+1}\right) .
\]
Then $\{v_0,\ldots,v_n\}$ is an obtuse superbasis:
\[
v_i \cdot v_j = -1 \; \text{ for every } \;
0\leq i < j \leq n.
\]

If $\Lambda$ is a lattice of Voronoi's first kind, then it is known
(see \cite{Conway1992a}) that
\[
\Vor(\Lambda) \subseteq \left\{  v_I = \sum_{i\in I} v_i : I \subseteq \{0,
\ldots, n\}, I \neq \emptyset, I \neq \{0, \ldots, n\} \right\}.
\]

This immediately gives an upper bound for the chromatic number.

\begin{lemma}
\label{lem:VorFirstKindUpperBound}
Let $\Lambda$ be a lattice of Voronoi's first kind, with an obtuse
superbasis $\{v_0,\ldots,v_n\}$. Then, $\chi(\Lambda)$ is at most $n+1$.
\end{lemma}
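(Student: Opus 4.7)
The plan is to construct an explicit homomorphism from $\Lambda$ to the cyclic group $\mathbb{Z}/(n+1)\mathbb{Z}$ that serves as a proper $(n+1)$-coloring. Since $\{v_1,\ldots,v_n\}$ is a basis of $\Lambda$, every $w\in\Lambda$ writes uniquely as $w=\sum_{i=1}^n x_i(w)\,v_i$ with integer coefficients, and I would simply set
\[
c(w) \;=\; \sum_{i=1}^n x_i(w) \pmod{n+1}.
\]
Because $c$ is a group homomorphism, two vertices $w,w'$ of $\Cayley(\Lambda,\Vor(\Lambda))$ receive the same color if and only if $c(w-w')=0$; so it suffices to check that $c$ is nonzero on every element of $\Vor(\Lambda)$.

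By the inclusion recalled just before the lemma, every $u\in\Vor(\Lambda)$ has the form $u=v_I=\sum_{i\in I}v_i$ for some $I\subseteq\{0,\ldots,n\}$ with $\emptyset\neq I\neq\{0,\ldots,n\}$. I would split the verification into two cases according to whether the index $0$ belongs to $I$. If $0\notin I$, then $v_I$ is already expressed in the basis $(v_1,\ldots,v_n)$ with coefficients in $\{0,1\}$, so $c(v_I)=|I|$, and this lies in $\{1,\ldots,n\}$, hence is nonzero modulo $n+1$. If $0\in I$, using the superbasis relation $v_0=-(v_1+\cdots+v_n)$ I rewrite
\[
v_I \;=\; -\!\!\sum_{i\in\{1,\ldots,n\}\setminus I}\!\! v_i,
\]
so $c(v_I)\equiv -(n+1-|I|)\pmod{n+1}$; since $1\leq |I|\leq n$ in this case (the lower bound because $0\in I$, the upper bound because $I\neq\{0,\ldots,n\}$), the integer $n+1-|I|$ again lies in $\{1,\ldots,n\}$ and $c(v_I)\not\equiv 0$.

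There is really no hard step here: the only subtlety is choosing the right coloring, and the choice is forced by the symmetric role played by $v_0,\ldots,v_n$ (the constant ``all ones'' weighting is the unique one, up to scaling by a unit in $\mathbb{Z}/(n+1)\mathbb{Z}$, that is invariant under replacing $I$ by $\{0,\ldots,n\}\setminus I$, which is exactly the symmetry $v_I\leftrightarrow -v_{\{0,\ldots,n\}\setminus I}$ governing the two cases above). Once this is observed, both cases collapse to the trivial fact that $1,2,\ldots,n$ are nonzero modulo $n+1$, and the coloring $c$ witnesses $\chi(\Lambda)\leq n+1$.
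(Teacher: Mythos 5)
Your proof is correct and follows essentially the same route as the paper's: the same linear coloring $c\colon \Lambda\to\mathbb{Z}/(n+1)\mathbb{Z}$ summing the coordinates in the basis $\{v_1,\ldots,v_n\}$, reduced by linearity to checking $c(v_I)\neq 0$, with the superbasis relation $v_0=-(v_1+\cdots+v_n)$ handling the case $0\in I$. The paper merely uses the symmetry $v_I\mapsto -v_{\{0,\ldots,n\}\setminus I}$ to collapse your two cases into one, but the argument is the same.
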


\begin{proof}
By definition $\{ v_1,\ldots,v_n \}$ is a basis of $\Lambda$. Let us
show that the linear map
\[
\fonc{c}{\Lambda}{\mathbb{Z}/(n+1)\mathbb{Z}}{\sum\limits_{i=1}^n x_i v_i}{\sum\limits_{i=1}^n x_i \bmod (n+1)}
\]
is a proper coloring of $\Lambda$. Because of linearity it is enough
to check that it does not vanish on the strict Voronoi vectors. Let
$v_I$ be such a vector. If $0 \in I$ we replace $v_I$ by
$-v_I = v_{\{0,\ldots, n\} \setminus I}$ (by
Definition~\ref{def:firstkind} (2)) to make sure that $0 \notin I$.
Since $I$ is nontrivial, $0<|I|<n+1$. In other words, $c(v_I)\neq 0$.
\end{proof}

With this lemma it is easy to see that the chromatic numbers of
$\mathsf{A}_n$ and of its dual $\mathsf{A}_n^*$ are both equal to
$n + 1$. For $\mathsf{A}_n$ and for every $1\leq i < j \leq n$, the
vector $v_{\{i,i+1,\ldots,j\}}=e_i-e_{j+1}$ is a minimal vector, and
thus is a strict Voronoi vector. So,
\begin{equation}
\label{eq:anclique}
\{ 0, v_{\{1\}}, v_{\{1,2\}}, \ldots, v_{\{1,\ldots,n\}} \}
\end{equation}
is a clique in $\Cayley(\mathsf{A}_n, \Vor(\mathsf{A}_n))$ and so
$\chi(\mathsf{A}_n) \geq n+1$. For $\mathsf{A}_n^*$ we know (see
\cite{Conway1992a}) that every $v_I$ is a strict Voronoi
vector. Again, \eqref{eq:anclique} is a clique, and
$\chi(\mathsf{A}_n^*) \geq n+1$.

\subsection{Interpretation in terms of graphs and more general results}

In order to get a better understanding of the chromatic number of
lattices of Voronoi's first kind, we need to know which vectors $v_I$
are strict Voronoi vectors.

Let $\Lambda$ be a lattice of Voronoi's first kind with superbasis
$\{v_0,\ldots,v_n\}$.

\begin{definition}
The \emph{Delaunay graph} $D(\Lambda,\{v_0,\ldots,v_n\})$
is an undirected graph with vertex set $\{0,\ldots,n\}$ and where $i$
and $j$ are connected by an edge whenever $v_i \cdot v_j <0$.
\end{definition}

The combinatorics of the Delaunay graph
$D(\Lambda,\{v_0,\ldots,v_n\})$ determines the Cayley graph
$\Cayley(\Lambda, \Vor(\Lambda))$. Recall some standard terminology in
graph theory. Let $G = (V, E)$ be a graph, a subset of the vertex set
$U \subseteq V$ defines a \textit{cut} by
\[
\delta(U) = \{e \in E : |e \cap U| = 1\}.
\]
The strict Voronoi vectors of $\Lambda$ are the $v_I$ such that
the cut $\delta(I)$ is minimal with respect to inclusion, see for
example \cite{Dutour2009a} or \cite{Vallentin2003a}.
The minimal cuts are also known to be the ones such that,
when removing the edges of the cut, the number of connected
components in the graph increases by one.

A connected graph $G = (V, E)$ is called \textit{biconnected (a
  block)} if it remains connected when we remove any of its
vertices. One can decompose every connected graph into biconnected
components (block decomposition). The set of edges $E$ can be uniquely
written as a disjoint union $E = \bigcup_{i=1}^m E_i$ such that the
subgraph $G_i$ of $G$ induced by $E_i$ is a maximal biconnected
subgraph of $G$. Let $G_i$, with $i = 1, \ldots, m$, be the
biconnected components of the Delaunay graph
$D(\Lambda,\{v_0,\ldots,v_n\})$. Then, there exist lattices
$\Lambda_i$ of Voronoi's first kind with obtuse superbasis
$\mathcal{B}_i$ such that
\[
\Lambda = \Lambda_1 \perp \Lambda_2 \perp \ldots \perp \Lambda_m
\quad \text{and} \quad D(\Lambda_i,\mathcal{B}_i) = G_i
\]
holds, see for example \cite[Chapter 4, Chapter 5]{Oxley2011}.  Then
Theorem~\ref{thm:OrthogonalSum} and
Lemma~\ref{lem:VorFirstKindUpperBound} yield the following upper bound
for the chromatic number of $\Lambda$.

\begin{corollary}
  Let $\Lambda$ be a lattice of Voronoi's first kind with obtuse
  superbasis $\{v_0,\ldots,v_n\}$. Let $G_i$, $1\leq i \leq m$, be the
  biconnected components of the Delaunay graph
  $D(\Lambda,\{v_0,\ldots,v_n\}) $. Then,
\[
\chi(\Lambda) \leq \max_{i=1,\ldots,m} |V(G_i)|+1
\]
\end{corollary}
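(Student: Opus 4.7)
The plan is to combine the cited block decomposition with Theorem~\ref{thm:OrthogonalSum} and Lemma~\ref{lem:VorFirstKindUpperBound}.

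By the structural statement quoted from \cite[Chapters~4--5]{Oxley2011} just before the corollary, each block $G_i$ of the Delaunay graph $D(\Lambda,\{v_0,\ldots,v_n\})$ yields a summand in an orthogonal decomposition $\Lambda = \Lambda_1 \perp \cdots \perp \Lambda_m$, where $\Lambda_i$ is itself a lattice of Voronoi's first kind and admits an obtuse superbasis $\mathcal{B}_i$ with $D(\Lambda_i,\mathcal{B}_i) = G_i$. In particular $\mathcal{B}_i$ is indexed by the vertex set $V(G_i)$, so $|\mathcal{B}_i| = |V(G_i)|$.

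I then apply Theorem~\ref{thm:OrthogonalSum} to obtain
\[
\chi(\Lambda) \;=\; \max_{i=1,\ldots,m} \chi(\Lambda_i),
\]
and Lemma~\ref{lem:VorFirstKindUpperBound} to each summand $\Lambda_i$ together with its obtuse superbasis $\mathcal{B}_i$, which yields
\[
\chi(\Lambda_i) \;\leq\; |\mathcal{B}_i| \;=\; |V(G_i)|.
\]
Combining these two relations gives the claimed inequality (in fact with a slightly sharper constant than stated).

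The only substantive ingredient is the matroid-theoretic statement that the block decomposition of the Delaunay graph lifts to an orthogonal decomposition of the lattice; this has already been imported from \cite{Oxley2011}. Given that, the corollary is just a one-line concatenation of the orthogonal-sum identity and the superbasis upper bound, so there is no real obstacle left.
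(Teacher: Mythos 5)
Your proof is correct and is exactly the argument the paper intends; the paper only sketches it by pointing to Theorem~\ref{thm:OrthogonalSum} and Lemma~\ref{lem:VorFirstKindUpperBound}, and you fill in the same steps. You are also right about the slightly sharper constant: since the vertex set of $D(\Lambda_i,\mathcal{B}_i)$ is by definition the index set of the superbasis $\mathcal{B}_i$, one has $|V(G_i)| = |\mathcal{B}_i|$, and Lemma~\ref{lem:VorFirstKindUpperBound} gives $\chi(\Lambda_i) \leq |\mathcal{B}_i| = |V(G_i)|$ directly, so the argument proves $\chi(\Lambda) \leq \max_i |V(G_i)|$ without the extra ``$+1$''. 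This sharper form is in fact what the paper implicitly uses in Theorem~\ref{thm:An-and-An-star}, where for $\mathsf{A}_n^*$ (whose Delaunay graph is $K_{n+1}$, a single block on $n+1$ vertices) the quoted upper bound is $n+1$, not $n+2$.
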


Now we go for lower bounds. Recall that a \textit{cycle} in a graph is
a collection of vertices $i_1, \ldots, i_l$ such that
$|\{i_1, \ldots, i_l\}| = l$ and such that $i_j$ is connected to
$i_{j+1}$, where indices are computed modulo $l$. Its length is equal
to $l$.

\begin{theorem}
  Let $\Lambda$ be a lattice of Voronoi's first kind with obtuse
  superbasis $\{v_0,\ldots,v_n\}$.  Then, the chromatic
  number of $\Lambda$ is at least the maximal length of a cycle in the
  Delaunay graph $D(\Lambda,\{v_0,\ldots,v_n\})$.
\end{theorem}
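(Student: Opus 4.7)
The goal is to show $\chi(\Lambda) \ge l$ by exhibiting a clique of cardinality $l$ in $\Cayley(\Lambda, \Vor(\Lambda))$, which immediately lower-bounds the chromatic number.

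We may assume the Delaunay graph $D := D(\Lambda, \{v_0, \ldots, v_n\})$ is biconnected, since every cycle lies inside a single biconnected block and Theorem~\ref{thm:OrthogonalSum} reduces the chromatic number to the maximum over blocks; in that case every singleton cut is minimal, so each $v_i$ is already a strict Voronoi vector. Fix a longest cycle $C = (i_1, \ldots, i_l)$ in $D$, and consider the partial sums
\[
w_k = v_{i_1} + v_{i_2} + \cdots + v_{i_k}, \quad k=0,1,\ldots,l-1.
\]
For $0\le j<k\le l-1$, the pairwise difference $w_k-w_j$ is exactly $v_I$ with $I=\{i_{j+1},\ldots,i_k\}$ a proper arc of $C$, and by the characterization recalled in Section~\ref{ssec:firstkinddefs} this is a strict Voronoi vector iff the cut $\delta(I)$ is minimal, iff both $D[I]$ and $D[I^c]$ are connected.

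The subgraph $D[I]$ is a sub-path of $C$ and is automatically connected. For $D[I^c]$, which contains the complementary arc $C\setminus I$ together with all vertices of $V(D)\setminus C$, one must check that every off-cycle $u$ has a neighbor in $I^c$ (or is at least linked to $I^c$ through off-cycle intermediaries). This is the main technical content, and it is here that the extremality of $C$ is essential: by the fan lemma in the $2$-connected graph $D$, any such $u$ admits two internally vertex-disjoint paths to $V(C)$ with endpoints $c_1,c_2$ and all internal vertices in $V(D)\setminus V(C)$; if $c_1$ and $c_2$ were both in $I$ and adjacent on $C$, splicing these paths in place of the edge $c_1c_2$ would yield a cycle of length $>l$, contradicting maximality of $C$. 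A splicing/augmentation argument---choosing the cyclic starting point of $C$ carefully and, when necessary, absorbing obstructive off-cycle vertices $u$ into the sets $I_k = \{i_1,\ldots,i_k\}$ by replacing $w_k$ with $w_k + v_u$ so that the arc-difference cuts become minimal---produces a sequence whose pairwise differences all lie in $\Vor(\Lambda)$.

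The principal obstacle is showing that such augmentations can be performed consistently for all $\binom{l}{2}$ index pairs simultaneously; this is where the longest-cycle hypothesis and the classical P\'osa-type fact (the cycle neighbors of any off-cycle vertex in a longest cycle of a $2$-connected graph are pairwise non-adjacent on the cycle) enter to control the interactions between augmentations. Once this combinatorial argument is carried out, $\{w_0,\ldots,w_{l-1}\}$ forms an $l$-clique in $\Cayley(\Lambda, \Vor(\Lambda))$, and hence $\chi(\Lambda)\ge l$.
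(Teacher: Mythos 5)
Your proof is not complete: you identify the crux of the matter --- namely that for an arc $I$ of the cycle, the complement $I^c$ (which contains all off-cycle vertices) may fail to induce a connected subgraph, so that $v_I$ need not be a strict Voronoi vector --- but you only sketch a strategy for fixing it (``absorbing obstructive off-cycle vertices $u$ into the sets $I_k$'') and you explicitly defer the hard part: ``Once this combinatorial argument is carried out\ldots''.  As written this is not a proof but a proof plan, and the plan itself is questionable.  The P\'osa-type fact you invoke (cycle neighbors of an off-cycle vertex in a longest cycle are pairwise non-adjacent on the cycle) only rules out $c_1,c_2$ being \emph{adjacent} on $C$; it does not prevent an off-cycle vertex $u$ from having all its routes to $C$ landing inside the arc $I$ at non-adjacent positions, in which case $u$ is still stranded from $C\setminus I$ inside $I^c$, and the splicing you describe need not produce a longer cycle (the detour through $u$ may be shorter than the arc it replaces).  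Moreover, performing the local augmentations $w_k \mapsto w_k + v_u$ coherently across all $\binom{l}{2}$ pairs is exactly the combinatorial content that a proof must supply, and you do not.

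It may help to know that the paper resolves this in a clean, global way that makes no use of the maximality of $C$ at all.  For each vertex $k$ of the Delaunay graph, call $c\in C$ a \emph{connector} of $k$ if there is a path from $k$ to $c$ meeting $C$ only at $c$; then set $I_\ell$ to be the set of all $k$ whose connectors all lie in the initial arc $\{0,\ldots,\ell\}$ of $C$, and take $u_\ell = v_{I_\ell}$.  The differences $u_j - u_i = v_{I_j\setminus I_i}$ are then shown to be strict Voronoi vectors by elementary path-chasing: every $k\in I_j\setminus I_i$ has a connector in $\{i+1,\ldots,j\}$, and the corresponding path stays inside $I_j\setminus I_i$; symmetrically for the complement.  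This is the precise, once-and-for-all version of your informal ``absorb off-cycle vertices into $I_k$'' idea.  Since it works for an arbitrary cycle, the ``longest cycle'' in the statement is only there to give the best bound, not as an extremal hypothesis in the proof --- and the P\'osa-type machinery you were reaching for is unnecessary.  (Your initial reduction to biconnected $D$ via Theorem~\ref{thm:OrthogonalSum} is valid but also unnecessary.)
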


\begin{proof}
  Up to a permutation of the indices, we may assume that
  $\{0,1,\ldots,\sigma - 1 \}$ is a cycle $C$ in
  $D(\Lambda,\{v_0,\ldots,v_n\})$. We shall construct a clique of
  $\Cayley(\Lambda, \Vor(\Lambda))$ of size $\sigma$.

  For any $k$ in $\{0, \ldots, n\}$, we say that $c\in C$ is a
  \textit{connector} between $k$ and $C$ if there exists a path
  $\gamma = (k_1 = k, k_2, \ldots, k_s=c)$ in
  $D(\Lambda,\{v_0,\ldots,v_n\})$ such that $c$ is the only vertex on
  the path that belongs to $C$.

  Let $0 \leq \ell \leq \sigma - 1$. We define the set $I_\ell$ as the
  subset of all vertices $k$ in $\{0,\ldots, n \} $ such that every
  connector from $k$ to $C$ is in $\{0,1,\ldots,\ell \}$. In
  particular, for $\ell=\sigma - 1$,
  $I_{\sigma - 1}= \{0,\ldots, n \} $. An example is depicted in
  Figure \ref{fig:clique}. Then, if we define $u_\ell=v_{I_\ell}$ for
  all $0 \leq \ell \leq \sigma - 1$, then for every
  $0 \leq i < j \leq \sigma - 1$,
\[
 u_j-u_i=v_{I_{j}\setminus I_{i}}.
 \]
 In order to show that the set $\{ u_1, u_2,\ldots, u_\sigma \} $ is a
 clique in $\Cayley(\Lambda, \Vor(\Lambda))$, we need to prove that
 for every $0 \leq i < j \leq \sigma - 1$, the vector
 $v_{I_{j}\setminus I_{i}}$ is in $\Vor(\Lambda)$. Equivalently, since
 $D(\Lambda,\{v_0,\ldots,v_n\})$ is connected, we need to check that
 both $I_{j}\setminus I_{i}$ and its complementary in
 $\{0,\ldots, n \} $ induce connected subgraphs of
 $D(\Lambda,\{v_0,\ldots,v_n\})$.

 Take $k$ in $I_{j}\setminus I_{i}$. Since $k$ is not in $I_i$, there
 is a connector between $k$ and $C$ which is not in $\{0,\ldots,i \}$;
 but since $k$ is in $I_j$, this connector must be in
 $\{i+1,\ldots,j \}$. So $k$, as well as all the vertices in this
 path, are in $I_{j}\setminus I_{i}$ and are connected to
 $\{i+1,\ldots,j \}$, which is obviously connected and included in
 $I_{j}\setminus I_{i}$. Regarding the complementary set, take $k$ not
 in $I_{j}\setminus I_{i}$. Then either $k$ is not in $I_j$, and there
 is a connector between $k$ and $C$ in $\{j+1,\ldots, \sigma -1 \}$,
 or $k$ is in $I_i$, and every connector between $k$ and $C$ is in
 $\{0,\ldots,i \}$. Thus for every such $k$, one can find a path made
 of vertices not in $I_{j}\setminus I_{i}$, going to
 $\{j+1,\ldots, \sigma - 1 \} \cup \{0,\ldots,i \}$.

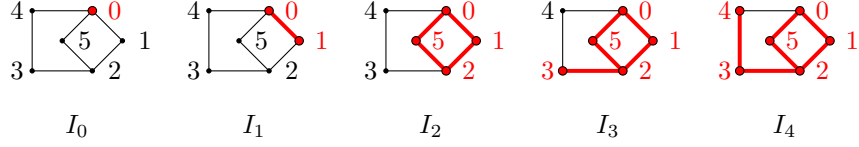
\begin{figure}
\begin{tabular}{ccccc}
\begin{tikzpicture}[scale=0.40]
\clip (-2,-2) rectangle (3,2);
\coordinate (p1) at (-1,1);
\coordinate (p2) at (1,1);
\coordinate (p3) at (1,-1);
\coordinate (p4) at (-1,-1);
\coordinate (p5) at (0,0);
\coordinate (p6) at (2,0);
\draw (p2) node [xshift=0.3cm,yshift=-0cm] {\textcolor{red}{$0$}} ;
\draw (p6) node [xshift=0.3cm,yshift=-0cm] {\textcolor{black}{$1$}} ;
\draw (p3) node [xshift=0.3cm,yshift=-0cm] {\textcolor{black}{$2$}} ;
\draw (p4) node [xshift=-0.2cm,yshift=0cm] {\textcolor{black}{$3$}} ;
\draw (p1) node [xshift=-0.2cm,yshift=0cm] {\textcolor{black}{$4$}} ;
\draw (p5) node [xshift=0.3cm,yshift=-0cm] {\textcolor{black}{$5$}} ;
\draw (p1) -- (p2) ;
\draw (p1) -- (p4) ;
\draw (p4) -- (p3) ;
\draw (p5) -- (p3) ;
\draw (p6) -- (p3) ;
\draw (p5) -- (p2) ;
\draw (p6) -- (p2) ;
\draw[fill=black] (p1) circle (2pt);
\draw[fill=red] (p2) circle (4pt);
\draw[fill=black] (p3) circle (2pt);
\draw[fill=black] (p4) circle (2pt);
\draw[fill=black] (p5) circle (2pt);
\draw[fill=black] (p6) circle (2pt);
\end{tikzpicture}
&
\begin{tikzpicture}[scale=0.40]
\clip (-2,-2) rectangle (3,2);
\coordinate (p1) at (-1,1);
\coordinate (p2) at (1,1);
\coordinate (p3) at (1,-1);
\coordinate (p4) at (-1,-1);
\coordinate (p5) at (0,0);
\coordinate (p6) at (2,0);
\draw (p2) node [xshift=0.3cm,yshift=-0cm] {\textcolor{red}{$0$}} ;
\draw (p6) node [xshift=0.3cm,yshift=-0cm] {\textcolor{red}{$1$}} ;
\draw (p3) node [xshift=0.3cm,yshift=-0cm] {\textcolor{black}{$2$}} ;
\draw (p4) node [xshift=-0.2cm,yshift=0cm] {\textcolor{black}{$3$}} ;
\draw (p1) node [xshift=-0.2cm,yshift=0cm] {\textcolor{black}{$4$}} ;
\draw (p5) node [xshift=0.3cm,yshift=-0cm] {\textcolor{black}{$5$}} ;
\draw (p1) -- (p2) ;
\draw (p1) -- (p4) ;
\draw (p4) -- (p3) ;
\draw (p5) -- (p3) ;
\draw (p6) -- (p3) ;
\draw (p5) -- (p2) ;
\draw[red, line width=1.5] (p6) -- (p2) ;
\draw[fill=black] (p1) circle (2pt);
\draw[fill=red] (p2) circle (4pt);
\draw[fill=black] (p3) circle (2pt);
\draw[fill=black] (p4) circle (2pt);
\draw[fill=black] (p5) circle (2pt);
\draw[fill=red] (p6) circle (4pt);
\end{tikzpicture}
&
\begin{tikzpicture}[scale=0.40]
\clip (-2,-2) rectangle (3,2);
\coordinate (p1) at (-1,1);
\coordinate (p2) at (1,1);
\coordinate (p3) at (1,-1);
\coordinate (p4) at (-1,-1);
\coordinate (p5) at (0,0);
\coordinate (p6) at (2,0);
\draw (p2) node [xshift=0.3cm,yshift=-0cm] {\textcolor{red}{$0$}} ;
\draw (p6) node [xshift=0.3cm,yshift=-0cm] {\textcolor{red}{$1$}} ;
\draw (p3) node [xshift=0.3cm,yshift=-0cm] {\textcolor{red}{$2$}} ;
\draw (p4) node [xshift=-0.2cm,yshift=0cm] {\textcolor{black}{$3$}} ;
\draw (p1) node [xshift=-0.2cm,yshift=0cm] {\textcolor{black}{$4$}} ;
\draw (p5) node [xshift=0.3cm,yshift=-0cm] {\textcolor{red}{$5$}} ;
\draw (p1) -- (p2) ;
\draw (p1) -- (p4) ;
\draw (p4) -- (p3) ;
\draw[red, line width=1.5] (p5) -- (p3) ;
\draw[red, line width=1.5] (p6) -- (p3) ;
\draw[red, line width=1.5] (p5) -- (p2) ;
\draw[red, line width=1.5] (p6) -- (p2) ;
\draw[fill=black] (p1) circle (2pt);
\draw[fill=red] (p2) circle (4pt);
\draw[fill=red] (p3) circle (4pt);
\draw[fill=black] (p4) circle (2pt);
\draw[fill=red] (p5) circle (4pt);
\draw[fill=red] (p6) circle (4pt);
\end{tikzpicture}
&
\begin{tikzpicture}[scale=0.40]
\clip (-2,-2) rectangle (3,2);
\coordinate (p1) at (-1,1);
\coordinate (p2) at (1,1);
\coordinate (p3) at (1,-1);
\coordinate (p4) at (-1,-1);
\coordinate (p5) at (0,0);
\coordinate (p6) at (2,0);
\draw (p2) node [xshift=0.3cm,yshift=-0cm] {\textcolor{red}{$0$}} ;
\draw (p6) node [xshift=0.3cm,yshift=-0cm] {\textcolor{red}{$1$}} ;
\draw (p3) node [xshift=0.3cm,yshift=-0cm] {\textcolor{red}{$2$}} ;
\draw (p4) node [xshift=-0.2cm,yshift=0cm] {\textcolor{red}{$3$}} ;
\draw (p1) node [xshift=-0.2cm,yshift=0cm] {\textcolor{black}{$4$}} ;
\draw (p5) node [xshift=0.3cm,yshift=-0cm] {\textcolor{red}{$5$}} ;
\draw (p1) -- (p2) ;
\draw (p1) -- (p4) ;
\draw[red, line width=1.5] (p4) -- (p3) ;
\draw[red, line width=1.5] (p5) -- (p3) ;
\draw[red, line width=1.5] (p6) -- (p3) ;
\draw[red, line width=1.5] (p5) -- (p2) ;
\draw[red, line width=1.5] (p6) -- (p2) ;
\draw[fill=black] (p1) circle (2pt);
\draw[fill=red] (p2) circle (4pt);
\draw[fill=red] (p3) circle (4pt);
\draw[fill=red] (p4) circle (4pt);
\draw[fill=red] (p5) circle (4pt);
\draw[fill=red] (p6) circle (4pt);
\end{tikzpicture}
&
\begin{tikzpicture}[scale=0.40]
\clip (-2,-2) rectangle (3,2);
\coordinate (p1) at (-1,1);
\coordinate (p2) at (1,1);
\coordinate (p3) at (1,-1);
\coordinate (p4) at (-1,-1);
\coordinate (p5) at (0,0);
\coordinate (p6) at (2,0);
\draw (p2) node [xshift=0.3cm,yshift=-0cm] {\textcolor{red}{$0$}} ;
\draw (p6) node [xshift=0.3cm,yshift=-0cm] {\textcolor{red}{$1$}} ;
\draw (p3) node [xshift=0.3cm,yshift=-0cm] {\textcolor{red}{$2$}} ;
\draw (p4) node [xshift=-0.2cm,yshift=0cm] {\textcolor{red}{$3$}} ;
\draw (p1) node [xshift=-0.2cm,yshift=0cm] {\textcolor{red}{$4$}} ;
\draw (p5) node [xshift=0.3cm,yshift=-0cm] {\textcolor{red}{$5$}} ;
\draw (p1) -- (p2) ;
\draw[red, line width=1.5] (p1) -- (p4) ;
\draw[red, line width=1.5] (p4) -- (p3) ;
\draw[red, line width=1.5] (p5) -- (p3) ;
\draw[red, line width=1.5] (p6) -- (p3) ;
\draw[red, line width=1.5] (p5) -- (p2) ;
\draw[red, line width=1.5] (p6) -- (p2) ;
\draw[fill=red] (p1) circle (4pt);
\draw[fill=red] (p2) circle (4pt);
\draw[fill=red] (p3) circle (4pt);
\draw[fill=red] (p4) circle (4pt);
\draw[fill=red] (p5) circle (4pt);
\draw[fill=red] (p6) circle (4pt);
\end{tikzpicture}
\\
$I_0$
&
$I_1$
&
$I_2$
&
$I_3$
&
$I_4$
\end{tabular}
\caption{Constructing a clique in $\Cayley(\Lambda, \Vor(\Lambda))$
  from the Delaunay graph $D(\Lambda,\{v_0,\ldots,v_n\}) $.}\label{fig:clique}
\end{figure}

\end{proof}

\begin{theorem}
\label{thm:An-and-An-star}
$\chi(\mathsf{A}_n) = \chi(\mathsf{A}_n^*) = n+1$.
\end{theorem}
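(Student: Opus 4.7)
The plan is to assemble the theorem from two bounds, both of which are essentially already in our hands. Since both $\mathsf{A}_n$ and $\mathsf{A}_n^*$ have been exhibited as lattices of Voronoi's first kind with obtuse superbases $\{v_0, v_1, \ldots, v_n\}$ of size $n+1$, Lemma~\ref{lem:VorFirstKindUpperBound} immediately gives
\[
\chi(\mathsf{A}_n) \leq n+1 \qquad \text{and} \qquad \chi(\mathsf{A}_n^*) \leq n+1.
\]
So the content of the theorem is the matching lower bound.

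For the lower bound I would use the clique construction sketched already in Section~\ref{ssec:firstkinddefs} (which is the specialization of the general cycle theorem just proved). Concretely, I would verify that
\[
S = \{\, 0,\; v_{\{1\}},\; v_{\{1,2\}},\; \ldots,\; v_{\{1,\ldots,n\}} \,\}
\]
is a clique of size $n+1$ in $\Cayley(\Lambda, \Vor(\Lambda))$ for $\Lambda \in \{\mathsf{A}_n, \mathsf{A}_n^*\}$. The successive differences are the vectors $v_{\{i, i+1, \ldots, j\}}$ for $1 \leq i \leq j \leq n$, so I only need to check that each such vector is a strict Voronoi vector. For $\mathsf{A}_n$ these differences are $e_i - e_{j+1}$, which are minimal (squared norm $2$) hence certainly strict Voronoi vectors. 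For $\mathsf{A}_n^*$ the fact---recalled in Section~\ref{ssec:firstkinddefs}---that \emph{every} $v_I$ is a strict Voronoi vector handles the claim at once.

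Alternatively, and perhaps more conceptually, I would invoke the cycle theorem just proved. It suffices to exhibit in each case a cycle of length $n+1$ in the Delaunay graph $D(\Lambda,\{v_0,\ldots,v_n\})$. A direct inner-product computation using the superbases given in Section~\ref{ssec:firstkinddefs} shows that the Delaunay graph of $\mathsf{A}_n$ is exactly the $(n+1)$-cycle $0\text{--}1\text{--}\cdots\text{--}n\text{--}0$ (since $v_i \cdot v_{i\pm 1} = -1$ with indices read cyclically and all other pairs are orthogonal), while the Delaunay graph of $\mathsf{A}_n^*$ is the complete graph $K_{n+1}$ (since $v_i \cdot v_j = -1$ for all $i \neq j$), which certainly contains a Hamiltonian cycle.

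There is no real obstacle here; the work was done in the preceding lemmas. The only point that requires any care is checking, for $\mathsf{A}_n$, that the vectors $e_i - e_{j+1}$ are genuinely strict Voronoi vectors and not merely Voronoi vectors---but this is immediate from the characterization~\eqref{eq:Voronoi-characterization} since they have the minimal norm $\sqrt{2}$ and the only element of $(e_i - e_{j+1}) + 2\mathsf{A}_n$ with norm $\sqrt{2}$ is $\pm(e_i - e_{j+1})$ itself.
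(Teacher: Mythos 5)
Your proposal is correct and matches the paper's approach: the paper proves exactly this theorem by observing that the Delaunay graph of $\mathsf{A}_n$ is the $(n+1)$-cycle and that of $\mathsf{A}_n^*$ is $K_{n+1}$, so the cycle lower bound meets the superbasis upper bound from Lemma~\ref{lem:VorFirstKindUpperBound}. Your ``first alternative'' (the explicit clique $\{0, v_{\{1\}}, \ldots, v_{\{1,\ldots,n\}}\}$) is also already in the paper, as the informal argument given in Section~\ref{ssec:firstkinddefs} right before the cycle theorem is proved.
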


\begin{proof}
For $\mathsf{A}_n^*$ the Delaunay graph
$D(\mathsf{A}_n^*, \{v_0,\ldots,v_n\})$ is the complete graph
$K_{n+1}$. For $\mathsf{A}_n$ the Delaunay graph
$D(\mathsf{A}_n, \{v_0,\ldots,v_n\})$ is a cycle of length $n+1$. In both
cases, our upper bound and lower bound coincide.
\end{proof}

\begin{example}
  In general, our upper bound differs from the lower bound.  Both
  bounds can be attained: On the left of Figure \ref{fig:Ex}, the longest cycle
  has size $4$ and one can find a coloring of the corresponding
  $5$-dimensional lattice with $4$ colors, whereas the lattice
  associated with the graph depicted on the right of Figure \ref{fig:Ex}, whose
  longest has size $5$, has chromatic number $6$. This can be seen by
  computing the chromatic number of a small subgraph of
  $\Cayley(\Lambda, \Vor(\Lambda))$.
\end{example}

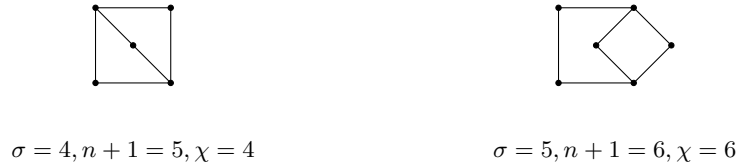
\begin{figure}[h!]
    \centering
    \begin{subfigure}[b]{0.5\textwidth}
        \centering
        \begin{tikzpicture}[scale=0.5]
\clip (-2,-2) rectangle (2,2);
\coordinate (p1) at (-1,1);
\coordinate (p2) at (1,1);
\coordinate (p3) at (1,-1);
\coordinate (p4) at (-1,-1);
\coordinate (p5) at (0,0);
\draw[fill=black] (p1) circle (2pt);
\draw[fill=black] (p2) circle (2pt);
\draw[fill=black] (p3) circle (2pt);
\draw[fill=black] (p4) circle (2pt);
\draw[fill=black] (p5) circle (2pt);
\draw (p1) -- (p2) ;
\draw (p1) -- (p3) ;
\draw (p1) -- (p4) ;
\draw (p2) -- (p3) ;
\draw (p3) -- (p4) ;
\end{tikzpicture}
       \caption*{$ \sigma=4, n+1=5, \chi=4 $}
    \end{subfigure}%
    ~
    \begin{subfigure}[b]{0.5\textwidth}
        \centering

        \begin{tikzpicture}[scale=0.5]
\clip (-2,-2) rectangle (3,2);
\coordinate (p1) at (-1,1);
\coordinate (p2) at (1,1);
\coordinate (p3) at (1,-1);
\coordinate (p4) at (-1,-1);
\coordinate (p5) at (0,0);
\coordinate (p6) at (2,0);
\draw[fill=black] (p1) circle (2pt);
\draw[fill=black] (p2) circle (2pt);
\draw[fill=black] (p3) circle (2pt);
\draw[fill=black] (p4) circle (2pt);
\draw[fill=black] (p5) circle (2pt);
\draw[fill=black] (p6) circle (2pt);
\draw (p1) -- (p2) ;
\draw (p1) -- (p4) ;
\draw (p4) -- (p3) ;
\draw (p5) -- (p3) ;
\draw (p6) -- (p3) ;
\draw (p5) -- (p2) ;
\draw (p6) -- (p2) ;
\end{tikzpicture}
    \caption*{$ \sigma=5, n+1=6, \chi=6 $}
    \end{subfigure}
    \caption{In the inequality $ \sigma \leq \chi \leq n+1 $, both bounds can be sharp.}\label{fig:Ex}
\end{figure}

\subsection{Application: the chromatic number of $3$-dimensional lattices}

Every lattice in dimensions $2$ and $3$ is of Voronoi's first kind,
see \cite{Conway1992a}. We can compute the chromatic number of these
lattices by applying our bounds which coincide in these cases, see
Table~\ref{tab:dim3}.

\begin{table}[!h]
\hspace*{-1.2cm}
\begin{tabular}{c|ccccc}
\multirow{2}{*}{\textit{lattice}}  & \multirow{2}{*}{$\mathbb{Z}^3$} & \multirow{2}{*}{$\mathsf{A}_2 \perp \mathbb{Z}$} & \multirow{2}{*}{$\mathsf{A}_3$} & \multirow{2}{*}{\begin{tiny}
$\mathbb{Z}\begin{pmatrix}
2\\0\\0
\end{pmatrix}\oplus \mathbb{Z}\begin{pmatrix}
0\\2\\0
\end{pmatrix} \oplus \mathbb{Z}\begin{pmatrix}
-1\\-1\\2
\end{pmatrix} $
\end{tiny}} & \multirow{2}{*}{$\mathsf{A}_3^*$}
\\
 &&&&&
\\[1ex]
  & \multirow{3}{*}{\begin{tikzpicture}[scale=0.35]
\draw  (0,0,1) +(1,1,0) -- +(1,-1,0) -- +(-1,-1,0) -- +(-1,1,0) -- cycle ;
\draw [dashed] (0,0,-1) +(1,1,0) -- +(1,-1,0) -- +(-1,-1,0) -- +(-1,1,0) -- cycle ;
\draw   (0,0,-1)+(-1,1,0) --+(1,1,0) -- +(1,-1,0) ;
\draw (1,1,-1) -- (1,1,1);
\draw (1,-1,-1) -- (1,-1,1);
\draw (-1,1,-1) -- (-1,1,1);
\draw [dashed] (-1,-1,-1) -- (-1,-1,1);
\end{tikzpicture} }
&
\multirow{3}{*}{\tdplotsetmaincoords{70}{145}
\begin{tikzpicture}[scale=0.3,tdplot_main_coords]
\draw  (0,0,1) +(2,0,0) -- +(1,2*0.86602,0) -- +(-1,2*0.86602,0) -- +(-2,0,0) -- +(-1,-2*0.86602,0) -- +(1,-2*0.86602,0) -- cycle ;
\draw [dashed] (0,0,-1) +(2,0,0) -- +(1,2*0.86602,0) -- +(-1,2*0.86602,0) -- +(-2,0,0) -- +(-1,-2*0.86602,0) -- +(1,-2*0.86602,0) -- cycle ;
\draw   (0,0,-1)+(1,-2*0.86602,0) -- +(2,0,0)  -- +(1,2*0.86602,0) -- +(-1,2*0.86602,0)  ;
\draw (2,0,-1) -- (2,0,1);
\draw (1,2*0.86602,-1) -- (1,2*0.86602,1);
\draw (-1,2*0.86602,-1) -- (-1,2*0.86602,1);
\draw [dashed](-2,0,-1) -- (-2,0,1);
\draw [dashed] (-1,-2*0.86602,-1) -- (-1,-2*0.86602,1);
\draw  (1,-2*0.86602,-1) -- (1,-2*0.86602,1);
\end{tikzpicture}}
&
\multirow{3}{*}{\tdplotsetmaincoords{90}{65}
\begin{tikzpicture}[scale=0.3,tdplot_main_coords]
\draw [dashed] (-2,0,0)  -- (-1,-1,1) -- (0,-2,0)  -- (-1,-1,-1) -- cycle ;
\draw  (2,0,0)  -- (1,1,1) -- (0,2,0)  -- (1,1,-1) -- cycle ;
\draw  (2,0,0)  -- (1,-1,1) -- (0,-2,0)  -- (1,-1,-1) -- cycle ;
\draw [dashed](-2,0,0)  -- (-1,1,1) -- (0,2,0)  -- (-1,1,-1) -- cycle ;
\draw  (2,0,0)  -- (1,1,1) -- (0,0,2)  -- (1,-1,1) -- cycle ;
\draw [dashed] (-2,0,0)  -- (-1,-1,-1) -- (0,0,-2)  -- (-1,1,-1) -- cycle ;
\draw [dashed] (-2,0,0)  -- (-1,1,1) -- (0,0,2)  -- (-1,-1,1) -- cycle ;
\draw  (2,0,0)  -- (1,-1,-1) -- (0,0,-2)  -- (1,1,-1) -- cycle ;
\draw [dashed] (0,2,0)  -- (1,1,1) -- (0,0,2)  -- (-1,1,1) -- cycle ;
\draw  (0,-2,0)  -- (-1,-1,-1) -- (0,0,-2)  -- (1,-1,-1) -- cycle ;
\draw [dashed] (0,2,0)  -- (1,1,-1) -- (0,0,-2)  -- (-1,1,-1) -- cycle ;
\draw  (0,-2,0)  -- (-1,-1,1) -- (0,0,2)  -- (1,-1,1) -- cycle ;
 \end{tikzpicture}}
&
\multirow{3}{*}{\tdplotsetmaincoords{90}{35}
\begin{tikzpicture}[scale=0.4,tdplot_main_coords]
\draw [dashed] (-1,0,0) +(0,0,1) -- +(0,1,1/2) -- +(0,1,-1/2) -- +(0,0,-1) -- +(0,-1,-1/2) -- +(0,-1,1/2) --cycle ;
\draw [dashed] (0,1,0) +(0,0,1) -- +(1,0,1/2) -- +(1,0,-1/2) -- +(0,0,-1) -- +(-1,0,-1/2) -- +(-1,0,1/2) --cycle ;
\draw [dashed] (1/2,1/2,1) +(-1/2,-1/2,1/2) -- +(1/2,-1/2,0) -- +(1/2,1/2,-1/2) -- +(-1/2,1/2,0)  --cycle ;
\draw [dashed] (-1/2,1/2,-1) +(-1/2,1/2,1/2) -- +(1/2,1/2,0) -- +(1/2,-1/2,-1/2) -- +(-1/2,-1/2,0)  --cycle ;
\draw [dashed] (-1/2,1/2,1) +(1/2,-1/2,1/2) -- +(-1/2,-1/2,0) -- +(-1/2,1/2,-1/2) -- +(1/2,1/2,0)  --cycle ;
\draw  (1/2,-1/2,-1) +(1/2,-1/2,1/2) -- +(-1/2,-1/2,0) -- +(-1/2,1/2,-1/2) -- +(1/2,1/2,0)  --cycle ;
\draw [dashed] (1/2,1/2,-1) +(1/2,1/2,1/2) -- +(-1/2,1/2,0) -- +(-1/2,-1/2,-1/2) -- +(1/2,-1/2,0)  --cycle ;
\draw  (-1/2,-1/2,-1) +(-1/2,-1/2,1/2) -- +(1/2,-1/2,0) -- +(1/2,1/2,-1/2) -- +(-1/2,1/2,0)  --cycle ;
\draw  (1/2,-1/2,1) +(-1/2,1/2,1/2) -- +(1/2,1/2,0) -- +(1/2,-1/2,-1/2) -- +(-1/2,-1/2,0)  --cycle ;
\draw  (1,0,0) +(0,0,1) -- +(0,1,1/2) -- +(0,1,-1/2) -- +(0,0,-1) -- +(0,-1,-1/2) -- +(0,-1,1/2) --cycle ;
\draw  (0,-1,0) +(0,0,1) -- +(1,0,1/2) -- +(1,0,-1/2) -- +(0,0,-1) -- +(-1,0,-1/2) -- +(-1,0,1/2) --cycle ;
\draw  (-1/2,-1/2,1) +(1/2,1/2,1/2) -- +(-1/2,1/2,0) -- +(-1/2,-1/2,-1/2) -- +(1/2,-1/2,0)  --cycle ;
\end{tikzpicture}}
&
\multirow{3}{*}{\tdplotsetmaincoords{130}{60}
\begin{tikzpicture}[tdplot_main_coords,scale=0.3]
\draw  (2,0,-1)  -- +(0,1,1)  ;
\draw  (2,-1,0)  -- +(0,1,1)  ;
\draw [dashed] (-2,0,-1)  -- +(0,1,1)  ;
\draw [dashed] (-2,-1,0)  -- +(0,1,1)  ;
\draw  (0,1,-2)  -- +(0,1,1)  ;
\draw  (0,-2,1)  -- +(0,1,1)  ;
\draw  (2,0,1)  -- +(0,1,-1)  ;
\draw  (2,-1,0)  -- +(0,1,-1)  ;
\draw [dashed] (-2,0,1)  -- +(0,1,-1)  ;
\draw  (-2,-1,0)  -- +(0,1,-1)  ;
\draw [dashed] (0,1,2)  -- +(0,1,-1) ;
\draw  (0,-2,-1)  -- +(0,1,-1) ;
\draw  (0,2,-1)  -- +(1,0,1)  ;
\draw [dashed] (-1,2,0)  -- +(1,0,1)  ;
\draw  (0,-2,-1)  -- +(1,0,1)  ;
\draw  (-1,-2,0)  -- +(1,0,1)  ;
\draw  (1,0,-2)  -- +(1,0,1)  ;
\draw [dashed] (-2,0,1)  -- +(1,0,1)  ;
\draw [dashed] (0,2,1)  -- +(1,0,-1)  ;
\draw [dashed] (-1,2,0)  -- +(1,0,-1)  ;
\draw  (0,-2,1)  -- +(1,0,-1)  ;
\draw  (-1,-2,0)  -- +(1,0,-1)  ;
\draw  (1,0,2)  -- +(1,0,-1)  ;
\draw  (-2,0,-1)  -- +(1,0,-1)  ;
\draw  (0,-1,2)  -- +(1,1,0)  ;
\draw [dashed] (-1,0,2)  -- +(1,1,0)  ;
\draw  (0,-1,-2)  -- +(1,1,0) ;
\draw  (-1,0,-2)  -- +(1,1,0)  ;
\draw  (1,-2,0)  -- +(1,1,0)  ;
\draw [dashed] (-2,1,0)  -- +(1,1,0)  ;
\draw [dashed] (0,1,2)  -- +(1,-1,0)  ;
\draw [dashed] (-1,0,2)  -- +(1,-1,0)  ;
\draw  (0,1,-2)  -- +(1,-1,0) ;
\draw  (-1,0,-2)  -- +(1,-1,0)  ;
\draw  (1,2,0)  -- +(1,-1,0)  ;
\draw  (-2,-1,0)  -- +(1,-1,0)  ;
 \end{tikzpicture}}
\\
\textit{Voronoi}
&&&&&
\\
\textit{cell}&&&&&
\\
& \multirow{2}{*}{{\small cube}} & {\small hexagonal}  & {\small rhombic}  & {\small elongated}  & {\small truncated}
\\
&  &  {\small prism} &  {\small dodecahedron} &  {\small dodecahedron} &  {\small octahedron}
\\
&&&&&
\\[-3ex]
  & \multirow{5}{*}{\begin{tikzpicture}[scale=0.8]
\clip (-0,-0.5) rectangle (2,1.1);
\coordinate (p1) at (1/2,0.86602);
\coordinate (p2) at (1/2,0);
\coordinate (p3) at (2/2,0.86602);
\coordinate (p4) at (2/2,0);
\coordinate (p5) at (3/2,0.86602);
\coordinate (p6) at (3/2,0);
\draw[fill=black] (p1) circle (2pt);
\draw[fill=black] (p2) circle (2pt);
\draw[fill=black] (p3) circle (2pt);
\draw[fill=black] (p4) circle (2pt);
\draw[fill=black] (p5) circle (2pt);
\draw[fill=black] (p6) circle (2pt);
\draw (p1) -- (p2) ;
\draw (p4) -- (p3) ;
\draw (p5) -- (p6) ;
\end{tikzpicture}}
&
\multirow{5}{*}{\begin{tikzpicture}[scale=0.8]
\clip (-0.1,-0.5) rectangle (2,1.1);
\coordinate (p1) at (1,0);
\coordinate (p2) at (0,0);
\coordinate (p3) at (1/2,0.86602);
\coordinate (p4) at (3/2,0.86602);
\coordinate (p5) at (3/2,0);
\draw[fill=black] (p1) circle (2pt);
\draw[fill=black] (p2) circle (2pt);
\draw[fill=black] (p3) circle (2pt);
\draw[fill=black] (p4) circle (2pt);
\draw[fill=black] (p5) circle (2pt);
\draw (p1) -- (p2) ;
\draw (p1) -- (p3) ;
\draw (p2) -- (p3) ;
\draw (p5) -- (p4) ;
\end{tikzpicture}}
&
\multirow{5}{*}{\begin{tikzpicture}[scale=0.5]
\clip (-1.5,-1.5) rectangle (1.3,1.1);
\coordinate (p1) at (-1,1);
\coordinate (p2) at (1,1);
\coordinate (p3) at (1,-1);
\coordinate (p4) at (-1,-1);
\draw[fill=black] (p1) circle (2pt);
\draw[fill=black] (p2) circle (2pt);
\draw[fill=black] (p3) circle (2pt);
\draw[fill=black] (p4) circle (2pt);
\draw (p1) -- (p2) ;
\draw (p1) -- (p4) ;
\draw (p2) -- (p3) ;
\draw (p3) -- (p4) ;
\end{tikzpicture}}
&
\multirow{5}{*}{\begin{tikzpicture}[scale=0.5]
\clip (-1.5,-1.5) rectangle (1.5,1.1);
\coordinate (p1) at (-1,1);
\coordinate (p2) at (1,1);
\coordinate (p3) at (1,-1);
\coordinate (p4) at (-1,-1);
\draw[fill=black] (p1) circle (2pt);
\draw[fill=black] (p2) circle (2pt);
\draw[fill=black] (p3) circle (2pt);
\draw[fill=black] (p4) circle (2pt);
\draw (p1) -- (p2) ;
\draw (p1) -- (p3) ;
\draw (p1) -- (p4) ;
\draw (p2) -- (p3) ;
\draw (p3) -- (p4) ;
\end{tikzpicture}}
&
\multirow{5}{*}{\begin{tikzpicture}[scale=0.5]
\clip (-1.5,-1.5) rectangle (1.5,1.1);
\coordinate (p1) at (-1,1);
\coordinate (p2) at (1,1);
\coordinate (p3) at (1,-1);
\coordinate (p4) at (-1,-1);
\draw[fill=black] (p1) circle (2pt);
\draw[fill=black] (p2) circle (2pt);
\draw[fill=black] (p3) circle (2pt);
\draw[fill=black] (p4) circle (2pt);
\draw (p1) -- (p2) ;
\draw (p1) -- (p3) ;
\draw (p1) -- (p4) ;
\draw (p2) -- (p3) ;
\draw (p2) -- (p4) ;
\draw (p3) -- (p4) ;
\end{tikzpicture}}
\\
\textit{Delaunay}&&&&&
\\
\textit{graph}&&&&&
\\
&&&&&
\\
\textit{chromatic} & \multirow{2}{*}{$2$} &\multirow{2}{*}{$3$} & \multirow{2}{*}{$4$} & \multirow{2}{*}{$4$} & \multirow{2}{*}{$4$}
\\
\textit{number} &&&&&
\\
\end{tabular}
\caption{The chromatic number of $3$-dimensional lattices}\label{tab:dim3}
\end{table}

\section{Sphere packing lower bounds}
\label{sec:spherepackingbound}

In this section we prove lower bounds for the chromatic number of a
lattice by considering connections to the classical sphere packing
problem.

A subset $\mathcal{P}$ of $\mathbb{R}^n$ defines a packing of unit
spheres if the distance between all pairs of distinct points in
$\mathcal{P}$ is at least $2$. Define the center density of
$\mathcal{P}$ as the number of points in $\mathcal{P}$ per unit
volume, more precisely the \emph{(upper) center density} of
$\mathcal{P}$ is defined as
\[
\delta(\mathcal{P}) = \limsup_{R \to \infty} \frac{|\mathcal{P} \cap
  [-R,R]^{n}|}{\vol [-R,R]^{n}},
\]
where $[-R,R]^n$ is the regular $n$-dimensional cube with side length
$2R$. The largest center density of a packing of unit spheres in
$\mathbb{R}^n$ is
\[
\delta_{\mathbb{R}^n} = \sup\{\delta(\mathcal{P}) : \mathcal{P}
\subseteq \mathbb{R}^n \text{ defines a packing of unit spheres}\}.
\]

\begin{theorem}
\label{thm:spherepackingbound}
Let $\Lambda$ be an $n$-dimensional lattice which defines a packing of
unit spheres. Let $\rho$ be a positive real number so that all strict
Voronoi vectors of $\Lambda$ have length strictly less than
$\rho$. Then,
\[
\chi(\Lambda) \geq \left(\frac{\rho}{2}\right)^n \frac{\delta(\Lambda)}{\delta_{\mathbb{R}^n}}.
\]
\end{theorem}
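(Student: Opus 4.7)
The plan is to realise the densest color class of a proper coloring of $\Lambda$ as a sphere packing in $\mathbb{R}^n$ and then invoke the trivial sphere packing bound on its density.

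Fix a proper coloring $c \colon \Lambda \to \{1,\ldots,k\}$ with $k = \chi(\Lambda)$, and let $C \subseteq \Lambda$ be the color class of largest upper center density. Since the $k$ color classes partition $\Lambda$ and the $\limsup$ in the definition of $\delta$ is subadditive, one has $\sum_{i=1}^k \delta(C_i) \geq \delta(\Lambda)$, and so by pigeonhole $\delta(C) \geq \delta(\Lambda)/k$.

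Next, for any two distinct $x, y \in C$, properness of $c$ forces $x - y \notin \Vor(\Lambda)$. The hypothesis on $\rho$ enters here: under it, every nonzero lattice vector of length strictly less than $\rho$ must be a strict Voronoi vector, and hence $\|x - y\| \geq \rho$. Consequently the open balls $B(x, \rho/2)$, $x \in C$, are pairwise disjoint, i.e.\ $C$ realises a sphere packing of radius $\rho/2$. Rescaling $C$ by the factor $2/\rho$ produces a unit sphere packing whose center density is $(\rho/2)^n \delta(C)$, and by the definition of $\delta_{\mathbb{R}^n}$ this is at most $\delta_{\mathbb{R}^n}$. Combining with the pigeonhole bound and rearranging yields
\[
\chi(\Lambda) = k \geq \frac{(\rho/2)^n \delta(\Lambda)}{\delta_{\mathbb{R}^n}}.
\]

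The delicate step is the deduction $\|x - y\| \geq \rho$. Taken literally, the hypothesis ``all strict Voronoi vectors have length $< \rho$'' only bounds Voronoi vectors from above, whereas the sphere packing step truly uses the stronger property that no non-Voronoi lattice vector is shorter than $\rho$ either. For the lattices to which this theorem is applied (root lattices, the Leech lattice), Voronoi vectors concentrate at the first few lattice norms and are followed by a visible gap, so one can choose $\rho$ in this gap and the two conditions coincide, making the bound both meaningful and tight in the examples of interest.
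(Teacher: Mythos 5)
Your proof mirrors the paper's almost step for step: decompose $\Lambda$ into color classes, take the densest class $C$, get $k\,\delta(C)\geq\delta(\Lambda)$ by pigeonhole, observe that distinct points of $C$ are at mutual distance $\geq\rho$, rescale by $2/\rho$ to produce a unit-sphere packing, and combine the resulting bound $\delta_{\mathbb{R}^n}\geq(\rho/2)^n\delta(C)$ with the pigeonhole estimate. There is nothing new in the method.

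Your closing remark about the ``delicate step,'' however, is not a mere caveat about tightness --- it pinpoints a genuine misstatement in the theorem's hypothesis, and you are right to flag it. As written, the hypothesis ``all strict Voronoi vectors of $\Lambda$ have length strictly less than $\rho$'' places only an \emph{upper} bound on $\Vor(\Lambda)$ and therefore is satisfied by every sufficiently large $\rho$; that would make the right-hand side $(\rho/2)^n\,\delta(\Lambda)/\delta_{\mathbb{R}^n}$ unbounded, contradicting $\chi(\Lambda)\leq 2^n$. What the proof (both yours and the paper's) actually needs is the \emph{converse} property: every nonzero lattice vector of length strictly less than $\rho$ is a strict Voronoi vector, equivalently every nonzero non-Voronoi vector has length at least $\rho$. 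This is exactly what the subsequent Lemma~\ref{lem:length} supplies (with $\rho=\sqrt{8}$), and it is the condition used in the applications to $\Lambda_{24}$ and $\mathsf{E}_8$. So the theorem should be read with the hypothesis replaced by this converse; with that correction, your argument is complete and coincides with the paper's.
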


\begin{proof}
Suppose that the chromatic number of $\Lambda$ equals $k$. Then one
can decompose $\Lambda$ into $k$ color classes $C_1, \ldots, C_k$. We
may assume that the first color class $C_1$ has the largest density
among these color classes. In particular, inequality
\[
k \delta(C_1) \geq \delta(\Lambda)
\]
holds. Then for all $v, w \in C_1$ with $v \neq w$ we have
$\|v - w\| \geq \rho$. Hence, $\frac{2}{\rho} C_1$ defines a packing of unit
spheres. So,
\[
\delta_{\mathbb{R}^n} \geq \delta\left(\frac{2}{\rho} C_1\right)  = \left(\frac{\rho}{2}\right)^n \delta(C_1),
\]
and the claim of the theorem follows by combining the two inequalities above.
\end{proof}

The following lemma gives a lower bound for $\rho$.

\begin{lemma}
\label{lem:length}
Let $\Lambda$ be an $n$-dimensional lattice which defines a packing of
unit spheres. If $v \in \Lambda \setminus
\{0\}$ is not a strict Voronoi vector, then $\|v\| \geq \sqrt{8}$.
\end{lemma}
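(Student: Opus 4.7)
The plan is to combine the Voronoi characterization of strict Voronoi vectors recalled in \eqref{eq:Voronoi-characterization} with the parallelogram identity. Since $\Lambda$ gives a unit sphere packing, every nonzero element of $\Lambda$ has norm at least $2$, and therefore every nonzero element of $2\Lambda$ has norm at least $4$. This is the only arithmetic input.

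First I would unpack the hypothesis that $v$ is not a strict Voronoi vector. By \eqref{eq:Voronoi-characterization}, this means there exists some $w \in v + 2\Lambda$ with $w \neq \pm v$ and $\|w\| \leq \|v\|$. The key observation is then that both $w - v$ and $w + v$ lie in $2\Lambda$: indeed $w - v \in 2\Lambda$ by the choice of $w$, and $w + v = (w - v) + 2v \in 2\Lambda$ as well. Moreover, since $w \neq v$ and $w \neq -v$, both vectors are nonzero, so each has norm at least $4$.

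Now I would apply the parallelogram identity
\begin{equation*}
\|w - v\|^2 + \|w + v\|^2 = 2\|w\|^2 + 2\|v\|^2.
\end{equation*}
The left-hand side is at least $16 + 16 = 32$, while the right-hand side is at most $4\|v\|^2$ using $\|w\| \leq \|v\|$. Rearranging gives $\|v\|^2 \geq 8$, i.e.\ $\|v\| \geq \sqrt{8}$, as claimed.

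There is no genuine obstacle here; the only subtlety worth stressing in the write-up is that one must verify $w - v$ and $w + v$ are both nonzero (which is exactly the condition $w \neq \pm v$ in the Voronoi characterization), since otherwise the lower bound of $4$ on their norms would not apply.
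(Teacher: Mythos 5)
Your proof is correct and follows essentially the same route as the paper's: both rest on Voronoi's characterization \eqref{eq:Voronoi-characterization} plus the fact that every nonzero vector of $2\Lambda$ has norm at least~$4$. The only difference is cosmetic: the paper normalizes the sign so that $v \cdot w \geq 0$ and then works with $u = \tfrac{1}{2}(v-w)$ alone, whereas you add the two estimates for $v \pm w$ via the parallelogram identity, which neatly sidesteps that without-loss-of-generality step.
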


\begin{proof}
  By Voronoi's characterization of strict Voronoi vectors
  \eqref{eq:Voronoi-characterization} there is a lattice vector
  $w \in v + 2\Lambda$ with $w \neq \pm v$ and $\|w\| \leq \|v\|$. We
  may assume that $v \cdot w \geq 0$, otherwise we replace $w$ by its
  negative $-w$. Define
  $u = \frac{1}{2}(v - w) \in \Lambda \setminus \{0\}$. Then,
\[
4\|u\|^2 = \|v - w\|^2 = \|v\|^2 - 2 v \cdot w + \|w\|^2 \leq \|v\|^2
+ \|w\|^2 \leq 2\|v\|^2.
\]
So, $\|v\| \geq \sqrt{2} \|u\| \geq \sqrt{8}$, since $\|u\| \geq 2$.
\end{proof}

\subsection{First application: Chromatic number of $\mathsf{E}_8$ and of the Leech lattice}
\label{ssec:SpherePackingLeechE8}

\begin{theorem}\label{thm:Leech}
The chromatic number of the Leech lattice $\Lambda_{24}$ equals
$4096$.
\end{theorem}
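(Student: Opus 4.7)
The plan is to establish matching bounds $\chi(\Lambda_{24}) \ge 4096$ and $\chi(\Lambda_{24}) \le 4096$. The lower bound is essentially a direct application of Theorem~\ref{thm:spherepackingbound}, combined with the recent sphere-packing optimality of the Leech lattice; the upper bound will come from an explicit coloring arising from an index-$4096$ sublattice avoiding all strict Voronoi vectors.

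For the lower bound, I would observe that $\Lambda_{24}$ has minimum length~$2$ and hence defines a unit sphere packing, so Lemma~\ref{lem:length} guarantees that every nonzero non-strict Voronoi vector has length at least $\sqrt{8}$. Running the proof of Theorem~\ref{thm:spherepackingbound}---picking a densest color class $C\subseteq\Lambda_{24}$ and noting that $(1/\sqrt{2})\,C$ is then a unit sphere packing in $\mathbb{R}^{24}$---together with the sphere-packing optimality $\delta(\Lambda_{24})=\delta_{\mathbb{R}^{24}}$ of Cohn, Kumar, Miller, Radchenko and Viazovska yields $\chi(\Lambda_{24})\ge 2^{12}=4096$.

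For the upper bound I would apply Lemma~\ref{lem:upperboundquotient} to a similar sublattice. Fix an isometric endomorphism $J$ of $\Lambda_{24}$ with $J^{2}=-\mathrm{Id}$, coming from the Hermitian $\mathbb{Z}[i]$-structure on Leech, and set $\Lambda'=(1+J)\Lambda_{24}$. The identity $\|(1+J)v\|^{2}=2\|v\|^{2}$ forces $\Lambda'$ to have minimum $\sqrt{8}$, and $\det(1+J)=2^{12}$ gives $[\Lambda_{24}:\Lambda']=4096$. To check $\Lambda'\cap\Vor(\Lambda_{24})=\emptyset$, I would first bound the length of every strict Voronoi vector by $\sqrt{8}$: since the covering radius of $\Lambda_{24}$ equals $\sqrt{2}$, each facet of $V(\Lambda_{24})$ lies within distance $\sqrt{2}$ of the origin, and a facet corresponding to a strict Voronoi vector $v$ is at distance $\|v\|/2$. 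The only vectors of $\Lambda'$ reaching this bound are its minima $(1+J)v$ with $\|v\|=2$; for such $v$, the element $(J-1)v=(1+J)v-2v$ lies in the same class modulo $2\Lambda_{24}$, has length $\sqrt{8}$, and differs from $\pm(1+J)v$. By the characterization~\eqref{eq:Voronoi-characterization}, $(1+J)v$ is therefore \emph{not} a strict Voronoi vector of $\Lambda_{24}$. All remaining nonzero vectors of $\Lambda'$ are strictly longer than $\sqrt{8}$ and hence also avoid $\Vor(\Lambda_{24})$.

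The only substantive obstacle is the lower bound, which rests on the deep sphere-packing optimality of $\Lambda_{24}$ invoked as a black box; once that input, along with the covering radius of Leech and its complex $\mathbb{Z}[i]$-structure, are granted, the remaining arguments are short and elementary.
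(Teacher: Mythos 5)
Your proposal is correct, and the lower bound is the same as the paper's: pick the densest color class $C$, observe that distinct elements of $C$ differ by non-Voronoi vectors of length at least $\sqrt{8}$ (Lemma~\ref{lem:length}), rescale by $1/\sqrt{2}$ to get a unit sphere packing, and invoke the sphere-packing optimality of $\Lambda_{24}$ (this is just running the proof of Theorem~\ref{thm:spherepackingbound} with $\rho = \sqrt{8}$).

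Your upper bound takes a genuinely different route. The paper cites Dong--Li--Mason--Norton for the existence of a sublattice $\Gamma$ of $\Lambda_{24}$ isometric to $\sqrt{2}\,\Lambda_{24}$, of index $2^{12}$ and minimum norm $8$; it then observes that the strict Voronoi vectors of $\Lambda_{24}$ have norm $4$ or~$6$ (a known fact about the Leech Voronoi cell), so they cannot lie in $\Gamma$. Your version produces the same kind of sublattice explicitly as $(1+J)\Lambda_{24}$ using a $\mathbb{Z}[i]$-structure $J$; this is a concrete instance of the Dong--Li--Mason--Norton result, so the input is comparable. Where you really diverge is in verifying $\Lambda' \cap \Vor(\Lambda_{24}) = \emptyset$: instead of quoting the norms of the Voronoi vectors, you prove an a priori bound $\|v\| \le \sqrt{8}$ for every strict Voronoi vector from the covering radius $\sqrt{2}$ of the Leech lattice (the midpoint $v/2$ lies on the corresponding facet, hence inside the Voronoi cell), and then you dispose of the borderline case $\|v\| = \sqrt{8}$ by exhibiting $(J-1)v \in (1+J)v + 2\Lambda_{24}$, a distinct vector of the same length, which violates Voronoi's characterization~\eqref{eq:Voronoi-characterization}. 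This trades knowledge of the Voronoi-vector norms for knowledge of the covering radius plus a short algebraic computation (using $\langle Jv,v\rangle = 0$). Both are standard facts about the Leech lattice, so neither approach is strictly easier; yours is slightly more self-contained in that the borderline case is handled by the general Voronoi criterion rather than by a combinatorial fact about $\Lambda_{24}$, while the paper's is shorter once one accepts the description of $\Vor(\Lambda_{24})$.
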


\begin{proof}
  It is known that the strict Voronoi vectors of $\Lambda_{24}$ are
  all vectors $v \in \Lambda_{24}$ with $v \cdot v \in \{4,6\}$, see
  \cite{Conway1988a}. Dong, Li, Mason, and Norton showed \cite[Theorem
  4.1]{Dong1998a} that one can find an isometric
  copy of $\sqrt{2}\Lambda_{24}$ as a sublattice $\Gamma$ of
  $\Lambda_{24}$; in \cite{Nebe2014a} Nebe and Parker classified all $16$
  orbits of such sublattices under the action of the automorphism
  group of $\Lambda_{24}$. Clearly, such a sublattice $\Gamma$ has index
  $2^{12} = 4096$ and any nonzero vector $v$ in this sublattice
  satisfies $v \cdot v \geq 8$.  Hence, we can color $\Lambda_{24}$ by
  using the $4096$ cosets $v + \Gamma$ as
  color classes, with $v \in \Lambda_{24}$. Thus,  $\chi(\Lambda_{24}) \leq 4096$.

  For the lower bound, we apply
  Theorem~\ref{thm:spherepackingbound}. The Leech lattice defines the
  densest sphere packing in dimension $24$, see \cite{Cohn2017a},
  $\delta(\Lambda_{24}) = \delta_{\mathbb{R}^{24}}$.  We can apply
  Theorem~\ref{thm:spherepackingbound} with $\rho = \sqrt{8}$ and
  get
\[
\chi(\Lambda_{24}) \geq \left(\frac{\sqrt{8}}{2}\right)^{24} = 4096.
\]
\end{proof}

Similarly, one can show $\chi(\mathsf{E}_{8}) = 16$ by using the
fact that $\mathsf{E}_8$ is the densest sphere packing in dimension
$8$, see \cite{Viazovska2017a}.

\subsection{Second application: Exponential growth of the chromatic
  number}
\label{ssec:asymptotic}

In this section we investigate the asymptotic behaviour of the chromatic
number of lattices when the dimension tends to infinity.

For a lattice $\Lambda \subset \mathbb{R}^n$, we denote by
$\mu = \mu(\Lambda)$ the norm of its smallest nonzero vector. Then
$\frac{2}{\mu} \Lambda$ defines a packing of unit spheres, and
we extend the notation $\delta(\Lambda)$ for the center density of this
packing by
\[
\delta(\Lambda) := \delta\left(\frac{2}{\mu} \Lambda \right) = \frac{1}{\vol \left(\mathbb{R}^n/(\frac{2}{\mu}  \Lambda)\right)}
= \frac{\mu^n}{2^n \cdot \vol(\mathbb{R}^n/ \Lambda)}.
\]

The best upper bound known for $\delta_{\mathbb{R}^n}$ is the
Kabatiansky-Levenshtein bound, \cite{Kabatiansky1978a}, which states
\[
\delta_{\mathbb{R}^n} \leq 2^{(-0.5990\ldots + o(1))n} \cdot V_n^{-1},
\]
where $V_n$ is the volume of the $n$-dimensional unit ball. So by using
Theorem~\ref{thm:spherepackingbound} and Lemma~\ref{lem:length}, we get,
for any lattice $\Lambda \subset \mathbb{R}^n$,
\begin{equation}\label{eq:ChiExponential}
\chi(\Lambda) \geq (\sqrt{2})^n \cdot 2^{(0.5990\ldots - o(1))n} \cdot V_n \cdot
 \delta(\Lambda) =  2^{(1.0990\ldots - o(1))n} \cdot V_n \cdot \delta(\Lambda).
\end{equation}

Let us now recall Siegel's mean value theorem (see
\cite{Siegel1945a}): For any lattice $\Lambda \subset \mathbb{R}^n$
and any $r>0$, we denote by $N_\Lambda(r)$ the number of nonzero
lattice vectors of $\Lambda$ in the open ball $B(r)$ having radius
$r$. The Siegel mean value theorem states that the expected value of
$N_\Lambda(r)$ in a random lattice $\Lambda$ of volume $1$ is
\[
\mathbb{E}[N_\Lambda(r)]=\vol(B(r)).
\]
To compute the expectation one uses the Haar measure on
$\mathrm{SL}_n(\mathbb{R})/\mathrm{SL}_n(\mathbb{Z})$.

This equality implies two remarkable consequences: First, by choosing
$r_n$ such that $\vol(B(r_n)) = 2$, it proves the existence of a lattice
$\Lambda_n$ with strictly less than two nonzero vectors in $B(r_n)$.
Since such a vector would come with its opposite, the minimum of
$\Lambda_n$ has to be at least $r_n$, and therefore the density of
$\Lambda_n$ satisfies
\[
\delta(\Lambda_n) \geq \left(\frac{r_n}{2}\right)^n =
\frac{\vol(B(r_n))}{2^n \cdot V_n} = \frac{2}{2^n\cdot V_n},
\]
which essentially is the Minkowski-Hlawka lower bound for
lattice sphere packings (see \cite{Hlawka1943}). The second
consequence concerns the density of a random lattice: Let us fix
$\varepsilon > 0$, and $r_n$ such that
$\vol(B(r_n))= 2 \cdot (1 + \varepsilon)^{-n}$.  Following the
previous idea, whenever $N_{\Lambda_n}(r_n)<2$, the density of
$\Lambda_n$ satisfies
\[
\delta(\Lambda_n) \geq \frac{2}{(2(1 + \varepsilon))^n \cdot V_n}.
\]
By using Siegel's mean value theorem
and Markov's inequality, we prove that this happens with high
probability when $n$ grows:

\[
\mathbb{P}[N_{\Lambda_n}(r_n) \geq 2]\leq
\frac{\mathbb{E}[N_{\Lambda_n}(r_n)]}{2} = \frac{1}{(1 +
  \varepsilon)^n} \to  0,
\]
if $n$ tends to infinity.

Combined with \eqref{eq:ChiExponential}, these observations provide:

\begin{theorem}
  With high probability, the chromatic number of a random
  $n$-dim\-ensional lattice grows exponentially in $n$.  Moreover, there
  are $n$-dimensional lattices $\Lambda_n$ with
\[
\chi(\Lambda_n) \geq 2 \cdot 2^{(0.0990\ldots - o(1))n}.
\]
\end{theorem}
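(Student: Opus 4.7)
The plan is to assemble inequality~\eqref{eq:ChiExponential} with the Minkowski--Hlawka-type density bound coming from Siegel's mean value theorem, both of which have already been prepared in the paragraphs above.

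First, applying Theorem~\ref{thm:spherepackingbound} to the rescaled unit-sphere packing $\frac{2}{\mu(\Lambda)}\Lambda$, together with Lemma~\ref{lem:length} which allows the choice $\rho=\sqrt{8}$, gives $\chi(\Lambda)\geq(\sqrt{2})^{n}\,\delta(\Lambda)/\delta_{\mathbb{R}^n}$, where $\delta(\Lambda)$ denotes the center density of the rescaled packing and is scale invariant. Invoking the Kabatiansky--Levenshtein bound $\delta_{\mathbb{R}^n}\leq 2^{(-0.5990\ldots+o(1))n}V_n^{-1}$ turns this into $\chi(\Lambda)\geq 2^{(1.0990\ldots-o(1))n}\,V_n\,\delta(\Lambda)$, which is precisely~\eqref{eq:ChiExponential}.

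Second, to produce a specific lattice witnessing the announced bound I would run Siegel's averaging argument. Choose $r_n$ so that $\vol(B(r_n))=2$. Siegel's mean value theorem gives $\mathbb{E}[N_\Lambda(r_n)]=2$ over random unimodular lattices, so there exists a unimodular $\Lambda_n$ with $N_{\Lambda_n}(r_n)<2$; since nonzero lattice vectors come in $\pm$-pairs this forces $\mu(\Lambda_n)\geq r_n$, hence $\delta(\Lambda_n)\geq(r_n/2)^n = 2/(2^n V_n)$. Substituting into the chromatic bound cancels both factors of $V_n$ and yields $\chi(\Lambda_n)\geq 2\cdot 2^{(0.0990\ldots-o(1))n}$.

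For the high-probability half of the statement, shrink $r_n$ to satisfy $\vol(B(r_n))=2(1+\varepsilon)^{-n}$ for some small $\varepsilon>0$. Markov's inequality then gives
\[
\mathbb{P}[N_{\Lambda_n}(r_n)\geq 2]\leq \frac{\mathbb{E}[N_{\Lambda_n}(r_n)]}{2}=(1+\varepsilon)^{-n}\longrightarrow 0,
\]
so with probability tending to $1$ a random unimodular $\Lambda_n$ satisfies $\mu(\Lambda_n)\geq r_n$ and hence $\delta(\Lambda_n)\geq (1+\varepsilon)^{-n}/(2^{n-1}V_n)$; feeding this into~\eqref{eq:ChiExponential} preserves exponential growth provided $\varepsilon$ is chosen smaller than $2^{0.0990\ldots}-1$.

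The main subtlety, rather than a genuine obstacle, is the bookkeeping: applying Theorem~\ref{thm:spherepackingbound} (phrased for unit-sphere packings) to an unnormalized $\Lambda$ through rescaling, and checking that the two factors of $V_n$ cancel against the density bound $2/(2^n V_n)$ to leave only the constant $2^{0.0990\ldots}$ in the exponent. Here one uses $V_n^{1/n}=\Theta(1/\sqrt{n})$, so the $V_n$-contributions are subexponential and are absorbed into the $o(1)$ in the exponent.
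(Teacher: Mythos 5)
Your proposal is correct and follows essentially the same route as the paper: combine Theorem~\ref{thm:spherepackingbound} with Lemma~\ref{lem:length} and the Kabatiansky--Levenshtein bound to obtain~\eqref{eq:ChiExponential}, then feed in the Minkowski--Hlawka/Siegel density lower bound (with the Markov-inequality variant for the high-probability half). No substantive difference from the paper's argument.
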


Note that the Minkowski-Hlawka lower bound has been improved, in such
a way that the constant $2$ in the numerator could be replaced with
some quasi-linear function in $n$, see \cite{Rogers1947a},
\cite{Ball1992a}, \cite{Vance2011a}, \cite{Venkatesh2013a}. Even
though any random lattice should be dense, and consequently should
have an exponential chromatic number, to date there is no efficient
way to construct such a lattice: The only algorithms for this purpose
run in exponential time with respect to the dimension (see
\cite{Moustrou2017a}).

However, explicit examples of subexponential growth are known.  For
$n\geq 3$ the cut polytope $\CUT_n$ (see \cite{Deza1997a}) is an
$n(n-1)/2$-dimensional polytope which is a Delaunay polytope of
lattice (see \cite{DezaGrishukhin1996}) . The vertex-edge graph of
$\CUT_n$ is the complete graph on $2^{n-1}$ vertices.  Thus we get an
infinite family of Delaunay polytope with chromatic number lower
bounded by $2^{O(\sqrt{n})}$.

We think it is an interesting question to construct explicit families
of lattices whose chromatic number grows exponentially with the
dimension.

\section{Spectral lower bounds}
\label{sec:spectralbound}

In this section we derive a lower bound for the chromatic number of a
lattice where we apply the generalization of Hoffman's bound as
developed by Bachoc, DeCorte, Oliveira, and Vallentin
\cite{Bachoc2014a}. In Section~\ref{ssec:setup} we recall some
terminology and facts from \cite{Bachoc2014a}. Then we compute the
spectral bound for the irreducible root lattices case by case in
Section~\ref{ssec:spectralbound-root}. Table~\ref{table:SpectralIrreducible}
summarizes the results obtained.

\subsection{Setup}
\label{ssec:setup}

For an $n$-dimensional lattice $\Lambda \subseteq \mathbb{R}^n$ define
the (complex) Hilbert space
\[
\ell^2(\Lambda) = \left\{ f \colon \Lambda \to \mathbb{C} : \sum_{u
    \in \Lambda}  |f(u)|^2 < \infty \right\}
\]
which has inner product
\[
\langle f, g \rangle = \sum_{u \in \Lambda} f(u) \overline{g(u)}.
\]
The \emph{convolution} of two elements $f, g \in \ell^2(\Lambda)$ is
$f * g \in \ell^2(\Lambda)$ defined by
\[
(f * g)(v) = \sum_{u \in \Lambda} f(u) g(v-u).
\]

Assume that $\mu \in \ell^2(\Lambda)$ is real-valued, that its support
is contained in $\Vor(\Lambda)$, and that it satisfies
$\mu(v) = \mu(-v)$ for all $v \in \Lambda$. Consider the convolution
operator
\[
A_{\mu} : \ell^2(\Lambda) \to \ell^2(\Lambda)
\]
defined by
\[
A_{\mu} f (v) = \sum_{u \in \Vor(\Lambda)} \mu(u) f(v-u) = (\mu * f)(v).
\]
In a certain sense, $A_{\mu}$ is a weighted adjacency operator of
~$\Cayley(\Lambda, \Vor(\Lambda))$. It is easy to verify that
$A_{\mu}$ is a bounded and self-adjoint operator. Its \emph{numerical
  range} is
\[
W(A_\mu) = \{\langle A_\mu f, f \rangle : f \in \ell^2(\Lambda), \langle
f, f \rangle = 1\}.
\]
The numerical range is known to be an interval in~$\mathbb{R}$. By
\[
\begin{split}
& m(A_{\mu}) = \inf\{\langle A_\mu f, f \rangle : f \in \ell^2(\Lambda),
\langle f, f \rangle = 1\},\\
& M(A_{\mu}) = \sup\{\langle A_\mu f, f \rangle : f \in \ell^2(\Lambda),
\langle f, f \rangle = 1\}
\end{split}
\]
we denote the endpoints of the interval $W(A_{\mu})$.

We say that a subset $I \subseteq \Lambda$ is an \emph{independent
  set} of the operator $A_{\mu}$ if $\langle A_{\mu} f, f \rangle = 0$
for each $f \in \ell^2(\Lambda)$ which vanishes outside of $I$. The
\emph{chromatic number} of~$A_{\mu}$ is the smallest number $k$ so
that one can partition $\Lambda$ into $k$ independent sets. By
\cite[Theorem 2.3]{Bachoc2014a} one has the following lower bound for
$\chi(A_\mu)$:
\[
1 - \frac{M(A_\mu)}{m(A_\mu)} \leq \chi(A_\mu).
\]
Since every independent set of $\Cayley(\Lambda, \Vor(\Lambda))$ is
also an independent set of the operator $A_\mu$ we see that
\[
\chi(A_\mu) \leq \chi(\Cayley(\Lambda, \Vor(\Lambda))) = \chi(\Lambda).
\]

Therefore, we are interested in determining the parameters $m(A_\mu)$
and $M(A_\mu)$ and in choosing $\mu$ so that the bound becomes as
large as possible.

For determining $m(A_\mu)$ and $M(A_\mu)$ for a given convolution
operator $A_\mu$ we apply standard facts from Fourier analysis, in
particular the Parseval identity, the theorem of Riesz-Fischer, and
the fact that the Fourier transform of a convolution is a product.

The only non-standard item here is that in standard texts on Fourier
analysis, see for example \cite{Dym1972a}, the role of primal and dual
spaces are interchanged. In order not to confuse the reader (and to
some extend not to confuse the authors) we consider a new
$n$-dimensional lattice $\Gamma$. In our context $\Gamma$ will play
the role of the dual lattice $\Lambda^*$.  When $\Gamma = \Lambda^*$,
then $\Gamma^* = (\Lambda^*)^* = \Lambda$.

Consider the Hilbert space of square-integrable $\Gamma$-periodic
functions
\[
L^2(\mathbb{R}^n / \Gamma) = \left\{F \colon \mathbb{R}^n / \Gamma
  \to \mathbb{C} : \int_{\mathbb{R}^n / \Gamma} |F(x)|^2 \, dx < \infty \right\}
\]
with inner product
\[
(F,G) = \int_{\mathbb{R}^n / \Gamma} F(x) \overline{G(x)}\, dx,
\]
where we normalize the Lebesgue measure $dx$ so that the
$n$-dimensional volume of a fundamental domain
$\vol(\mathbb{R}^n / \Gamma) $ equals one. The exponential functions
\[
E_v : x \mapsto e^{2\pi i v \cdot x} \quad \text{with} \quad v \in \Gamma^*
\]
form a complete orthonormal system for $L^2(\mathbb{R}^n /
\Gamma)$. We define the \emph{Fourier coefficient} of $F$ at $v$ by
\[
\widehat{F}(v) = (F, E_v) \quad \text{with} \quad v \in \Gamma^*.
\]
By Parseval's identity and by the Riesz-Fischer theorem the map
\[
\widehat{}\, \colon L^2(\mathbb{R}^n / \Gamma) \to \ell^2(\Gamma^*),
\quad \widehat{F}(v) = (F, E_v)
\]
is an isometry:
\[
(F,G) = \langle \widehat{F}, \widehat{G} \rangle \quad \text{for all }
F, G \in L^2(\mathbb{R}^n / \Gamma).
\]
We consider two functions $f, g \in \ell^2(\Gamma^*)$. By the isometry
of~$~\widehat{}~$ there are functions $F, G \in L^2(\mathbb{R}^n / \Gamma)$
with
\[
\widehat F = f \quad \text{and} \quad \widehat G = g.
\]
Furthermore,
\[
(f * g)(v) = \widehat{F \cdot G}(v).
\]

\smallskip

Back to the lattice $\Lambda = \Gamma^*$ and the convolution operator
$A_\mu$. For $\mu = \widehat{G}$,
$f = \widehat{F} \in \ell^2(\Lambda)$, we have
\[
G(x) = \sum_{u \in \Vor(\Lambda)} \mu(u) e^{2\pi i u
    \cdot x},
\]
and
\[
\begin{split}
\langle A_\mu f, f \rangle = \; & \langle \mu * f, f \rangle = \langle
\widehat{G \cdot F}, \widehat{F} \rangle = (GF,F)\\
= \; &
\int_{\mathbb{R}^n /
  \Lambda^*} \sum_{u \in \Vor(\Lambda)} \mu(u) e^{2\pi i u \cdot x}
|F(x)|^2 \, dx.
\end{split}
\]
Hence, by choosing two appropriate sequences (see \cite[Section
3.1]{Bachoc2014a})) approximating the corresponding Dirac measures, we see
\[
\begin{split}
m(A_\mu) = \inf\left\{ \sum_{u \in \Vor(\Lambda)} \mu(u) e^{2\pi i u
    \cdot x} : x \in \mathbb{R}^n / \Lambda^*\right\},\\
M(A_\mu) = \sup\left\{ \sum_{u \in \Vor(\Lambda)} \mu(u) e^{2\pi i u
    \cdot x} : x \in \mathbb{R}^n / \Lambda^*\right\}.\\
\end{split}
\]

We summarize our considerations in the following theorem.

\begin{theorem}
  Let $\Lambda \subseteq \mathbb{R}^n$ be an $n$-dimensional lattice.
  Suppose that $\mu \in \ell^2(\Lambda)$ is real-valued,
  $\mu(v) = \mu(-v)$ for all $v \in \Lambda$, and the support of $\mu$
  is contained in $\Vor(\Lambda)$. Then,
\[
\chi(\Lambda) \geq 1 - \frac{\sup\limits_{x \in \mathbb{R}^n / \Lambda^*}
\sum\limits_{u \in \Vor(\Lambda)} \mu(u) e^{2\pi i u \cdot x}}{\inf\limits_{x \in \mathbb{R}^n / \Lambda^*} \sum\limits_{u \in \Vor(\Lambda)} \mu(u) e^{2\pi i u
    \cdot x}}.
\]
\end{theorem}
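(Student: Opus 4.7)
The statement is essentially the culmination of the Fourier-analytic machinery developed in the preceding setup; my plan is to assemble the three ingredients that have been laid out there, and the proof divides naturally into those three steps.

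First, I would invoke the general spectral lower bound of \cite[Theorem 2.3]{Bachoc2014a} for the bounded self-adjoint convolution operator $A_\mu$ on $\ell^2(\Lambda)$, which gives
\[
\chi(A_\mu) \;\geq\; 1 - \frac{M(A_\mu)}{m(A_\mu)}.
\]
Since the support of $\mu$ is contained in $\Vor(\Lambda)$, any function $f$ supported on an independent set of $\Cayley(\Lambda,\Vor(\Lambda))$ satisfies $\langle A_\mu f, f\rangle = 0$ (no two elements of that set differ by a vector in $\supp \mu$), so every proper coloring of the Cayley graph partitions $\Lambda$ into independent sets of $A_\mu$. Hence $\chi(A_\mu) \leq \chi(\Lambda)$, and combining the two inequalities reduces the theorem to identifying $m(A_\mu)$ and $M(A_\mu)$.

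Second, I would carry out the identification via the Parseval--Riesz--Fischer isometry $\widehat{\phantom{f}}:L^2(\mathbb{R}^n/\Lambda^*)\to\ell^2(\Lambda)$. Writing $\mu=\widehat G$ and $f=\widehat F$ and using that the Fourier transform turns convolution into pointwise multiplication, the numerical range quantity becomes
\[
\langle A_\mu f, f\rangle \;=\; \bigl(GF,F\bigr) \;=\; \int_{\mathbb{R}^n/\Lambda^*} G(x)\,|F(x)|^2\,dx,
\]
where $G(x)=\sum_{u\in\Vor(\Lambda)}\mu(u)e^{2\pi i u\cdot x}$. The hypotheses that $\mu$ is real and $\mu(v)=\mu(-v)$ make $G$ real-valued, and the finiteness of $\Vor(\Lambda)$ makes $G$ a trigonometric polynomial, hence smooth on the compact torus $\mathbb{R}^n/\Lambda^*$.

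Third, since $|F|^2$ integrates to $\langle f,f\rangle=1$, the integral above is a weighted average of $G$ against a probability density, so it is trivially bounded above by $\sup_x G(x)$ and below by $\inf_x G(x)$. Conversely, picking $F_k\in L^2(\mathbb{R}^n/\Lambda^*)$ with $|F_k|^2$ approximating a Dirac mass at a point where $G$ approaches its supremum (respectively infimum) and using continuity of $G$ shows that the extrema are in fact attained in $W(A_\mu)$; this is the standard approximation used in \cite[Section 3.1]{Bachoc2014a}. Thus $M(A_\mu)=\sup_x G(x)$ and $m(A_\mu)=\inf_x G(x)$, and plugging these expressions into the spectral bound from the first step yields the claim.

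The only nontrivial point is the approximation argument used to realize the sup and inf as limits of elements of the numerical range, but because $G$ is smooth on a compact torus this is entirely routine; all other steps are direct applications of results already assembled in Section~\ref{ssec:setup}.
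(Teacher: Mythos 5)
Your proposal is correct and follows exactly the route the paper takes in Section~\ref{ssec:setup}: the Bachoc--DeCorte--Oliveira--Vallentin spectral bound combined with $\chi(A_\mu)\leq\chi(\Lambda)$, the Parseval/Riesz--Fischer isometry turning convolution into multiplication, and the Dirac-approximation argument identifying $m(A_\mu)$ and $M(A_\mu)$ with the extrema of the trigonometric polynomial. The paper states the theorem as a direct summary of this chain of reasoning, so there is nothing to add.
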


If we uniformly choose
\[
\mu(v) =
\begin{cases}
1/|\Vor(\Lambda)|, &  \text{for } v \in \Vor(\Lambda),\\
0, & \text{otherwise,}
\end{cases}
\]
then the bound in the previous theorem simplifies to the following
generalization of the Hoffman bound for infinite graphs:

\begin{corollary}\label{cor:Hoffman}
Let $\Lambda \subseteq \mathbb{R}^n$ be an $n$-dimensional
lattice. Then,
\[
\chi(\Lambda) \geq 1 - \left(\inf\limits_{x \in \mathbb{R}^n / \Lambda^*} \frac{1}{|\Vor(\Lambda)|} \sum_{u \in \Vor(\Lambda)}
  e^{2\pi i u \cdot x}\right)^{-1}.
\]
\end{corollary}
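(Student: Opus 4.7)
The plan is to specialize the preceding theorem to the uniform choice of weights
\[
\mu(v) = \begin{cases} 1/|\Vor(\Lambda)|, & v \in \Vor(\Lambda),\\ 0, & \text{otherwise,}\end{cases}
\]
and evaluate the supremum in the resulting bound explicitly. I would first check the hypotheses of the theorem: $\mu$ is obviously real-valued and supported in $\Vor(\Lambda)$, while the symmetry $\mu(v)=\mu(-v)$ follows from the fact that $\Vor(\Lambda)$ is symmetric under $u\mapsto -u$ (immediate from the characterization \eqref{eq:Voronoi-characterization}, since that condition is preserved by negation).

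Next I would compute the supremum
\[
M(A_\mu)=\sup_{x\in\mathbb{R}^n/\Lambda^*}\frac{1}{|\Vor(\Lambda)|}\sum_{u\in\Vor(\Lambda)} e^{2\pi i u\cdot x}.
\]
By the symmetry $u\mapsto -u$, the sum is real, and by the triangle inequality each partial summand has modulus at most $1$, so the entire expression is bounded above by $1$. Equality is attained at $x=0$ (any representative of $0\in \mathbb{R}^n/\Lambda^*$), where every exponential term equals $1$ and the normalized sum is exactly $1$. Thus $M(A_\mu)=1$.

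Finally I would substitute $M(A_\mu)=1$ into the lower bound
\[
\chi(\Lambda) \geq 1 - \frac{M(A_\mu)}{m(A_\mu)}
\]
from the preceding theorem, which immediately yields the claimed formula
\[
\chi(\Lambda) \geq 1 - \left(\inf_{x\in\mathbb{R}^n/\Lambda^*} \frac{1}{|\Vor(\Lambda)|}\sum_{u\in\Vor(\Lambda)} e^{2\pi i u\cdot x}\right)^{-1}.
\]
There is no real obstacle here: all the analytic content (the Fourier-theoretic expressions for $m(A_\mu)$ and $M(A_\mu)$, and the spectral bound $1 - M(A_\mu)/m(A_\mu)\le \chi(A_\mu)\le \chi(\Lambda)$) has already been established. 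The only small observation worth flagging is that one must normalize correctly so that the numerator of $M(A_\mu)$ is exactly $1$; this is the role of dividing by $|\Vor(\Lambda)|$, and it cancels neatly in the final displayed inequality so that the bound depends only on the infimum in the denominator.
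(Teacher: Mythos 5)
Your proposal is correct and matches the paper's approach: specialize the preceding theorem to the uniform weight $\mu$, verify the symmetry hypothesis, and observe that the supremum in the numerator equals $1$ (attained at $x=0$, with $1$ as an upper bound by the triangle inequality), which reduces the bound to the stated form.
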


\subsection{Computing the spectral bound for irreducible root lattices}
\label{ssec:spectralbound-root}

In this section we compute the lower bound given by
Corollary~\ref{cor:Hoffman} for each of the irreducible root lattices.
By the classification of Witt these are the families of lattices
$\mathsf{A}_n$, $\mathsf{D}_n$ and the three sporadic lattices
$\mathsf{E}_6$, $\mathsf{E}_7$, and $\mathsf{E}_8$.

As recalled in Section~\ref{ssec:rootlattices}, the set
$\Vor(\Lambda)$ of strict Voronoi vectors for a root lattice $\Lambda$
is simply its set of roots.  Now the latter constitutes a \emph{root
  system} (sometimes known as a ``crystallographic'' root system, but
these are the only ones which we will consider):
see~\ref{par:rootsystems}; the root systems are themselves classified,
\cite[\S11]{Humphreys1972}, and the ones which arise from root
lattices are known as ``simply laced'' or ``A-D-E'' root systems.  (We
will review the simply laced irreducible root systems below along with
the corresponding lattices.)

Computing the lower bound of Corollary~\ref{cor:Hoffman} for an
irreducible root lattice is therefore equivalent to finding the
smallest value attained by the Fourier transform of a simply laced
irreducible root system. Here, a finite set
$\Phi \subseteq \mathbb{R}^n$ defines the function
$\mathscr{F}_\Phi\colon x \mapsto \sum_{u \in \Phi} e^{2\pi i u \cdot
  x}$
on $\mathbb{R}^n$ which is the Fourier transform of the sum of delta
measures concentrated on the elements of~$\Phi$. Let us emphasize that
since $\Phi$ is a (crystallographic!) root system, this function is,
in fact, a trigonometric polynomial; and since $\Phi$ is symmetric
($\Phi = -\Phi$), it is real.

Now this reformulation affords a link with representation theory (the
required facts of which are recalled in Appendix~\ref{sec:liegroups}).
Namely, if $\Phi$ is a (reduced, but not necessarily simply laced)
root system of rank~$n$ then the function $n + \mathscr{F}_\Phi$ is
``essentially'' the character of the adjoint representation of the ---
say, simply connected --- compact real Lie group $G_\Phi$ associated
with~$\Phi$ (namely $\mathrm{SL}_{n+1}$ in the case of $\mathsf{A}_n$,
or $\mathrm{Spin}_{2n}$ in the case of $\mathsf{D}_n$, or one of the
exceptional Lie groups $\mathsf{E}_6,\mathsf{E}_7,\mathsf{E}_8$ in the
case of the correspondingly named
$\mathsf{E}_6,\mathsf{E}_7,\mathsf{E}_8$); the precise statement and
explanation of why the two problems are equivalent is given in
Proposition~\ref{prop:adjcharvals}.

The problem of computing our spectral lower bound is therefore
essentially that of computing the least value attained by the adjoint
character of a simple compact real Lie group of type~A-D-E.  The
values in question have been considered and computed by Serre in
\cite[Theorem 3']{Serre2004a}: In Table
\ref{table:SpectralIrreducible} we state Serre's result in the form in
which it is useful for the main part of the paper.

\begin{table}[h]
\begin{tabular}{ccl}
\textit{lattice} & \textit{spectral lower bound}\\[0.5ex]
\hline
\\[-2ex]
$\mathsf{A}_n$ & $n+1$ & Thm. \ref{thm:lowera}\\
$\mathsf{D}_n$ & $n$, when $n$ even & Thm. \ref{thm:lowerd}\\
& $n+1$, when $n$ odd & Thm. \ref{thm:lowerd}\\
$\mathsf{E}_6$ & $9$ & Thm. \ref{Theorem_CritValues_E6}, Sec. \ref{ssec:e6Schlafi}\\
$\mathsf{E}_7$ & $10$ & Thm. \ref{Theorem_CritValues_E7}, Sec. \ref{ssec:e7e8sos}\\
$\mathsf{E}_8$ & $16$ & Thm. \ref{Theorem_CritValues_E8}, Sec. \ref{ssec:e7e8sos}\\[1ex]
\end{tabular}
\caption{The spectral lower bounds on the chromatic number for the
  irreducible root lattices.}
\label{table:SpectralIrreducible}
\end{table}

Again, the equivalence of the result as stated here with that as
stated in Serre's note is provided by
Proposition~\ref{prop:adjcharvals}.

Serre's note does not contain a proof of the stated
result\footnote{Serre writes: ``The classical groups are easy enough,
  but $\mathsf{F}_4$, $\mathsf{E}_6$, $\mathsf{E}_7$ and
  $\mathsf{E}_8$ are not (especially $\mathsf{E}_6$, which I owe to
  Alain Connes). I hope there is a better proof.''}. In this paper we
provide one for the A-D-E case. We treat $\mathsf{A}_n$ and
$\mathsf{D}_n$ in Theorem \ref{thm:lowera} and
Theorem~\ref{thm:lowerd} below, and we defer the case $\mathsf{E}_n$
to Appendix~\ref{sec:serresproof}. For $\mathsf{E}_7$ and
$\mathsf{E}_8$ we suggest an alternative proof technique based on sums
of squares and semidefinite optimization in
Section~\ref{ssec:e7e8sos}. Using the link to the chromatic number of
lattices, the case $\mathsf{E}_6$ turns out to be the easiest of the
$\mathsf{E}_n$ cases. We treat it in Section~\ref{ssec:e6Schlafi}.

\subsubsection{$\mathsf{A}_n$}

The irreducible root lattice $\mathsf{A}_n$ has $n(n+1)$ roots
\[
R(\mathsf{A}_n) = \{e_r - e_s : 0 \leq r,s \leq n, r \neq s\},
\]
where $e_r$ denotes the $r$-th standard unit vector in $\mathbb{R}^{n+1}$.

\begin{theorem}\label{thm:lowera}
The critical values of ${\mathcal F}_{\mathsf{A}_n}$ are $n(n+1)$ and $-(n+1)$.
In particular, $\chi(\mathsf{A}_n) \geq n+1$.
\end{theorem}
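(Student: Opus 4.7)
The plan is to recognize $\mathcal{F}_{\mathsf{A}_n}$ as a modulus-squared of a one-variable trigonometric sum, which makes both extremal values transparent.

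First, I would complete the sum over ordered pairs $(r,s)$ by adding in the diagonal $r=s$:
\[
\mathcal{F}_{\mathsf{A}_n}(x) = \sum_{r\ne s} e^{2\pi i(x_r - x_s)} = \left(\sum_{r=0}^n e^{2\pi i x_r}\right)\!\overline{\left(\sum_{s=0}^n e^{2\pi i x_s}\right)} - (n+1) = |P(x)|^2 - (n+1),
\]
where $P(x) = \sum_{r=0}^n e^{2\pi i x_r}$. Note that although $\mathsf{A}_n$ lives in the hyperplane $\Pi$, adding a constant $(t,\ldots,t)$ to $x$ only multiplies $P(x)$ by a unit scalar and so leaves $|P(x)|^2$ unchanged; this legitimately lets us treat the $n+1$ quantities $z_r := e^{2\pi i x_r}$ as independent points on the unit circle in $\mathbb{C}$.

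Next I would read off the two extremal values. Since each $|z_r|=1$, the triangle inequality gives $0 \le |P(x)|^2 \le (n+1)^2$. The upper bound is attained at $x=0$ (all $z_r=1$), yielding $\mathcal{F}_{\mathsf{A}_n}(0) = n(n+1)$. For the lower bound, choosing $x_r = r/(n+1)$ makes the $z_r$ the $(n+1)$-th roots of unity, whose sum vanishes, so $\mathcal{F}_{\mathsf{A}_n}(x) = -(n+1)$ is attained. Hence the critical (i.e., extremal) values are exactly $n(n+1)$ and $-(n+1)$, as claimed.

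Finally, I would feed these into Corollary~\ref{cor:Hoffman}. With $|\Vor(\mathsf{A}_n)| = |R(\mathsf{A}_n)| = n(n+1)$, the infimum of the normalized Fourier sum equals $-(n+1)/(n(n+1)) = -1/n$, so
\[
\chi(\mathsf{A}_n) \;\geq\; 1 - \left(-\frac{1}{n}\right)^{-1} \;=\; 1 + n.
\]

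There is no real obstacle here: once the sum over $r\ne s$ is recognised as $|P|^2 - (n+1)$, both extrema are immediate from $|z_r|=1$, and attainment of the minimum is witnessed by the roots of unity. The only subtlety worth flagging is the remark that $P(x)$ depends on an $(n+1)$-tuple, not merely on the projection to $\Pi$, but since only $|P|^2$ enters, this ambiguity is harmless.
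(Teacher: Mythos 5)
Your proof is correct and essentially identical to the paper's: both rewrite $\mathcal{F}_{\mathsf{A}_n}(x)$ as $\left|\sum_r z_r\right|^2-(n+1)$, identify $0$ and $(n+1)^2$ as the extreme values of $\left|\sum_r z_r\right|^2$ (attained at the $(n+1)$-th roots of unity and at $z_0=\cdots=z_n=1$ respectively), and then feed the infimum into Corollary~\ref{cor:Hoffman} to obtain $\chi(\mathsf{A}_n)\geq n+1$. Your observation that the constraint $z_0\cdots z_n=1$ may be dropped because $|P|^2$ is invariant under multiplying all $z_r$ by a common unit scalar is exactly the remark the paper records in its footnote, so the arguments coincide even on that point.
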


\begin{proof}
Given $x \in \mathbb{R}^{n+1}$ such that $x_0 + \cdots + x_n = 0$, we
define $z_0,\ldots,z_n$ by $z_r = e^{2\pi i x_r}$, so that $z_0\cdots z_n = 1$.
The sum
\[
S = {\mathcal F}_{\mathsf{A}_n}(x) = \sum_{u \in R(\mathsf{A}_n)}
e^{2\pi i u \cdot x}
\]
is equal to
\begin{equation*}
\begin{array}{rcl}
  S
&=& \sum\limits_{r\neq s} z_r/z_s\\
&=& \sum\limits_{r,s} z_r/z_s - (n+1)\\
&=& \left(\sum\limits_r z_r\right) \left(\sum\limits_s 1/z_s\right) - (n+1)\\
&=& \left(\sum\limits_r z_r\right) \overline{\left(\sum\limits_s z_s\right)} - (n+1)\\
&=& \left\vert \sum\limits_r z_r\right\vert^2 - (n+1).
\end{array}
\end{equation*}
Clearly $\left\vert \sum_r z_r\right\vert^2$ critical values are $0$
(when $\sum_i z_i=0$) and $(n+1)^2$ (when all $z_i$ are
identical). The critical values can be obtained when $z_0\cdots z_n =
1$, for example by the $(n+1)$-th roots of unity\footnote{
For example we have $z_r = e^{2 \pi i/(n+1)}$ for all $r$, by
letting $x_1 = \cdots = x_n = 1/(n+1)$ and $x_0 = 1/(n+1) - 1 =
-n/(n+1)$. More explicitly, we have a bijection 
between the set of all $x \in \mathbb{R}^{n+1}$ with $x_0 + \cdots + x_n = 0$,
modulo the lattice dual to $\mathsf{A}_n$ and
the set of complex vectors $(z_0, \ldots, z_n)$ which all lie on the unit
	circle, and whose product equals 1, modulo 
	$(\zeta, \ldots,\zeta)$ where $\zeta$ is some $(n+1)$-th root of unity
obtained by taking $(x_0, \ldots, x_n)$ to $z_r = e^{2 \pi i x_r}$
.}
or by $z_0 = \cdots = z_n = 1$.
Then, the lower bound on $\chi(\mathsf{A}_n)$ follows from
Corollary~\ref{cor:Hoffman}.
\end{proof}

\subsubsection{$\mathsf{D}_n$}

The irreducible root lattice $\mathsf{D}_n$ has $2n(n-1)$ roots
\[
R(\mathsf{D}_n) = \{\pm (e_r + e_s) : 1 \leq r,s \leq n, r \neq s\} \cup \{\pm (e_r - e_s) : 1 \leq r,s \leq n, r \neq s\}.
\]

\begin{theorem}\label{thm:lowerd}
The critical values of ${\mathcal F}_{\mathsf{D}_n}$ are:
\begin{equation*}
\left\{\begin{array}{c}
-2(n-1) \mbox{~if~}\\
n\mbox{~is~odd}
\end{array}\right\}\cup
\left\{\begin{array}{c}
2\frac{(n_1 - n_{-1})^2}{1 - n_o} - 2n_1 - 2n_{-1}\mbox{~where~}n_1, n_{-1},n_o\in \ZZ_+,\\
n_1 + n_{-1} + n_o = n \mbox{~with~} n_o = 0 \mbox{~or~}\left\vert \frac{n_1 - n_{-1}}{n_o - 1}\right\vert < 1
\end{array}\right\}.
\end{equation*}
In particular,
\[
\inf_{x\in \mathbb{R}^n} {\mathcal F}_{\mathsf{D}_n}(x) =
\begin{cases}
-2n, & \text{$n$ even},\\
-2(n-1), & \text{$n$ odd},
\end{cases}
\]
and $\chi(\mathsf{D}_n) \geq n$ for even $n$, and
$\chi(\mathsf{D}_n) \geq n+1$ for odd~$n$.
\end{theorem}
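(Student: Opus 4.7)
The plan is to reduce the computation to a polynomial optimization in the compact variables $t_r = 2\cos(2\pi x_r) \in [-2,2]$, classify the critical points by a three-way partition of the coordinates, and then minimize. First I would observe that, writing $z_r = e^{2\pi i x_r}$, the four roots $\pm e_r \pm e_s$ (for fixed $r<s$) jointly contribute
\[
\sum_{\varepsilon_1,\varepsilon_2\in\{\pm 1\}} e^{2\pi i(\varepsilon_1 x_r + \varepsilon_2 x_s)} = (z_r + z_r^{-1})(z_s + z_s^{-1}) = t_r t_s.
\]
Summing over pairs gives the symmetric quadratic
\[
\mathcal{F}_{\mathsf{D}_n}(x) \;=\; \sum_{r<s} t_r t_s \;=\; \tfrac{1}{2}\bigl(T^2 - \textstyle\sum_r t_r^2\bigr), \qquad T := \sum_r t_r.
\]

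Next I would find the critical points of $x \mapsto \mathcal{F}_{\mathsf{D}_n}(x)$ via the chain rule. Since $\partial t_r/\partial x_r = -4\pi\sin(2\pi x_r)$, each $x_r$-critical equation reads: either $\sin(2\pi x_r) = 0$ (so $t_r = \pm 2$, the boundary of the $t$-box) or $\partial \mathcal{F}/\partial t_r = T - t_r = 0$ (interior). Partition $\{1,\dots,n\} = I_1 \sqcup I_{-1} \sqcup I_o$ according as $t_r = 2$, $t_r = -2$, or $t_r$ satisfies the interior condition; write $n_1, n_{-1}, n_o$ for the cardinalities. Coordinates in $I_o$ share a common value $t := T$, so combining $T = 2n_1 - 2n_{-1} + n_o t$ with $t = T$ yields $(1-n_o)\,t = 2(n_1-n_{-1})$. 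Substituting back gives the critical value $2(n_1-n_{-1})^2/(1-n_o) - 2n_1 - 2n_{-1}$, and the box constraint $|t| \leq 2$ becomes $|n_1 - n_{-1}| \leq |n_o - 1|$. Strict inequality is imposed because at equality one has $|t| = 2$, so every $r\in I_o$ satisfies $\sin(2\pi x_r)=0$ automatically and should be reabsorbed into $I_{\pm 1}$ (producing the same critical point with smaller $n_o$).

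The case $n_o = 1$ is degenerate: the equation forces $n_1 = n_{-1}$ (hence $n$ odd) and leaves $t$ free, giving a one-parameter family of critical points. A direct substitution shows that $\mathcal{F}_{\mathsf{D}_n}$ is constant equal to $-2n_1 - 2n_{-1} = -2(n-1)$ along this family; this accounts for the separately listed exceptional critical value.

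Finally I would minimize. For $n_o = 0$ the critical value reduces to $2(n_1-n_{-1})^2 - 2n$: since $n_1 + n_{-1} = n$ fixes the parity of $n_1 - n_{-1}$, the minimum equals $-2n$ for $n$ even (at $n_1 = n_{-1}$) and $-2(n-1)$ for $n$ odd (at $|n_1-n_{-1}| = 1$). For $n_o = k \geq 2$, using the identity $(k-2)^2/(k-1) = k - 3 + \tfrac{1}{k-1}$, an elementary estimate of the critical value over admissible $(n_1, n_{-1})$ with $|n_1 - n_{-1}| < k - 1$ shows the result strictly exceeds $-2(n-1)$, so these configurations do not improve the minimum. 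Combining with the trivial maximum $|R(\mathsf{D}_n)| = 2n(n-1)$ attained at $x = 0$, Corollary~\ref{cor:Hoffman} gives $\chi(\mathsf{D}_n) \geq 1 - 2n(n-1)/m$, which evaluates to $n$ when $n$ is even and to $n+1$ when $n$ is odd. The main obstacle will be the careful bookkeeping of the boundary/interior dichotomy---in particular justifying the strict inequality $|n_1 - n_{-1}| < |n_o - 1|$ as the correct non-redundant parametrization and handling the degenerate $n_o = 1$ family separately.
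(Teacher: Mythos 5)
Your proposal is correct and follows essentially the same route as the paper's proof: you rewrite $\mathcal{F}_{\mathsf{D}_n}$ as a symmetric quadratic in the cosines (you use $t_r = 2\cos(2\pi x_r)$ where the paper uses $\cos(2\pi x_r)$ directly, but $T = 2W$ makes the two computations identical), derive the same factorization of the gradient, partition coordinates by $t_r = 2$, $t_r = -2$, or the interior condition $t_r = T$, solve for $t$ and substitute, and split off $n_o = 1$ as the degenerate family giving value $-2(n-1)$. Your additional remarks justifying the strict inequality $|n_1 - n_{-1}| < |n_o - 1|$ and noting that $n_o = 1$ forces $n_1 = n_{-1}$ (hence $n$ odd) make explicit two points the paper leaves more terse, but they do not change the structure of the argument.
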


\begin{proof}
By the symmetries of the function we can restrict ourselves to $0\leq x_i\leq 1/2$.
We consider the sum
\begin{eqnarray*}
S & = & \sum_{u \in R(\mathsf{D}_n)} e^{2\pi i u \cdot x}\\
& = & 2 \sum_{1 \leq r < s \leq n} (\cos(2\pi x_r + 2\pi x_s) +
      \cos(2\pi x_r - 2\pi x_s))\\
& = & 4 \sum_{1 \leq r < s \leq n} \cos(2\pi x_r) \cos(2\pi x_s)\\
& = & 2 \left(W^2 - \sum_{r=1}^n (\cos(2\pi x_r))^2\right).
\end{eqnarray*}
where $W = \sum_{r=1}^n \cos(2\pi x_r)$. To find the critical values
we compute the gradient of~$S$ (as a function of the~$x_r$), which is
\[
\frac{\partial S}{\partial x_r} = -8 \pi \sin(2\pi x_r) \left(W - \cos(2\pi x_r) \right), \quad r = 1, \ldots, n.
\]
Let $x$ be a critical point. Define $S_{1}$, $S_{-1}$ and $S_o$ the set of $i\in \{1, \dots, n\}$ such that $x_i=0$,
$x_i=1/2$ and $0 < x_i < 1/2$ so that $\cos(2\pi x_i) = 1$ or
$\cos(2\pi x_i) = -1$ or $\cos(2\pi x_i) \in (-1,1)$.

For $i\in S_o$ we have $W = \cos(2\pi x_i)$. Thus,
$\vert W \vert < 1$ if $S_o \not= \emptyset$.

We define $n_1 = \vert S_1\vert$, $n_{-1} = \vert S_{-1}\vert$ and
$n_o = \vert S_o\vert$ and we have $n_1 + n_{-1} + n_o = n$, and
\begin{equation}
\label{eq:W}
W = \cos(0) n_1 + \cos(\pi) n_{-1} + n_o W.
\end{equation}

If $n_o=1$ we have
\[
S = 2\left(W^2 - W^2 - \sum_{r \in S_{-1} \cup S_1} (\cos(2\pi
  x_r))^2\right) = -2(n-1).
\]

If $n_o\not= 1$ then the equation for $W$ \eqref{eq:W} gives
\[
W = \frac{n_1 - n_{-1}}{1 - n_o}.
\]
The value of the function is then
\begin{equation*}
\begin{array}{rcl}
S
&=& 2 \left( W^2 - (n_1 + n_{-1} + n_o W^2)\right)\\
&=& 2 \frac{(n_1 - n_{-1})^2}{1 - n_o} - 2n_1 - 2n_{-1}.
\end{array}
\end{equation*}
The lower bound on $\chi(\mathsf{D}_n)$ then follows from
Corollary~\ref{cor:Hoffman}.
\end{proof}

\subsubsection{$\mathsf{E}_7$, $\mathsf{E}_8$ and sums of squares}
\label{ssec:e7e8sos}

We start by giving an alternative construction of the $\mathsf{E}_8$ lattice which is
based on lifting the (extended) \textit{Hamming code} $\mathcal{H}_8$, which is
the vector space over the finite field $\mathbb{F}_2$ (consisting of
the elements $0$ and $1$) spanned by the rows of the matrix
\[
G =
\left(
\begin{array}{l|l}
1000 & 0111\\
0100 & 1011\\
0010 & 1101\\
0001 & 1110\\
\end{array}
\right)
\in \mathbb{F}_2^{4 \times 8},
\]
It consists of $2^4 = 16$ code words:
\[
\begin{array}{lllll}
0000|0000 & 1000|0111 & 1100|1100 & 0111|1000 & 1111|1111\\
                  & 0100|1011 & 1010|1010 & 1011|0100 &\\
                  & 0010|1101 & 1001|1001 & 1101|0010 &\\
                  & 0001|1110 & 0110|0110 & 1110|0001 &\\
                  &                   & 0101|0101 & & \\
                  &                   & 0011|0011 & &\\
\end{array}
\]
We can define the lattice $\mathsf{E}_8$ by the following lifting
construction (which is usually called \textit{Construction A}):
\[
\mathsf{E}_8 = \left\{\frac{1}{\sqrt{2}} x : x \in \mathbb{Z}^8 ,\; x \text{ mod
    } 2 \in \mathcal{H}_8\right\}.
\]
Now it is immediate to see that $\mathsf{E}_8$ has $240$ shortest
(nonzero) vectors:
\[
\begin{array}{ll}
16 = 2^4 \text{ vectors: } & \pm \sqrt{2} e_i, \, i = 1, \ldots, 8 \\
224 = 2^4 \cdot 14 \text{ vectors: } & \frac{1}{\sqrt{2}} \sum_{j=1}^8 (\pm c_j)
                        e_j,  \; c \in \mathcal{H}_8 \text{ and }
                        \wt(c) = 4,
\end{array}
\]
where $e_1, \ldots, e_8$ are the standard basis vectors of
$\mathbb{R}^8$ and where $\wt(c) = |\{i : c_i \neq 0\}|$ denotes
the Hamming weight of a code word $c$.

Observe that the lower bound $\chi(\mathsf{E}_8) \geq 16$ is implied
trough the spectral bound by the following inequality
(Theorem~\ref{Theorem_CritValues_E8} gives a stronger result by
providing a complete list of all critical values)
\[
S(x) = \sum_{i=1}^8 2 \cos(2\pi \sqrt{2} x_i) + \sum_{c \in
  \mathcal{H}_8, \wt(c) = 4} \sum_{\pm} \cos\left(2 \pi \frac{1}{\sqrt{2}} \sum_{j=1}^8
  (\pm c_j) x_j\right) + 16 \geq 0.
\]
for all $x \in \mathbb{R}^8$. 

To simplify the formula we apply a change of variables by setting
$T(x) = S(\frac{\sqrt{2}}{2\pi} x)$. Then,
\[
T(x) = \sum_{i=1}^8 2 \cos(2 x_i) + \sum_{c \in
  \mathcal{H}_8, \wt(c) = 4} \sum_{\pm} \cos\left(\sum_{j=1}^8
  (\pm c_j) x_j\right) + 16 .
\]
Applying the cosine addition formula multiple times we get
\[
T(x) = 4 \sum_{i=1}^8 \cos(x_i)^2 + \sum_{c \in
  \mathcal{H}_8, \wt(c) = 4} 16 \prod_{j=1}^8 \cos(c_j x_j).
\]
Function $T$ is globally nonnegative if and only if the polynomial
\[
p(t) = \sum_{i=1}^8 t_i^2 + 4\sum_{c \in \mathcal{H}_8, \wt(c) = 4, \supp c =
\{i,j,k,l\}} t_i t_j t_k t_l
\]
is nonnegative on the cube $t \in [-1,+1]^8$. This nonnegativity can
be verified algorithmically. By
\[
\begin{split}
\Sigma_{n,d} = \{p \in \mathbb{R}[x_1, \ldots, x_n] : & \deg p \leq
d, \; \text{there are } p_1, \ldots, p_m \in \mathbb{R}[x_1, \ldots, x_n] :\\
& p = p_1^2+ \cdots + p_m^2\}
\end{split}
\]
denote the cone of real polynomials in $n$ indeterminates of degree at
most $d$. One can verify membership in this cone by using semidefinite
optimization, see for example \cite{Lasserre2015a}. We checked
numerically (up to machine precision)
that there are polynomials
\[
q \in \Sigma_{8,8}, q_1, \ldots, q_8 \in \Sigma_{8,6}
\]
so that the following identity holds true:
\[
p(t) = q(t) + \sum_{i=1}^8 (1-t_i^2) q_i(t).
\]
It is interesting to observe that using smaller degree did not work.

One can easily modify this proof technique for the case $\mathsf{E}_7$
to show $\chi(\mathsf{E}_7) \geq 10$. To define $\mathsf{E}_7$ we
apply Construction A on the $[7,3,4]$ code $\mathcal{H}_7^*$, which
one obtains from $\mathcal{H}_8$ by deleting its first
coordinate. Here one shows that the polynomial
\[
\sum_{i=1}^7 t_i^2 + 4\sum_{c \in \mathcal{H}_7^*, \wt(c) = 4, \supp c =
\{i,j,k,l\}} t_i t_j t_k t_l
\]
is nonnegative on the cube $t \in [-1,+1]^7$ again using sum of squares.

\subsubsection{$\mathsf{E}_6$}
\label{ssec:e6Schlafi}

The $\mathsf{E}_6$ lattice we can handle without computer as follows:
In the proof of Theorem \ref{thm:e} we shall color $\mathsf{E}_6$ with
nine colors. On the other hand, the Schl\"afli polytope is a Delaunay
polytope of $\mathsf{E}_6$ whose vertex-edge graph---the Schl\"afli
graph on 27 vertices having 216 edges---is a finite subgraph of
$\Cayley(\mathsf{E}_6, \Vor(\mathsf{E}_6))$. It is known, see for
example \cite[Chapter 8, page 55]{Biggs1974a} that the Hoffman bound
of the Schl\"afli graph equals nine. It is also known, see
\cite[Section 10.1]{Bachoc2009a}, that the Hoffman bound of an
infinite edge transitive graph is at least the Hoffman bound of any of
its finite subgraphs. Hence, the spectral bound of $\mathsf{E}_6$
equals nine.

\section{The chromatic number of irreducible root lattices and their duals}
\label{sec:coloring-root}

In this section, we complete our study of the chromatic number of
irreducible root lattices and their duals.  The knowledge that we use
about Delaunay polytopes of root lattices can be found in
\cite[Section 14.3]{Deza1997a}.  Our claims regarding the sublattices
that we consider and the colorings of certain small graphs can be
conveniently checked with computer assistance, for example by using
{\tt Magma} \cite{Magma} or {\tt Polyhedral} \cite{Polyhedral}.

When we cannot directly compute the chromatic number of a graph, we
apply other methods, computationally easier, in order to get lower and
upper bounds.  A lower bound for the chromatic number of a graph
$G = (V,E)$ is given by its \textit{fractional chromatic number}: Denote
by $\mathcal{I}_G$ the set of all independent sets of $G$. The
fractional chromatic number of $G$ is the solution of the following
linear program:

\[
\min\left\{\sum_{I \in \mathcal{I}_G} \lambda_I : \lambda_I \in \mathbb{R}_{\geq 0} \text{ for } I \in \mathcal{I}_G, \sum_{I \in \mathcal{I}_G \text{ with } v\in I } \lambda_I \geq 1 \text{ for } v \in V \right\}.
\]
If $G$ affords symmetries, one can use them to reduce the number of
variables of this linear program.  Regarding upper bounds, given a
number $k$ of colors and a graph $G$, proving the existence of a
$k$-coloring of $G$ can be turned into a satisfiability problem, that
can be solved for instance by using {\tt Minisat} \cite{Minisat}.

\subsection{$\mathsf{D}_n$ and its dual}

The half cube $\frac{1}{2} H_n$, sometimes also called the parity polytope, is defined as
\begin{equation*}
\frac{1}{2} H_n = \conv\left\{ x\in \{0,1\}^n : \sum_i x_i = 0 \bmod 2 \right\} .
\end{equation*}

It is one of the two Delaunay polytopes of the root lattice
$\mathsf{D}_n$.

\begin{theorem}\label{thm:d}
  For all $n \geq 4$ we have
  $\chi(\mathsf{D}_n) = \chi(\frac{1}{2} H_n)$, where we consider the
  vertex-edge graph of the half cube.
\end{theorem}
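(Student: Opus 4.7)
The plan is to prove the two inequalities separately, each by a short appeal to material already developed in the paper.

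For the lower bound $\chi(\mathsf{D}_n) \geq \chi(\tfrac{1}{2}H_n)$, I invoke the lemma from Section~\ref{ssec:easy-lower-bounds} which states that the chromatic number of a lattice is at least the chromatic number of the vertex-edge graph of any of its Delaunay polytopes. Since $\tfrac{1}{2}H_n$ is a Delaunay polytope of $\mathsf{D}_n$ (as recorded in the paragraph preceding the theorem, using the reference \cite{Deza1997a}), this is immediate.

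For the upper bound $\chi(\mathsf{D}_n) \leq \chi(\tfrac{1}{2}H_n)$, I apply Lemma~\ref{lem:upperboundquotient} to the sublattice $\Lambda' = 2\mathbb{Z}^n \subseteq \mathsf{D}_n$. The hypothesis $\Lambda'\cap \Vor(\mathsf{D}_n)=\emptyset$ holds, since $\Vor(\mathsf{D}_n)$ is the set of roots $\pm e_i\pm e_j$ of squared norm $2$, whereas any nonzero element of $2\mathbb{Z}^n$ has squared norm at least $4$. Hence $\chi(\mathsf{D}_n)\leq \chi(G)$, where $G$ is the quotient graph on $\mathsf{D}_n/2\mathbb{Z}^n$ defined in that lemma.

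The remaining and central step is to identify $G$ with the vertex-edge graph of $\tfrac{1}{2}H_n$. Every coset in $\mathsf{D}_n/2\mathbb{Z}^n$ has a unique representative in $\{0,1\}^n$; since $\mathsf{D}_n$ is cut out of $\mathbb{Z}^n$ by the condition $\sum_k x_k\equiv 0\pmod 2$, these representatives are exactly the even-parity binary strings, i.e.\ the vertex set of $\tfrac{1}{2}H_n$. For edges, observe that every root $\pm e_i\pm e_j$ reduces modulo $2\mathbb{Z}^n$ to $e_i+e_j$; thus for two even-parity representatives $v,w$ one has $v-w+u\in\Vor(\mathsf{D}_n)$ for some $u\in 2\mathbb{Z}^n$ if and only if $v-w\equiv e_i+e_j\pmod{2\mathbb{Z}^n}$ for some $i\neq j$, which (since the coordinates of $v,w$ are in $\{0,1\}$) is equivalent to $v$ and $w$ differing in exactly two coordinates. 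This is precisely the edge relation of the half-cube, so $G$ and the vertex-edge graph of $\tfrac{1}{2}H_n$ are isomorphic and the two inequalities combine.

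The only non-routine ingredient is the well-known combinatorial fact that the edges of the demihypercube $\tfrac{1}{2}H_n$ are exactly the pairs of vertices at Hamming distance $2$; I expect this to be the main thing to pin down carefully (either by citation or by a one-line argument that such a pair is the unique shortest chord and is realised as a face). Beyond this, both directions are one-step applications of lemmas already in the paper.
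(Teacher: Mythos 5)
Your proof is correct and is essentially the paper's argument restated: the lower bound is identical, and applying Lemma~\ref{lem:upperboundquotient} with $\Lambda' = 2\mathbb{Z}^n$ and identifying the quotient graph with the $1$-skeleton of $\frac{1}{2}H_n$ is exactly the mod-$2$ coloring $x \mapsto c(x \bmod 2)$ that the paper constructs directly. The one fact you correctly flag as needing care — that the polytope edges of $\frac{1}{2}H_n$ are precisely the Hamming-distance-$2$ pairs of even-parity vertices — is likewise taken for granted in the paper's proof.
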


\begin{proof}
  The inequality $\chi(\mathsf{D}_n) \geq \chi(\frac{1}{2} H_n)$ comes
  from the fact that $\frac{1}{2} H_n$ is a Delaunay polytope of
  $\chi(\mathsf{D}_n)$.

  Let $c$ be a proper coloring of $\frac{1}{2} H_n$. We extend it to
  $\mathsf{D}_n$ by giving to any $x \in \mathsf{D}_n$ the color
  $c(x \bmod 2)$.  Assume that two vectors $x_1$ and $x_2$ are
  adjacent in $\Cayley(\mathsf{D}_n, \Vor(\mathsf{D}_n))$.  Since the
  relevant vectors of $\mathsf{D}_n$ are of the form
  $\pm e_i \pm e_j$, the difference $x_1 - x_2 \bmod 2$ is also such a
  vector, so that $x_1 \bmod 2$ and $x_2 \bmod 2$ are adjacent in
  $\frac{1}{2} H_n$, and $c(x_1 \bmod 2) \neq c(x_2 \bmod 2) $. Hence,
  $\chi(\mathsf{D}_n) \leq \chi(\frac{1}{2} H_n)$.
\end{proof}

\begin{theorem}
  For every $n \geq 4$, the chromatic number of $\mathsf{D}^*_n$ is
  $4$.
\end{theorem}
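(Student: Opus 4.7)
The plan is to establish matching lower and upper bounds of $4$ by first determining the set of strict Voronoi vectors of $\mathsf{D}_n^*$ explicitly, then applying Lemma~\ref{lem:upperboundquotient} for the upper bound and exhibiting a small clique for the lower bound.

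First, I would prove that for $n \geq 4$, the strict Voronoi vectors of $\mathsf{D}_n^*$ are precisely the $2n + 2^n$ vectors
\[
\{\pm e_i : 1 \leq i \leq n\} \;\cup\; \{(\pm \tfrac12, \ldots, \pm \tfrac12)\}.
\]
This I would check from the characterization~\eqref{eq:Voronoi-characterization}. Writing $2\mathsf{D}_n^* = 2\mathbb{Z}^n \cup ((1,\ldots,1) + 2\mathbb{Z}^n)$, one splits $v + 2\mathsf{D}_n^*$ into two cosets of $2\mathbb{Z}^n$ and reduces each coordinate modulo $2$ to its smallest-absolute-value representative. For $v \in \mathbb{Z}^n$ this forces the shortest vector in $v + 2\mathbb{Z}^n$ to have entries in $\{0, \pm 1\}$, and a short parity analysis on the number $k$ of odd coordinates shows that the condition that $\pm v$ be the only shortest elements of $v + 2\mathsf{D}_n^*$ forces $k = 1$ and $v = \pm e_i$ (using $n \geq 4$ to control the other coset, whose shortest vector has norm $\sqrt{n-1} > 1$). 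For $v \in (\tfrac12,\ldots,\tfrac12) + \mathbb{Z}^n$, each coordinate modulo $2$ has a unique representative of absolute value $\tfrac12$, so each of the two cosets contributes a unique shortest vector, both sign vectors; hence $v$ is a strict Voronoi vector exactly when $v$ itself is one of the $(\pm\tfrac12,\ldots,\pm\tfrac12)$.

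For the upper bound, I would apply Lemma~\ref{lem:upperboundquotient} with the sublattice $\mathsf{D}_n \subset \mathsf{D}_n^*$, whose index is $|\mathsf{D}_n^*/\mathsf{D}_n| = 4$. None of the strict Voronoi vectors lies in $\mathsf{D}_n$: the $\pm e_i$ have odd coordinate sum, and the half-integer sign vectors are not even in $\mathbb{Z}^n$. Hence $\chi(\mathsf{D}_n^*) \leq 4$.

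For the lower bound, I would exhibit a $4$-clique in $\Cayley(\mathsf{D}_n^*, \Vor(\mathsf{D}_n^*))$. Set $h = (\tfrac12, \ldots, \tfrac12)$ and consider
\[
\{0,\; e_1,\; h,\; h - e_2\}.
\]
The six pairwise differences are $e_1$, $h$, $h - e_2$, $h - e_1$, $h - e_1 - e_2$, and $-e_2$; each is either a $\pm e_i$ or a $(\pm\tfrac12,\ldots,\pm\tfrac12)$, so each lies in $\Vor(\mathsf{D}_n^*)$. This gives $\chi(\mathsf{D}_n^*) \geq 4$.

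The only step requiring real work is the identification of $\Vor(\mathsf{D}_n^*)$; once that description is in hand, both the upper bound (a quotient argument) and the lower bound (a small explicit clique) are routine.
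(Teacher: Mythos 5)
Your proposal is correct and takes essentially the same route as the paper: identify $\Vor(\mathsf{D}_n^*)$, exhibit a $4$-clique for the lower bound, and produce a $4$-coloring for the upper bound. Two minor remarks: you supply the verification of $\Vor(\mathsf{D}_n^*)$ via the characterization~\eqref{eq:Voronoi-characterization}, which the paper simply asserts, and your upper bound via $\mathsf{D}_n^*/\mathsf{D}_n$ in Lemma~\ref{lem:upperboundquotient} is exactly the paper's coloring in disguise (each $\mathbb{Z}^n$-coset two-colored by parity), since $\mathsf{D}_n$ is the index-$4$ sublattice determined by the coset in $\mathsf{D}_n^*/\mathbb{Z}^n$ together with the checkerboard parity.
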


\begin{proof}
  The relevant vectors of the lattice
  $\mathsf{D}_n^* = \mathbb{Z}^n \cup \left((1/2, \ldots, 1/2) +
    \mathbb{Z}^n \right)$
  are the $2n$ vectors $\pm e_i$ and the $2^n$ vectors of the form
  $(\pm 1/2, \dots, \pm 1/2)$. The four vectors $0$,
  $(1,0, \ldots, 0)$, $(1/2, \ldots ,1/2)$ and $(1/2,\ldots,1/2,-1/2)$
  define a clique in $\Cayley(\mathsf{D}_n^*, \Vor(\mathsf{D}_n^*))$;
  and the unique way to color $\mathsf{D}^*_n$ with four colors is by
  coloring each copy of $\mathbb{Z}^n$ with two different colors.
\end{proof}

\subsection{$\mathsf{E}_6, \mathsf{E}_7, \mathsf{E}_8$ and their duals}

\begin{theorem}\label{thm:e}
  We have $\chi(\mathsf{E}_6) = 9$, $\chi(\mathsf{E}_7) = 14$ and
  $\chi(\mathsf{E}_8) = 16$.
\end{theorem}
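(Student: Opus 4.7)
The three equalities are proved by combining earlier lower bounds with explicit coloring constructions, and the three cases differ significantly in difficulty. The lower bounds $\chi(\mathsf{E}_6) \geq 9$ and $\chi(\mathsf{E}_8) \geq 16$ follow directly from the spectral bounds computed in Theorems~\ref{Theorem_CritValues_E6} and~\ref{Theorem_CritValues_E8}, or equivalently from the Schl\"afli graph argument of Section~\ref{ssec:e6Schlafi} for $\mathsf{E}_6$ and the sphere packing argument of Section~\ref{ssec:SpherePackingLeechE8} for $\mathsf{E}_8$ (apply Theorem~\ref{thm:spherepackingbound} with $\rho = \sqrt{8}$, using that $\mathsf{E}_8$ is the densest lattice packing in dimension $8$). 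The lower bound $\chi(\mathsf{E}_7) \geq 14$ is substantially more delicate, since the spectral method yields only $10$. I would establish it by exhibiting an explicit finite subgraph $H \subseteq \Cayley(\mathsf{E}_7, \Vor(\mathsf{E}_7))$ with $\chi(H) \geq 14$---for instance the induced subgraph on a quotient $\mathsf{E}_7/\Lambda'$ by a carefully chosen root-free sublattice $\Lambda'$---and certifying the lower bound on $\chi(H)$ via the symmetry-reduced fractional chromatic number linear program described at the start of Section~\ref{sec:coloring-root}; then $\chi(\mathsf{E}_7) \geq \chi(H) \geq 14$ by de~Bruijn--Erd\H os.

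For $\chi(\mathsf{E}_8) \leq 16$ I would exhibit an isometric copy of $\sqrt{2}\,\mathsf{E}_8$ as a sublattice of $\mathsf{E}_8$: since $\mathsf{E}_8$ is unimodular and $\det(\sqrt{2}\,\mathsf{E}_8) = 2^8$, the index is $\sqrt{2^8} = 16$; as its minimum squared norm is $4$, no root of $\mathsf{E}_8$ lies in it, so Lemma~\ref{lem:upperboundquotient} yields the bound, in perfect analogy with the argument for $\Lambda_{24}$ in the proof of Theorem~\ref{thm:Leech}. For $\chi(\mathsf{E}_6) \leq 9$ I would exploit the standard gluing description $\mathsf{A}_2^3 \subseteq \mathsf{E}_6$ of index~$3$, with glue code generated by $(1,1,1) \in (\mathbb{Z}/3)^3$ (which is consistent with the root count $18 + 27 + 27 = 72$). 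Fixing a proper $3$-coloring $c_0$ of $\mathsf{A}_2^3$ (which exists because $\chi(\mathsf{A}_2) = 3$ and Theorem~\ref{thm:OrthogonalSum}), and a vector $\tilde g \in \mathsf{E}_6$ representing a generator of $\mathsf{E}_6/\mathsf{A}_2^3 \cong \mathbb{Z}/3$, the map
\[
c(x) = \bigl(c_0(x - \alpha(x)\tilde g),\, \alpha(x)\bigr) \in \mathbb{Z}/3 \times \mathbb{Z}/3,
\]
where $\alpha(x) \in \{0,1,2\}$ is the coset class of $x$, is a proper $9$-coloring: if a root $v \in \Vor(\mathsf{E}_6)$ lies in $\mathsf{A}_2^3$, then $\alpha$ is constant along that edge while $c_0$ separates its endpoints (because the restriction to each coset is a translate of $c_0$); otherwise $\alpha$ separates them.

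For $\chi(\mathsf{E}_7) \leq 14$, the index~$14$ does not arise as a natural power from a clean lattice quotient (e.g.\ $\mathsf{A}_7 \subset \mathsf{E}_7$ of index~$2$ combined with the tensor construction only gives~$16$, not~$14$), so a direct sublattice argument as for $\mathsf{E}_6$ and $\mathsf{E}_8$ does not apply. Instead, I would construct a $14$-coloring by selecting an appropriate finite quotient graph of $\Cayley(\mathsf{E}_7, \Vor(\mathsf{E}_7))$---namely $\mathsf{E}_7$ modulo a suitable root-free sublattice---and producing a proper $14$-coloring of it via the SAT-based search referenced at the start of Section~\ref{sec:coloring-root} (using {\tt Minisat}); such a coloring lifts periodically to a proper $14$-coloring of the full infinite Cayley graph.

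The main obstacle is the $\mathsf{E}_7$ case in both directions: both the lower bound~$14$ (strictly above the spectral bound~$10$) and the upper bound~$14$ (not arising from a natural index-$14$ sublattice) require identifying the right finite quotient graph and carrying out a nontrivial combinatorial computation on it---either to certify $\chi(H) \geq 14$ via fractional chromatic numbers, or to exhibit a proper $14$-coloring via SAT---whereas $\mathsf{E}_6$ and $\mathsf{E}_8$ can be settled by purely lattice-theoretic means.
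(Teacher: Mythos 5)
Your handling of $\mathsf{E}_6$ and $\mathsf{E}_8$ is correct but takes a genuinely different route from the paper. For $\mathsf{E}_8 \leq 16$ the paper does not use a $\sqrt{2}\,\mathsf{E}_8$ sublattice: instead it invokes $\chi(\mathsf{D}_8) = \chi(\tfrac{1}{2}H_8) = 8$ from Theorem~\ref{thm:d} and colors the two cosets of $\mathsf{D}_8$ in $\mathsf{E}_8 = \mathsf{D}_8 \cup ((1/2,\ldots,1/2)+\mathsf{D}_8)$ with disjoint $8$-color palettes. Your sublattice argument is valid (and has the aesthetic advantage of exactly mirroring the Leech proof of Theorem~\ref{thm:Leech}), but you should actually exhibit the isometric embedding $\sqrt{2}\,\mathsf{E}_8 \hookrightarrow \mathsf{E}_8$ --- e.g.\ via the Gaussian $\mathbb{Z}[i]$-structure on $\mathsf{E}_8$, where multiplication by $1+i$ does the job --- rather than asserting it from the determinant computation alone. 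For $\mathsf{E}_6 \leq 9$ the paper finds an index-$9$ root-free sublattice spanned by difference vectors extracted from a $9$-coloring of the Schl\"afli graph; your gluing $\mathsf{A}_2^3 \subset \mathsf{E}_6$ of index~$3$ combined with the coset-offset coloring $c(x) = (c_0(x-\alpha(x)\tilde g),\alpha(x))$ is a clean and correct alternative.

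The genuine gap is $\mathsf{E}_7$, in both directions. For the lower bound the paper uses the Gosset polytope, the second Delaunay polytope of $\mathsf{E}_7$ with $56$ vertices, whose vertex-edge graph (the Gosset graph) has chromatic number $14$; this is precisely the kind of canonical finite induced subgraph that Section~\ref{ssec:easy-lower-bounds} advertises. Your proposal is vaguer: ``the induced subgraph on a quotient $\mathsf{E}_7/\Lambda'$'' conflates two different objects. The quotient graph $G$ of Lemma~\ref{lem:upperboundquotient} satisfies $\chi(G) \geq \chi(\mathsf{E}_7)$, so it only gives upper bounds; to get a lower bound you must choose an honest finite induced subgraph (the vertices of a Delaunay polytope are the natural choice), and then the candidate $\Lambda'$, the resulting graph, and the claimed value of its fractional chromatic number all need to be produced explicitly --- none of which your sketch does. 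Invoking de~Bruijn--Erd\H{o}s is also unnecessary in this direction, since $\chi(G) \geq \chi(H)$ for any subgraph $H$ is trivial. For the upper bound $\chi(\mathsf{E}_7) \leq 14$ the paper has a short non-computational argument you miss: laminate $\mathsf{E}_7 = \bigcup_{n\in\mathbb{Z}}(nu + \mathsf{A}_6)$; two vectors in layers of the same parity never differ by a root, so one can use $\chi(\mathsf{A}_6) = 7$ colors on the even layers and a disjoint set of $7$ on the odd layers. Your SAT-based fallback would succeed in principle, but the lamination idea is the actual content of the paper's proof for $\mathsf{E}_7$ and is what your proposal fails to find.
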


\begin{proof}
By Theorem~\ref{thm:d} we know that $\chi(\mathsf{D}_8) = 8$.
So one can color the root lattice $\mathsf{E}_8 = \mathsf{D}_8 \cup ((1/2, \ldots, 1/2)) +  \mathsf{D}_8)$
with $16$ colors. The lower bound from Theorem~\ref{Theorem_CritValues_E8}
concludes the case of $\mathsf{E}_8$.

For $\mathsf{E}_7$ there are two orbits of Delaunay polytopes. One of them is the Gosset
polytope with $56$ vertices, whose vertex-edge graph has chromatic number $14$,
which shows that $\chi(\mathsf{E}_7) \geq 14$. Moreover, we
have a lamination of $\mathsf{E}_7$ over the lattice $\mathsf{A}_6$:
\begin{equation*}
\mathsf{E}_7 = \bigcup_{n\in \mathbb{Z}} (n u + \mathsf{A}_6) \mbox{~for~some~} u\in \mathsf{E}_7.
\end{equation*}
If $v$ and $w$ belong to two layers which differ by an even index, then $v - w$
is not a relevant vector. Following Section \ref{sec:firstkind},
we know that $\chi(\mathsf{A}_6) = 7$. Thus we can color
the even layers by colors in $\{1, \dots 7\}$ and the odd layers by
colors in $\{8,\dots, 14\}$. This proves that
$\chi(\mathsf{E}_7) = 14$.

The unique Delaunay polytope of $\mathsf{E}_6$ is the Schl\"afli
polytope whose vertex-edge graph is the
Schl\"afli graph with $27$ vertices. It is well known that its
chromatic number is $9$, so that $\chi(\mathsf{E}_6) \geq 9$.
There is just one orbit of independent triples of vertices.
Moreover, there are just two orbits of colorings
with $9$ colors of the Sch\"afli graph: One orbit of size $160$ and
another of size $40$.  Let us take a coloring from the orbit of size
$40$. It is composed of $9$ different triples of elements.  For each
such triple $\{v_1, v_2, v_3\}$ we consider the set of vectors
$\{v_1-v_2, v_2 - v_3, v_3 - v_1\}$.  Since we have $9$ triples, this
gives in total $27$ vectors. Those vectors span a sublattice $L$ of
$\mathsf{E}_6$ of index $9$. It turns out that none of the relevant
vectors of $\mathsf{E}_6$ belongs to $L$. Thus following Lemma~\ref{lem:upperboundquotient}, $\chi(\mathsf{E}_6) \leq 9$.
\end{proof}

\begin{theorem}
\label{thm:chromaticdual}
The chromatic number of $\mathsf{E}_n^*$ is $16$ for $n=6,7$.
\end{theorem}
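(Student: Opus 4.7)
My plan is to match upper and lower bounds at $16$, with the organizing structural fact being the chain $\mathsf{E}_n \perp \mathsf{A}_{8-n} \subseteq \mathsf{E}_8 \subseteq \mathsf{E}_n^* \perp \mathsf{A}_{8-n}^*$, both inclusions of index $9-n$ (i.e.\ $3$ for $n=6$ and $2$ for $n=7$). The right-hand inclusion identifies $\mathsf{E}_n^*$ with the orthogonal projection $\pi$ of $\mathsf{E}_8$ onto the $\mathsf{E}_n$-subspace, and decomposes every root $r$ of $\mathsf{E}_8$ as $r = (r_{\mathsf{E}_n^*}, r_{\mathsf{A}^*})$; short Voronoi vectors of $\mathsf{E}_n^*$ arise as the $\mathsf{E}_n^*$-parts of such roots.

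For the upper bound $\chi(\mathsf{E}_n^*) \leq 16$, I would exhibit an index-$16$ sublattice $\Lambda' \subset \mathsf{E}_n^*$ disjoint from $\Vor(\mathsf{E}_n^*)$ and invoke Lemma~\ref{lem:upperboundquotient}. A natural candidate comes from the $16$-coloring of $\mathsf{E}_8$ in the proof of Theorem~\ref{thm:e}: its $0$-color class $M$ is an index-$16$ sublattice of $\mathsf{E}_8$ with $M \cap \Vor(\mathsf{E}_8) = \emptyset$, and (for a suitably chosen underlying $\mathsf{D}_8$-coloring) one hopes $\Lambda' := \pi(M)$ has index $16$ in $\mathsf{E}_n^*$. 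The condition $\pi(M) \cap \Vor(\mathsf{E}_n^*) = \emptyset$ holds for every $u \in \Vor(\mathsf{E}_n^*)$ whose lifts to $\mathsf{E}_8$ include a root (which is necessarily outside $M$). If the projection strategy does not give exactly index $16$ or leaves exceptional $u$ unaccounted, one would instead build $\Lambda'$ directly from the explicit descriptions of $\mathsf{E}_6^*$ and $\mathsf{E}_7^*$ given at the end of Section~\ref{ssec:rootlattices}.

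For the lower bound $\chi(\mathsf{E}_n^*) \geq 16$, my primary tool is the spectral Hoffman bound of Corollary~\ref{cor:Hoffman}: compute
\[
\inf_{x \in \mathbb{R}^n}\,\frac{1}{|\Vor(\mathsf{E}_n^*)|}\sum_{u \in \Vor(\mathsf{E}_n^*)} e^{2\pi i u \cdot x}
\]
and verify it equals $-1/15$, adapting the sum-of-squares methodology of Section~\ref{ssec:e7e8sos} with computer assistance if needed. Two alternatives are available: the sphere-packing bound Theorem~\ref{thm:spherepackingbound}, applied using the density of $\mathsf{E}_n^*$ together with an upper bound on the longest Voronoi vector, and a combinatorial argument exhibiting a Delaunay polytope of $\mathsf{E}_n^*$ whose vertex-edge graph has chromatic number $16$, in the spirit of the Schl\"afli and Gosset arguments used for $\mathsf{E}_6$ and $\mathsf{E}_7$ in the proof of Theorem~\ref{thm:e}.

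The principal obstacle on both sides is the precise determination of $\Vor(\mathsf{E}_n^*)$---its cardinality, its norm spectrum, and the way in which its elements are realized as projected roots of $\mathsf{E}_8$---since this is more intricate than $\Vor(\mathsf{E}_n)$, which consists only of the $\mathsf{E}_n$-roots. For $\mathsf{E}_6^*$ there is the additional wrinkle that $[\mathsf{E}_6^*:\mathsf{E}_6]=3$ is coprime to $16$, so $\Lambda'$ cannot sit inside $\mathsf{E}_6$ and must mix the three cosets $\mathsf{E}_6$ and $\pm v + \mathsf{E}_6$ in a coordinated way, which the projection construction handles automatically but needs verification. The spectral computation for $\mathsf{E}_7^*$ is likely the most delicate step, but mirrors the $\mathsf{E}_7, \mathsf{E}_8$ sum-of-squares certificates already worked out.
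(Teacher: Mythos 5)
Your plan correctly identifies the two tasks (upper bound via a periodic coloring, lower bound via some spectral or combinatorial bound) and uses the right chain $\mathsf{E}_n \perp \mathsf{A}_{8-n} \subseteq \mathsf{E}_8 \subseteq \mathsf{E}_n^* \perp \mathsf{A}_{8-n}^*$, but both halves of the proposal diverge from what the paper actually does, and neither half is carried to completion.

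\textbf{Lower bound.} You propose the spectral Hoffman bound of Corollary~\ref{cor:Hoffman} as the primary tool, hoping it evaluates to $-1/15$. The paper does \emph{not} use the spectral bound here; it computes the \emph{fractional} chromatic number of the finite induced subgraph on $\{0\} \cup \Vor(\mathsf{E}_6^*)$ (a $127$-vertex graph), obtaining $77/5$ and rounding up to $16$, and does the analogous computation for $\mathsf{E}_7^*$. This is a strictly stronger LP bound than the Hoffman bound, and there is good reason to suspect the Hoffman bound falls short: for $\mathsf{E}_7$ itself the spectral bound gives only $10$ while $\chi(\mathsf{E}_7)=14$ (see Table~\ref{table:SpectralIrreducible} versus Theorem~\ref{thm:e}), so the phenomenon of the spectral bound being far from tight is already present in this family. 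Your fallbacks are no better placed: the sphere-packing bound of Theorem~\ref{thm:spherepackingbound} cannot be tight for $\mathsf{E}_6^*$ or $\mathsf{E}_7^*$ since neither is the densest packing in its dimension, and the Delaunay-polytope chromatic-number route is not what the paper uses (it uses a neighborhood graph, and its \emph{fractional} chromatic number, not an integer chromatic number of a polytope graph). Without evidence that any of your candidate lower bounds actually reaches $16$, this half of the argument is a genuine gap.

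\textbf{Upper bound.} You restrict yourself to Lemma~\ref{lem:upperboundquotient}, i.e.\ to producing an index-$16$ sublattice avoiding $\Vor(\mathsf{E}_n^*)$, via a heuristic projection of an $\mathsf{E}_8$ color class, and you yourself flag that this may fail. The paper does something different in each case: for $\mathsf{E}_6^*$, a computational enumeration of sublattices spanned by norm-$10$ vectors finds exactly one of index $16$ avoiding the relevant vectors, which then feeds into Lemma~\ref{lem:upperboundquotient}; but for $\mathsf{E}_7^*$ the paper does \emph{not} exhibit any such sublattice---instead it produces a $16$-coloring of the $16384$-element quotient $\mathsf{E}_7^*/4\mathsf{E}_7^*$ by SAT solving, a periodic coloring that need not come from cosets of a sublattice at all. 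Your plan, which insists on a sublattice coset decomposition, is therefore strictly more demanding than what the paper establishes, and may simply be unachievable for $\mathsf{E}_7^*$. The projection idea also has a concrete obstruction you don't address: the $16$-coloring of $\mathsf{E}_8$ built in Theorem~\ref{thm:e} stacks an $8$-coloring of $\frac{1}{2}H_8$ on two $\mathsf{D}_8$ cosets, and there is no reason for the zero color class of that coloring to be a sublattice, so the object $M$ you want to project may not exist in the form you need.

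In short, the proposal's overall architecture is reasonable, but the primary lower-bound tool is likely too weak, the primary upper-bound construction is unverified and self-acknowledged as shaky, and the fallbacks are either not tight (sphere packing) or left unspecified (direct construction). The paper's proof is irreducibly computational---enumeration of sublattices, fractional chromatic number by LP, and a SAT solve---and those computations are what carry the statement; your sketch does not replicate or replace them.
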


\begin{proof}
  Upon rescaling to an integral lattice the norms of the vectors of
  $\mathsf{E}_6^*$ are $4$, $6$, $10$ and so on.  The norms of the
  relevant vectors are $4$ and $6$. We consider the $432$ vectors of
  norm $10$ and enumerate the sublattices of $\mathsf{E}_6^*$ of
  dimension $6$ spanned by those vectors that do not contain any
  relevant vector.  We found $1393$ orbits of such lattices. Exactly
  one of them is of index $16$ which proves that
  $\chi(\mathsf{E}_6^*) \leq 16$.

  The lower bound is obtained in the following way. We consider the
  graph formed by the origin $0$ and the $126$ relevant vectors with
  two vectors adjacent if their difference is a relevant vector. The
  fractional chromatic number of this graph is $77/5$. Thus
  $\chi(\mathsf{E}_6^*) \geq \lceil 77 / 5\rceil = 16$.

  The lower bound on the chromatic number of $\mathsf{E}_7^*$ is
  obtained by the same technique as for $\mathsf{E}_6^*$.  The upper
  bound is obtained in the following way: Consider the quotient
  $\mathsf{E}_7^* / 4\mathsf{E}_7^*$ with $16384$ elements. One
  coloring with $16$ colors can be obtained by solving the
  satisfiability problem using {\tt Minisat}.
\end{proof}

\section*{Acknowledgements}

We thank Jean-Pierre Serre for his patient explanation of how to
compute the critical points of the function
$\ch_{\ad}^{\mathsf{E}_n}$.  We are indebted to Christine Bachoc and
Gabriele Nebe for their important contributions to
Section~\ref{sec:spherepackingbound}. We thank the anonymous
referee for helpful comments and suggestions.

This material is based upon work supported by the National Science
Foundation under Grant No.~DMS-1439786 while M.D.S.\ and P.M.\ were in
residence at the Institute for Computational and Experimental Research
in Mathematics in Providence, RI, during the ``Point Configurations in
Geometry, Physics and Computer Science'' semester program. P.M.\
received support from the Troms{\o} Research Foundation grant
17\_matte\_CR. F.V.\ was partially supported by the SFB/TRR 191
``Symplectic Structures in Geometry, Algebra and Dynamics''. F.V.\
also gratefully acknowledges support by DFG grant VA 359/1-1. This
project has received funding from the European Union's Horizon 2020
research and innovation programme under the Marie Sk\l{}odowska-Curie
agreement number~764759.

\appendix

\section{Recollections on compact Lie groups}
\label{sec:liegroups}

In this section, we collect, for the benefit of the unfamiliar reader,
without proof but with references, a few facts about semisimple
compact Lie groups and representation theory.  All of the following
results are standard, although it is not easy to find them
conveniently stated in a single place, so we hope this compendium will
be helpful.

The main reason for this appendix insofar as the present paper is
concerned is to explain the reason behind the reformulation which we
give in theorem~\ref{thm:serrestheorem} of Serre's \cite[Theorem
  3']{Serre2004a}, namely the connection between the Fourier transform
of a root system $\Phi$ and the character of the adjoint
representation of the Lie groups associated with~$\Phi$: this is
provided by~\ref{prop:adjcharvals}.  We have, however, stated a few
additional results which are not strictly necessary towards that goal
but which, we hope, help give a clearer overall picture.  The
secondary reason for this appendix is to provide the necessary
framework for appendix~\ref{sec:serresproof} (although the latter
could, in principle, be reworded so as to eliminate all mentions of
Lie groups just like we did for theorem~\ref{thm:serrestheorem}, we
believe this would be unnecessarily contrived).

\begin{remark}
We have chosen to focus these recollections on semisimple compact real
Lie groups, but the classification and representation theory of
semisimple \emph{complex} Lie groups is identical (Weyl's ``unitarian
trick'', cf.~\cite[\S6.2]{Kirillov2008} and
\cite[\S26]{FultonHarris2004}): we simply mention that the role of the
tangent Lie algebra $\mathfrak{t}$ to a maximal torus in what follows
is played, in the complex setting, by Cartan subalgebras
$\mathfrak{h}$ of~$\mathfrak{g}$ (\cite[definition~6.32]{Kirillov2008}
and \cite[\S14.1 and appendix~D]{FultonHarris2004}).
\end{remark}


\parapoint\label{par:cptliegroups}
A \emph{compact (real) Lie group} is a compact connected real
smooth manifold~$G$, together with a group structure on~$G$
such that the multiplication and inverse maps are smooth ($C^\infty$).
The tangent space $\mathfrak{g}$ at the identity~$1$ of~$G$ is then
endowed with a linear action $\Adjt$ of~$G$, called the \emph{adjoint
  representation} of~$G$, defined by letting $\Adjt(g) \colon
\mathfrak{g} \to \mathfrak{g}$ (for~$g\in G$) be the differential
at~$1$ of $u\mapsto g u g^{-1}$; this in turns defines a linear map
$\adjt(x) \colon \mathfrak{g} \to \mathfrak{g}$ for $x\in\mathfrak{g}$
by letting $\adjt \colon \mathfrak{g} \to L(\mathfrak{g},
\mathfrak{g})$ (where $L(U,V)$ stands for the vector space of linear
maps between two vector spaces $U$~and~$V$) be the differential at~$1$
of~$\Adjt \colon G \to L(\mathfrak{g}, \mathfrak{g})$ itself
(see~\cite[\S8.1]{FultonHarris2004}): writing $[x,y]$ for
$\adjt(x)(y)$, this gives $\mathfrak{g}$ the structure of a (real)
\emph{Lie algebra} (simply known as the Lie algebra of~$G$).

We also recall the exponential map $\exp \colon \mathfrak{g} \to G$,
which takes $x\in\mathfrak{g}$ to the value at~$1$ of the unique
smooth group homomorphism $\mathbb{R} \to G$ (a.k.a. ``$1$-parameter
subgroup'') whose differential at the origin is~$x$.  While $\exp$ is
not a group homomorphism in general, it is one whenever $G$ is abelian
(whenever the Lie bracket of $\mathfrak{g}$ vanishes; otherwise, the
so-called Baker-Campbell-Hausdorff formula expresses the relation of
$\exp(x)\,\exp(y)$ to an exponential).  Rather than using the
exponential, we will find it more convenient to use the function
$\mathbf{e} \colon x \mapsto \exp(2\pi x)$; just as $\exp$ itself, the
function $\mathbf{e}$ in question is surjective
(cf.~\ref{par:toruslie} below).

The \emph{Killing form} $B \colon \mathfrak{g}\times\mathfrak{g} \to
\mathfrak{g}$ is the bilinear form $B(x,y) =
\tr(\adjt(x)\circ\adjt(y))$, which is $G$-invariant; in the real
compact case in which we placed ourselves, this form is negative
semidefinite (\cite[theorem~6.10]{Kirillov2008} or
\cite[theorem~2.13]{Arvanitoyeorgos2003}), and we say that $G$ or
$\mathfrak{g}$ is \emph{semisimple} when $B$ is nondegenerate
(\cite[prop.~C10]{FultonHarris2004},
\cite[theorem~5.53]{Kirillov2008}), i.e., negative definite in the
real compact case.

As an example, the group $\mathit{SO}_{2n}$ of $(2n)\times(2n)$ real
orthogonal matrices with determinant~$+1$ is a compact real Lie group,
whose Lie algebra $\mathfrak{so}_{2n}$ consists of antisymmetric
$(2n)\times(2n)$ matrices, the Lie bracket $[x,y]$ being the usual
$x y-y x$, and the Killing form on $\mathfrak{so}_{2n}$ is given by
$B(x,y) = 2(n-1)\tr(x y)$, so that $\mathfrak{so}_{2n}$ is semisimple
if and only if~$n\geq 2$.

\parapoint\label{par:liegroupsvsalgs}
If $G$ is a compact Lie group with Lie algebra~$\mathfrak{g}$, the map
taking a connected closed subgroup $H$ of $G$ to its Lie algebra seen
as a subalgebra $\mathfrak{h}$ of~$\mathfrak{g}$ (i.e., a vector
subspace closed under the Lie bracket) is a bijection
(\cite[theorem~3.40]{Kirillov2008}).

Two \emph{simply connected} compact Lie groups are isomorphic if and
only if their Lie algebras are isomorphic
(\cite[theorem~3.43]{Kirillov2008}); thus, two compact Lie groups with
isomorphic Lie algebras have isomorphic universal coverings: they are
then said to be \emph{isogenous}.  (We note that, for the purposes of
this paper, isogenous Lie groups are an irrelevant complication.)
Beware, however, that the universal covering of a compact Lie group
need not be compact as the case of tori shows.

Conversely, any real Lie algebra with a negative semidefinite Killing
form is the Lie algebra of some compact Lie group (unique up to
isogeny, by the previous paragraph).  We return
in~\ref{par:liefundgrp} to the question of which Lie groups are
possible in the semisimple case.

\parapoint\label{par:maxtori}
If $G$ is a compact Lie group, a \emph{torus} in~$G$ is an abelian
connected closed subgroup of~$G$, or equivalently, one whose Lie
algebra is abelian (meaning that its Lie bracket is trivial).  A
\emph{maximal torus}, of course, is a torus that is maximal for
inclusion; by \ref{par:liegroupsvsalgs}, maximal tori of $G$ are in
bijection with maximal abelian Lie subalgebras of the Lie
algebra~$\mathfrak{g}$ of~$G$.

As an example, a maximal torus in $\mathit{SO}_{2n}$ is given by the
block diagonal matrices whose diagonal blocks are $2\times 2$ rotation
matrices.

Crucial results by Cartan concerning maximal tori of compact Lie
groups are (\cite[theorems 16.4~and~16.5]{Bump2004} or
\cite[theorem~2.15]{Arvanitoyeorgos2003} or \cite[corollaries
  4.35~and~4.46]{Knapp2002}): \textbf{(a)}~every element of $G$
belongs to some maximal torus, and \textbf{(b)}~all maximal tori of
$G$ are conjugate; in particular, each element of $G$ is conjugate to
some element of any fixed maximal torus of~$G$.

The dimension of some (any) maximal torus~$T$ in $G$ (or equivalently,
of its Lie algebra) is known as the \emph{rank} of~$G$.  The quotient
$N_G(T)/T$ of the normalizer of $T$ (in~$G$) by~$T$ itself is known as
the \emph{Weyl group} $W$ of~$G$: so a $W$-orbit in $T$ is precisely a
full set of $G$-conjugate elements of~$T$, and the set of conjugacy
classes in $G$ can be identified (as a set) with~$T/W$.  We note that
$W$ acts as a group of automorphisms of~$T$, so it also acts
(linearly) on the Lie algebra $\mathfrak{t}$ of~$T$ and (by inverse
transpose) on the dual $\mathfrak{t}^*$ of~$\mathfrak{t}$.

\parapoint\label{par:toruslie}
If $T$ is an abstract torus, i.e., an abelian compact Lie group, and
$\mathfrak{t}$ is its Lie algebra, the map $x \mapsto \mathbf{e}(x) :=
\exp(2\pi x)$ (in other words the differentiable group homomorphism
$\mathfrak{t} \to T$ whose differential at~$0$ is $2\pi$ times the
identity) defines a surjective homomorphism $\mathfrak{t} \to T$,
whose kernel is a discrete subgroup $\Gamma$ of~$\mathfrak{t}$.  Thus,
we can identify $T$ with $\mathfrak{t}/\Gamma$ (as a differentiable
group), and functions on $T$ with $\Gamma$-periodic functions
on~$\mathfrak{t}$.  We will call $\Gamma$ the \emph{period lattice} of
the torus~$T$.

\parapoint\label{par:lierepns}
If $G$ is a compact Lie group, a (finite-dimensional)
\emph{representation} of~$G$ on a finite-dimensional complex vector
space~$V$ is a differentiable linear action of $G$ on~$V$, i.e., a
differentiable group homomorphism $\rho\colon G \to \mathit{GL}(V)$.
The \emph{character} of said representation is the map $g \mapsto
\tr\rho(g)$ (a differentiable function on~$G$, invariant under
conjugation).  The representation is said to be \emph{irreducible}
when the only $G$-invariant subspaces of~$V$ are $0$~and~$V$.  It
turns out that every representation of~$G$ is a direct sum of
irreducible representations (\cite[theorem~4.40]{Kirillov2008}); and a
representation is characterized (up to isomorphism) by its character
(\cite[theorem~2.5]{Bump2004} or \cite[theorem~4.46]{Kirillov2008}).

Furthermore, although we will not use this, it might be worth pointing
out the Peter-Weyl theorem: the characters of the irreducible
representations of~$G$ form a Hilbert orthonormal basis for the closed
subspace consisting of conjugation-invariant functions in the Hilbert
space $L^2(G)$ of square-integrable functions on~$G$
(\cite[theorem~4.50]{Kirillov2008}; incidentally, these functions are
also eigenvalues of the Laplace-Beltrami operator on $G$ seen as a
Riemannian manifold).

Among the representations of~$G$, the adjoint representation (defined
in~\ref{par:cptliegroups} above as a map $\Adjt\colon
G\to\mathit{GL}(\mathfrak{g})$, which we see as an action on the
complexified vector space $\mathfrak{g}_{\mathbb{C}} := \mathfrak{g}
\otimes_{\mathbb{R}} \mathbb{C}$) is of particular importance; its
character $g \mapsto \tr\Adjt(g)$ is called the character of the
adjoint representation, or simply the adjoint character, of~$G$; the
adjoint representation is irreducible if and only if $G$ is simple
(this can be taken as a definition\footnote{A Lie group $G$ is called
  ``simple'' when it does not have nontrivial \emph{connected} normal
  subgroups: this allows for a finite center (the term ``quasisimple''
  might be more appropriate).}).

\parapoint\label{par:torusrepns}
Representation theory on a torus $T$ is well known: writing $T =
\mathfrak{t}/\Gamma$ through $x \mapsto \mathbf{e}(x) := \exp(2\pi x)$
as in~\ref{par:toruslie}, the irreducible characters of~$T$ are of the
form $\mathbf{e}(\lambda) : \mathbf{e}(x) \mapsto \exp(2\pi i
\lambda(x))$ with $\lambda$ ranging over the lattice $\Gamma^*$ dual
to $\Gamma$ (in the vector space $\mathfrak{t}^*$ dual to
$\mathfrak{t}$), which we call the \emph{character lattice} of~$T$.
The corresponding representations are all one-dimensional (acting by
multiplication by the character just defined).  In other words, the
irreducible representations of $T$ are indexed by~$\Gamma^*$, and the
orthonormal basis of characters of $T$ predicted by the Peter-Weyl
theorem is the usual Fourier basis on~$T = \mathfrak{t}/\Gamma$.  (As
mentioned in~\ref{par:toruslie}, we write $\mathbf{e}(\lambda)$ both
for the function $T \to \mathbb{C}$ defined earlier, and for the
$\Gamma$-periodic function $\mathfrak{t} \to \mathbb{C}$ given by $x
\mapsto \exp(2\pi i \lambda(x))$.)

We note that the ring of linear combinations (over $\mathbb{Z}$,
resp.~$\mathbb{C}$) of the $\mathbf{e}(\lambda)$, or \emph{character
  ring} (resp. character $\mathbb{C}$-algebra) of~$T$
(cf.~\ref{par:charring} below), is the group ring (resp.~group
$\mathbb{C}$-algebra) of the lattice~$\Gamma^*$.  (Choosing a basis
for $\Gamma^*$ shows that this is a ring of Laurent polynomials.)

\parapoint\label{par:charweights}
If now $G$ is a compact Lie group and $T$ is a maximal
torus in~$G$ with corresponding Lie algebras $\mathfrak{t} \subseteq
\mathfrak{g}$, given a representation $V$ of~$G$ having character
$\chi$, we can restrict them to~$T$ and consider the Fourier
decomposition of~$\chi|_T$, i.e., its decomposition $\chi|_T =
\sum_{\lambda\in\Gamma^*} m_\lambda \mathbf{e}(\lambda)$ in terms of
the characters $\mathbf{e}(\lambda)$ (for $\lambda\in\Gamma^*$)
defined in the previous paragraph: clearly $m_\lambda$ is the
dimension of the subspace $V^{\lambda}$ of $V$ consisting of those
$z\in V$ such that $\rho(u)(z) = \mathbf{e}(\lambda)(u)$ for each
$u\in T$.  In particular, $m_\lambda\in\mathbb{N}$.  The $\lambda$
such that $m_\lambda>0$ are known as the \emph{weights} of the
representation~$V$ (or of the character~$\chi$), and we emphasize that
they belong to~$\Gamma^*$; the value $m_\lambda$ is know as the
\emph{multiplicity} of the weight~$\lambda$ (in $V$ or in~$\chi$), and
the subspace $V^{\lambda}$ on which $T$ acts through
$\mathbf{e}(\lambda)$ is known as the \emph{weight (eigen)space}; we
note that the weight space can be defined at the Lie algebra level as
the set of $z$ such that $d\rho_1(x)(z) = i\lambda(x)\,z$ for all
$x\in \mathfrak{t}$ (where $d\rho_1$ is the differential of~$\rho$
at~$1$; compare \cite[definition~8.1]{Kirillov2008}: we have added a
factor $i$ here for convenience in the compact case, but it is a
matter of convention).

Since, as explained in~\ref{par:maxtori} above, all maximal tori
of~$G$ are conjugate, the weights and multiplicities do not depend on
the choice of~$T$; furthermore, they are invariant under the action of
the Weyl group~$W$.  Seen as a function on $\mathfrak{t}$, the
character values are both $\Gamma$-periodic and $W$-invariant, so they
are $\Gamma \rtimes W$-invariant.  (Let us mention here the paper
\cite{Patera2004}, which can serve as link between the ``Fourier'' and
``Lie group characters'' points of view.)

\parapoint\label{par:rootsystems}
We temporarily leave aside Lie groups to recall the following
definitions and facts in relation with abstract root systems (see
\cite[\S9.2]{Humphreys1972}, \cite[\S7.1]{Kirillov2008} and
\cite[chap.~VI]{BourbakiLIE}).  A (reduced, crystallographic)
\emph{root system} is a set $\Phi$ of vectors in a finite dimensional
real vector space~$E$ such that (1)~$\Phi$ is finite, does not
contain~$0$, and spans~$E$, (2)~for every $\alpha\in\Phi$, there
exists $\alpha^\vee$ in the dual space $E^*$ of~$E$ such that
$\alpha^\vee(\alpha) = 2$ and such that the (symmetry) map
$s_\alpha\colon x \mapsto x - \alpha^\vee(x)\,\alpha$ leaves~$\Phi$
stable (it is easy to see that $\alpha^\vee$ is uniquely defined,
cf.~\cite[chap.~VI, \S1, n\textsuperscript{o}1, lemme~1]{BourbakiLIE},
so that the notation is legitimate), (3)~for every
$\alpha,\beta\in\Phi$ we have $\alpha^\vee(\beta) \in\mathbb{Z}$, and
(4)~if $\alpha\in\Phi$ and $c\alpha\in\Phi$ then $c \in \{\pm 1\}$.
The elements of $\Phi$ are called \emph{roots}, and the $\alpha^\vee$
are the \emph{coroots}.  The set $\Phi^\vee := \{\alpha^\vee :
\alpha\in\Phi\}$ of coroots is itself a root system (with
$(\alpha^\vee)^\vee = \alpha$), known as the \emph{dual} root system
to~$\Phi$.  The group generated by the $s_\alpha$ is known as the
\emph{Weyl group} of~$\Phi$, and it is finite.

Two root systems $\Phi\subseteq E$ and $\Phi'\subseteq E'$ are said to
be \emph{isomorphic} when there is a linear isomorphism between $E$
and~$E'$ taking $\Phi$ to~$\Phi'$.  In this case, they have isomorphic
dual systems and isomorphic Weyl groups.

The root system $\Phi\subseteq E$ is said to be \emph{reducible} when
it is the union (``sum'') of root systems $\Phi_1,\Phi_2$ in $E_1,E_2$
with $E = E_1 \oplus E_2$ respectively, \emph{irreducible} otherwise.
Every root system can be written in a unique way as the sum of
irreducible root systems, and any sum of root systems is a root
systems.

Given a root system $\Phi$ in~$E$, there exists a Euclidean structure
on~$E$ such that every~$s_\alpha$ (and consequently, every element of
the Weyl group) is orthogonal; equivalently, a Euclidean structure
which identifies $E$ with its dual~$E^*$ so that each coroot
$\alpha^\vee$ is proportional to the corresponding root~$\alpha$.
Such a Euclidean structure is said to be \emph{compatible}
with~$\Phi$.  (The definition of root systems is often written in a
manner that preassumes the Euclidean structure: in this case, the
coroot $\alpha^\vee$ associated to~$\alpha$ is defined as
$2\alpha/\|\alpha\|^2$.)  For $\Phi$ irreducible, this Euclidean
structure is unique up to a multiplicative constant, i.e., up to the
definition of the lengths of the roots; in the case of the simply
laced root system (those in which every root has the same length),
which concerns us in the present paper, the constant is generally
chosen such that the squared root length is~$2$, so that $\Phi$
and~$\Phi^\vee$ can be identified.  Nevertheless, it might be useful
for expositional clarity to keep the distinction between $\Phi$
and~$\Phi^\vee$ and the greater generality afforded by the not
necessarily simply laced root system, so we do not perform this
identification (but the reader may choose to do so).

Associated with a root system $\Phi$ as above are four lattices: the
lattice $Q := \mathbb{Z}\Phi \subseteq E$ generated by~$\Phi$ is known
as the \emph{root lattice}, the lattice $Q^\vee := \mathbb{Z}\Phi^\vee
\subseteq E^*$ generated by~$\Phi^\vee$ is known as the \emph{coroot
  lattice}; the lattice $P := (Q^\vee)^*$ (in~$E$) dual to the coroot
lattice is known as the \emph{weight lattice} and contains the root
lattice; and the lattice $P^\vee := Q^*$ (in~$E^*$) dual to the root
lattice is known as the \emph{coweight lattice} and contains the
coroot lattice.  The quotient of the weight lattice by the root
lattice, or equivalently of the coweight lattice by the coroot
lattice, is sometimes known as the \emph{fundamental group} of~$\Phi$
for reasons that will be clarified in~\ref{par:liefundgrp}.

\parapoint\label{par:posroots}
Continuing the exposition of root systems started
in~\ref{par:rootsystems}, if $h$ is a linear form on~$E$ such that
$h(\alpha)\neq 0$ for each~$\alpha\in\Phi$, the roots such that
$h(\alpha)>0$ are then known as the \emph{positive roots}, and those
such that $h(\alpha)<0$ as the \emph{negative roots} relative to~$h$:
a subset $\Phi_+ := \{\alpha\in\Phi : h(\alpha)>0\}$ which can be
obtained in this manner is known as a \emph{choice of positive roots}
for~$\Phi$.  The positive roots which cannot be written as sums of
other positive roots are known as \emph{simple roots} (for this choice
of positive roots): it is then a fact that the simple roots form a
basis of~$E$, and that every positive root is a linear combination of
the simple roots with nonnegative integer coefficients (not all zero);
so the choice of positive roots can be defined equivalently by the set
of simple roots.  The Weyl group acts simply transitively on the set
of all choices of positive roots (or the set of all choices of simple
roots).

The choice of positive roots also gives a choice of positive coroots,
(defined from $h$ as above by identifying $E$~with~$E^*$, or by saying
that the positive coroots are the coroots associated with positive
roots).  The dual basis $\varpi_1,\ldots,\varpi_n$ to the set
$\alpha^\vee_1,\ldots,\alpha^\vee_n$ of simple coroots is known as the
set of \emph{fundamental weights} (for the choice of positive roots);
symmetrically, the dual basis to the set of simple roots is known as
the set of \emph{fundamental coweights}.

The convex cone in $E$ generated by the fundamental weights, is known
as the \emph{closed Weyl chamber} in~$E$ corresponding to the choice
of positive roots: it is the dual cone to the positive coroot cone, in
other words, it is defined by the inequalities $\alpha^\vee(x) \geq 0$
for all positive coroots (or equivalently, for all simple coroots)
$\alpha^\vee$; the open Weyl chamber, defined by the inequalities
$\alpha^\vee(x) > 0$, is the interior of the closed Weyl chamber (and
the closed Weyl chamber is its closure).  Dually, the cone in~$E^*$
generated by the fundamental coweights, which is the dual cone to the
positive root cone, is also known as the closed Weyl chamber
(in~$E^*$).  The choice of a Weyl chamber is equivalent to a choice of
positive roots: the Weyl group acts simply transitively on the set of
Weyl chambers.

Given a choice of positive roots $\Phi_+ \subseteq \Phi$, there exists
a unique $\beta \in \Phi_+$ such that $\beta+\alpha \not\in \Phi_+$
for all~$\alpha \in \Phi_+$: this is known as the \emph{highest root}
of~$\Phi$ (relative to this choice of positive roots), and it belongs
to the open Weyl chamber.  The (integer) coefficients $m_i$ of $\beta$
on the basis of simple roots, i.e., the $m_i$ such that $\beta =
\sum_{i=1}^n m_i \alpha_i$ where $\alpha_1,\ldots,\alpha_n$ are
the simple roots, often come up in formulae involving $G$ or its root
system.  It is often more convenient to define $\alpha_0 = -\beta$
(the \emph{lowest root}) and $m_0 = 1$ so that $\sum_{i=0}^n m_i
\alpha_i = 0$.

\parapoint\label{par:dynkindiags}
Given a root system $\Phi$ and a choice of positive roots, we define
the \emph{Dynkin diagram} of~$\Phi$ as the graph whose vertices
(``nodes'') are the simple roots, two nodes $\alpha,\beta$ being
connected by a single, double or triple edge, or by no edge at all,
according as the angle between them is $2\pi/3$, $3\pi/4$ or $5\pi/6$,
or $\pi/2$ for no edge at all, these being the only possible values;
in the case of a double or triple edge, it is oriented by pointing
from the simple root with the larger norm to that with the smaller
norm.  (These constructions rely on a Euclidean structure compatible
with~$\Phi$, but are independent of the choice of such a structure.)

The Dynkin diagram of a root system $\Phi$ determines the latter up to
isomorphism.  Furthermore, all possible irreducible root systems can
be classified, with the help of Dynkin diagrams. The simply laced
Dynkin diagrams are shown on
Figure~\ref{FigureDynkinDiagramNumbering}.

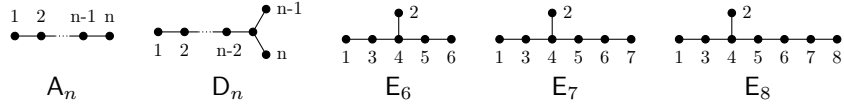
\begin{figure}
\begin{tabular}{ccccc}
\begin{tikzpicture}
 \dynkin[]{A}{};
\dynkinLabelRoot*{1}{1}
\dynkinLabelRoot*{2}{2}
\dynkinLabelRoot*{3}{$n-1$}
\dynkinLabelRoot*{4}{$n$}
\end{tikzpicture}
 &
\raisebox{-0.32cm}{\begin{tikzpicture}

 \dynkin[]{D}{};
\dynkinLabelRoot{1}{1}
\dynkinLabelRoot{2}{2}
\dynkinLabelRoot{4}{$n-2$}
\dynkinLabelRoot{5}{$n-1$}
\dynkinLabelRoot{6}{$n$}
\end{tikzpicture}}
&
\dynkin[label]{E}{6}
&
\dynkin[label]{E}{7}
&
\dynkin[label]{E}{8}
\\
$\mathsf{A}_n$ & $\mathsf{D}_n$ & $\mathsf{E}_6$ & $\mathsf{E}_7$ & $\mathsf{E}_8$
\end{tabular}
\caption{Simply laced Dynkin diagrams with the Bourbaki numbering of their nodes}
\label{FigureDynkinDiagramNumbering}
\end{figure}

\parapoint\label{par:lieroots}
We now return to the setup of a compact Lie group $G$ as
in~\ref{par:charweights}, and we furthermore assume $G$ to be
semisimple.

The nonzero weights of the adjoint representation of~$G$ are known as
the \emph{roots} of~$G$ (or of~$\mathfrak{g}$), and each one occurs
with multiplicity~$1$; as for the zero weight space, it is the
complexification of $\mathfrak{t}$ itself (in other words, its
multiplicity is the rank of~$G$).  So, writing
$\mathfrak{g}_{\mathbb{C}}^\alpha := \{z \in \mathfrak{g}_{\mathbb{C}}
: [x,z] = i\alpha(x)\,z \text{~for all~} x\in\mathfrak{t}\}$ for the
weight space of $\alpha \in \Phi$ acting on $\mathfrak{g}_{\mathbb{C}}
:= \mathfrak{g} \otimes_{\mathbb{R}} \mathbb{C}$, we have the weight
space decomposition $\mathfrak{g}_{\mathbb{C}} =
\mathfrak{t}_{\mathbb{C}} \oplus \bigoplus_{\alpha\in\Phi}
\mathfrak{g}_{\mathbb{C}}^\alpha$ (compare
\cite[formula~(2.16)]{Knapp2002} and
\cite[theorem~6.38]{Kirillov2008}).

The set $\Phi$ of these roots is an abstract (reduced,
crystallographic) root system (\cite[theorem~19.2]{Bump2004} or
\cite[theorem~6.44]{Kirillov2008}), whose Weyl group is that which we
have already associated to~$G$ (cf.~\ref{par:maxtori}); it is
irreducible if and only if $G$ is simple.  Furthermore, this induces a
bijection between the isomorphism classes of semisimple compact Lie
algebras and root systems (\cite[corollary~7.55]{Kirillov2008} or
\cite[corollary~7.3]{Knapp2002}), or equivalently, isogeny classes of
semisimple compact Lie groups or isomorphism classes of semisimple
\emph{simply connected} compact Lie groups
(\cite[\S7.3]{FultonHarris2004}); we clarify in~\ref{par:liefundgrp}
below the classification of groups inside an isogeny class.

Note that, as a set, $\Phi$ depends not only on $G$ but on the choice
of the maximal torus~$T$ used to define the weights
(cf.~\ref{par:charweights}), or equivalently, not only on
$\mathfrak{g}$ but also on $\mathfrak{t}$: in fact, $\Phi$ is a
subset of the dual $\mathfrak{t}^*$ of~$\mathfrak{t}$; however, as an
abstract root system, it does not depend on this choice.

\smallskip

The following proposition follows immediately from what has already
been said:

\begin{proposition}\label{prop:adjcharvals}
If $G$ is a semisimple compact Lie group with rank~$n$, then the range
of values taken by the adjoint character $\ch_{\adjt}$ of~$G$ is
precisely the range of the function $n + \mathscr{F}_\Phi$ where
$\mathscr{F}_\Phi\colon x \mapsto \sum_{\alpha \in \Phi} e^{2\pi i
  \alpha(x)}$ is the Fourier transform of~$\Phi$.

More precisely, if $T$ is a maximal torus in~$G$ with Lie algebra
$\mathfrak{t}$, and $u \in T$ is written $\exp(2\pi x)$ for $x \in
\mathfrak{t}$, then $\ch_{\adjt}(u) = n + \mathscr{F}_\Phi(x)$ (and we
have pointed out in~\ref{par:maxtori} that each element $g$ of $G$ is
conjugate to an element $u$ of~$T$, which then obviously has
$\ch_{\adjt}(g) = \ch_{\adjt}(u)$).
\end{proposition}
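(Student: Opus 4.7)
The plan is to compute $\ch_{\adjt}(u)$ directly for $u$ in the maximal torus $T$ via the weight space decomposition of the adjoint representation, and then to reduce the general case to this one using Cartan's conjugacy theorem.

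First I would invoke the decomposition recalled in~\ref{par:lieroots}: since $G$ is semisimple of rank $n$ with root system $\Phi \subset \mathfrak{t}^*$, the complexified Lie algebra splits as
\[
\mathfrak{g}_{\mathbb{C}} = \mathfrak{t}_{\mathbb{C}} \oplus \bigoplus_{\alpha \in \Phi} \mathfrak{g}_{\mathbb{C}}^\alpha,
\]
where each root space $\mathfrak{g}_{\mathbb{C}}^\alpha$ is one-dimensional and where $\mathfrak{t}_{\mathbb{C}}$ is the zero weight space for the $T$-action. Restricting $\Adjt$ to $T$, this is a decomposition into $T$-stable subspaces, so $\ch_{\adjt}|_T$ splits accordingly.

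Next I would compute the two contributions. On $\mathfrak{t}_{\mathbb{C}}$, the adjoint action of $T$ is trivial because $T$ is abelian (conjugation by $u \in T$ fixes $T$ pointwise, hence its differential fixes $\mathfrak{t}$ and its complexification); thus this summand contributes $\dim_{\mathbb{C}} \mathfrak{t}_{\mathbb{C}} = n$ to the trace. On each $\mathfrak{g}_{\mathbb{C}}^\alpha$, by the very definition of a weight space recalled in~\ref{par:charweights}, $T$ acts through the character $\mathbf{e}(\alpha)$; evaluating at $u = \exp(2\pi x)$ with $x \in \mathfrak{t}$ gives $\mathbf{e}(\alpha)(u) = e^{2\pi i \alpha(x)}$. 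Summing over $\alpha \in \Phi$ yields
\[
\ch_{\adjt}(u) \;=\; n + \sum_{\alpha\in\Phi} e^{2\pi i \alpha(x)} \;=\; n + \mathscr{F}_\Phi(x),
\]
which is the displayed identity of the proposition.

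Finally, for the statement about the range of values, I would apply Cartan's theorem from~\ref{par:maxtori} that every element $g \in G$ is conjugate to some $u \in T$. Since $\ch_{\adjt}(g) = \tr \Adjt(g)$ depends only on the conjugacy class of $g$, we have $\ch_{\adjt}(g) = \ch_{\adjt}(u)$, so the image of $\ch_{\adjt}$ on $G$ coincides with the image on $T$. Combined with the surjectivity of the exponential map $\mathbf{e}\colon \mathfrak{t} \to T$ noted in~\ref{par:toruslie}, this shows that the image of $\ch_{\adjt}$ equals the image of $n + \mathscr{F}_\Phi$ on $\mathfrak{t}$. There is no real obstacle here since every ingredient is already collected in the appendix; the only point demanding minor care is the triviality of the $T$-action on $\mathfrak{t}_{\mathbb{C}}$, which is immediate from the commutativity of $T$.
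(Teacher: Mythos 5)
Your proof is correct and follows essentially the same approach as the paper: both rest on the weight space decomposition $\mathfrak{g}_{\mathbb{C}} = \mathfrak{t}_{\mathbb{C}} \oplus \bigoplus_{\alpha\in\Phi}\mathfrak{g}_{\mathbb{C}}^\alpha$ from~\ref{par:lieroots}, reading off the weights of the adjoint representation ($0$ with multiplicity $n$, each $\alpha\in\Phi$ with multiplicity $1$) to get $\ch_{\adjt}|_T = n + \sum_{\alpha\in\Phi}\mathbf{e}(\alpha)$, and then invoking Cartan's conjugacy theorem from~\ref{par:maxtori}. You simply spell out the trace computation more explicitly than the paper does.
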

\begin{proof}
As explained in~\ref{par:lieroots}, the weights of the adjoint
representation are the elements of $\Phi$ each with multiplicity~$1$,
and $0$ with multiplicity~$n$, i.e., $\ch_{\adjt}|_T = n\cdot
\mathbf{e}(0) + \sum_{\alpha\in\Phi} \mathbf{e}(\alpha)$, which is
precisely the statement of the second paragraph.
\end{proof}

\parapoint\label{par:liefundgrp}
We briefly clarify the relation between isogenous
(cf.~\ref{par:liegroupsvsalgs}) compact Lie groups in the semisimple
case (this subsection is required for completeness, but for the
purposes of this paper we care only about the simply connected
groups):

If $G$ is a semisimple compact Lie group, we have noted its root
system $\Phi$ can be defined directly from its Lie algebra
$\mathfrak{g}$ and that $\mathfrak{t}$ of a maximal torus $T$ of $G$
(which is the same as a maximal abelian subalgebra of~$\mathfrak{g}$,
cf.~\ref{par:maxtori}), namely as the set of nonzero
$\alpha\in\mathfrak{t}^*$ (where $\mathfrak{t}^*$ is the dual vector
space to~$\mathfrak{t}$) such that $\mathfrak{g}_{\mathbb{C}}^\alpha
:= \{z \in \mathfrak{g}
\otimes_{\mathbb{R}} \mathbb{C} : [x,z] = i\alpha(x)\,z \text{~for
  all~} x\in\mathfrak{t}\}$ is nontrivial (cf.~\ref{par:charweights}).
So the root lattice $Q := \mathbb{Z}\Phi$ and weight lattice $P :=
(\mathbb{Z}\Phi^\vee)^*$ defined in~\ref{par:rootsystems},
inside~$\mathfrak{t}^*$, are defined at the Lie algebra level: they
depend only on the isogeny class of~$G$.  We have $Q \subseteq
\Gamma^* \subseteq P$ or equivalently $Q^\vee \subseteq \Gamma
\subseteq P^\vee$ where $Q^\vee$ is the coroot and $P^\vee$ the
coweight lattice (the inclusion $Q \subseteq \Gamma^*$ follows from
the fact that the weights of any representation of $G$, as defined in
\ref{par:charweights}, belong to~$\Gamma^*$, and in particular the
roots belong to~$\Gamma^*$; the inclusion $Q^\vee \subseteq
\Gamma$ follows from the fact that the coroots can also be defined at
the Lie algebra level).

The classification of compact Lie groups having Lie
algebra~$\mathfrak{g}$ is then as follows: $G$~is uniquely defined by
giving the lattice $\Gamma$ satisfying $Q \subseteq \Gamma^* \subseteq
P$, or equivalently $Q^\vee \subseteq \Gamma \subseteq P^\vee$; and
conversely, for any such $\Gamma$, there exists a unique corresponding
Lie group~$G$; furthermore, the fundamental group of~$G$ is Abelian,
finite and canonically isomorphic to $\Gamma/Q^\vee$, and the center
$Z(G)$ of~$G$ is finite and canonically isomorphic to~$P^\vee/\Gamma$.
(\cite[chap.~4, \S3, 6\textsuperscript{o}, theorems
  9~and~10]{OnishchikVinberg1990}; see also
\cite[theorem~23.1]{Bump2004} and \cite[corollary~5.109]{Knapp2002}.)

(In particular, the universal covering of a semisimple compact Lie
group $G$ is still compact, and corresponds to taking $\Gamma$ equal
to the coroot lattice~$Q^\vee$.  At the other extreme, the centerless
group $G/Z(G)$ corresponds to taking $\Gamma$ equal to the coweight
lattice~$P^\vee$; this is also known as the ``adjoint'' form, because
it is the image of the adjoint representation $G \to
\mathit{GL}(\mathfrak{g})$.  The fundamental group of the adjoint
form, or equivalently the center of the universal covering, is defined
at the Lie algebra level, and is~$P^\vee/Q^\vee$.)

\begin{remark}\label{rmk:weylalcove}
If $G$ is a semisimple compact Lie group with maximal torus~$T =
\mathfrak{t}/\Gamma$, we have already pointed out in~\ref{par:maxtori}
that the set of conjugacy classes of~$G$ can be identified (as a set)
with $T/W$, where $W$ is the Weyl group.  Lifting to the Lie algebra
$\mathfrak{t}$ of~$T$, it can be identified with $\mathfrak{t}/(\Gamma
\rtimes W)$.  This point of view is particularly important when $G$ is
simply connected ($\Gamma$~is the coroot lattice) because then it can
be shown that the ``affine Weyl group'' $\Gamma \rtimes W$ is an
affine Coxeter group, having a fundamental domain, known as the
\emph{Weyl alcove}, which is the simplex whose vertices are $0$ and
the $\varpi_i^\vee/m_i$, where the $\varpi_i^\vee$ are the fundamental
coweights (cf.~\ref{par:posroots}) and $m_i$ are the coefficients of
the highest root (cf.~\ref{par:posroots}).  (See
\cite[chapter~11]{Kane2001}.)
\end{remark}

\parapoint\label{par:highestweight}
We now briefly review the classification of irreducible
representations of a semisimple compact Lie group as provided by
``highest weight theory''.

As explained in~\ref{par:charweights}, the weights of a
(finite-dimensional) representation $V$ of a semisimple compact Lie
group~$G$ (relative to the choice of a maximal torus $T \subseteq G$)
are the $\lambda \in \Gamma^*$ such that $V^\lambda := \{z\in V :
(\forall u\in T)\, u \cdot z = \mathbf{e}(\lambda)(u)\}$ is nonzero,
the multiplicity $m_\lambda$ being $\dim V^\lambda$.  Now fix a choice of
positive roots of~$G$ (cf.~\ref{par:posroots}): a \emph{highest
  weight} of~$V$ (or of its character,~$\chi$) is a weight $\lambda$
such that $\lambda+\alpha$ is not a weight for any positive root
$\alpha$; a \emph{dominant integral weight} (for~$G$) is a
$\lambda\in\Gamma^*$ belonging to the closed Weyl chamber, i.e., such
that $\lambda(\alpha^\vee) \geq 0$ for each simple (or equivalently,
positive) coroot~$\alpha^\vee$.

Highest weight theory tells us that (\cite[theorem~5.5]{Knapp2002},
\cite[\S8.3]{Kirillov2008} or \cite[chap.~4, \S3,
  7\textsuperscript{o}, theorem~11]{OnishchikVinberg1990} or
\cite[VI.1.7]{BroeckerDieck1985}):
\begin{itemize}
\item every irreducible representation of~$G$ has a unique highest
  weight, which is a dominant integral weight, and its multiplicity
  is~$1$;
\item if $V$ is an irreducible representation of~$G$ with highest
  weight~$\lambda$, then the set of weights of $V$ is the intersection
  of $\lambda+Q$, where $Q$ is the root lattice, and of the convex
  hull of the orbit of $\lambda$ under the Weyl group;
\item two irreducible representations of~$G$ are isomorphic if and
  only if they have the same highest weight (thus, we can speak of
  ``the'' irreducible representation with highest weight~$\lambda$);
\item every dominant integral weight is the highest weight of an
  irreducible representation of~$G$ (it is unique by the previous
  point);
\item if $V$ and $V'$ are irreducible representations of~$G$ with
  highest weights $\lambda$ and $\lambda'$ respectively, then
  $V\otimes V'$ has highest weight $\lambda+\lambda'$ and has a unique
  irreducible factor with that weight
  (\cite[VI.2.8]{BroeckerDieck1985}).
\end{itemize}

The highest weight of the adjoint representation is the highest root
($-\alpha_0$).

\parapoint\label{par:weylcharform} If $G$ is a semisimple compact Lie
group and $T$ a maximal torus of $G$, then the \emph{Weyl character
  formula} (\cite[\S8.5]{Kirillov2008},
\cite[theorems~5.75--5.77]{Knapp2002} or
\cite[VI.1.7]{BroeckerDieck1985}) expresses the value of the
irreducible character $\chi_\lambda$ with highest weight~$\lambda$
(i.e., the character of the irreducible representation with highest
weight~$\lambda$) as the ratio of two skew-$W$-invariant polynomials
on~$T$, namely
\[
\ch_\lambda = \frac
{\sum_{w\in W} \mathrm{sgn}(w)\,\mathbf{e}(w(\lambda+\rho))}
{\sum_{w\in W} \mathrm{sgn}(w)\,\mathbf{e}(w(\rho))}
\]
where $W$ is the Weyl group and $\mathrm{sgn} \colon W \to \{\pm 1\}$
the group homomorphism taking the value $-1$ on each reflection
$s_\alpha$ (i.e., $\mathrm{sgn}(w)$ is the determinant of $w$ acting
on the Lie algebra $\mathfrak{t}$ of~$T$); and the \emph{Weyl vector}
$\rho := \frac{1}{2}\sum_{\alpha\in\Phi_+} \alpha$ is half the sum of
the positive roots, which is also the sum $\sum_{i=1}^n \varpi_i$ of
the fundamental weights.  The denominator of the above expression can
be factored using the \emph{Weyl denominator formula}:
\[
\sum_{w\in W} \mathrm{sgn}(w)\,\mathbf{e}(w(\rho))
= \prod_{\alpha\in\Phi_+}(\mathbf{e}(\alpha/2) - \mathbf{e}(-\alpha/2)).
\]

\parapoint\label{par:fundchars}
Assuming that $G$ (still a semisimple compact Lie group) is simply
connected (so that $\Gamma^*$ is the weight lattice,
cf.~\ref{par:liefundgrp}), the irreducible representations having the
fundamental weights (cf.~\ref{par:posroots}) as highest weights are
known as \emph{fundamental representations}, and their characters as
the \emph{fundamental characters} of~$G$.

\parapoint\label{par:charring}
The \emph{character ring} of a compact Lie group~$G$ is the ring
generated by the characters of~$G$ (that is, the set of differences
$\chi_1 - \chi_2$ between two characters of~$G$) for pointwise sum and
product.  Equivalently, if we define a \emph{virtual representation}
of~$G$ to be the formal difference $V_1 \ominus V_2$ of two (finite
dimensional) representations, identifying $V_1 \ominus V_2$ with $V'_1
\ominus V'_2$ whenever $V_1 \oplus V'_2$ and $V'_1 \oplus V_2$ are
isomorphic (``Grothendieck ring'' construction), and if we define the
(virtual) character of $V_1 \ominus V_2$ to be $\chi_1 - \chi_2$ where
$\chi_i$ is the character of~$V_i$, the character ring can be defined
as the set of virtual representations of~$G$ with addition and
multiplication being defined as the direct sum and tensor product
(extended in the obvious fashion to virtual representations).

To put it differently, the character ring of~$G$ consists of
$\mathbb{Z}$-linear combinations of the irreducible characters (or
representations) of~$G$, the product being defined by the
decomposition into irreducibles of a product of characters (tensor
product of representations).  One can similarly define the character
$\mathbb{C}$-algebra as the set of $\mathbb{C}$-linear combinations of
the irreducible characters (or representations) of~$G$.

Highest weight theory implies that: (1)~for a semisimple compact Lie
group~$G$ with maximal torus~$T$, the character ring of~$G$ is simply
the invariant part under the Weyl group of the character ring of~$T$
(the latter being the group ring of~$\Gamma^*$,
cf.~\ref{par:torusrepns}), and (2)~when $G$ is, additionally, simply
connected, the character ring is isomorphic to the polynomial algebra,
with coefficients in $\mathbb{Z}$, over indeterminates corresponding
to the fundamental representations.  (\cite[VI.2.1 and
  VI.2.11]{BroeckerDieck1985}.)  The corresponding statements also
hold with complex coefficients instead of integers.

\begin{remark}
If $G$ is a semisimple simply connected compact Lie group with maximal
torus~$T = \mathfrak{t}/\Gamma$, then the character
$\mathbb{C}$-algebra of~$G$ can be identified (via restriction to~$T$)
with the set of $W$-invariant trigonometric polynomials on~$T$ (with
complex coefficients), where $W$ is the Weyl group, or, lifting to
$\mathfrak{t}$, of $W$-invariant (hence $\Gamma \rtimes W$-invariant)
combinations of the $\mathbf{e}(\lambda)$ for $\lambda \in \Gamma^*$.
Also note that such functions are entirely defined by their values on
the Weyl alcove (cf.~\ref{rmk:weylalcove}).

If we are mostly interested in the character values on~$T$ (they
determine those on~$G$ by~\ref{par:maxtori}), and in this paper we
are, the irreducible representations of~$G$ are something of a
needless complication: the character ring of a semisimple simply
connected compact Lie group has a $\mathbb{Z}$-basis consisting of the
sums $\sum_{\lambda \in W\lambda_0} \mathbf{e}(\lambda)$ for $\lambda$
ranging over an orbit of the Weyl group $W$ acting on~$\Gamma^*$.
\end{remark}

\parapoint\label{par:regularelements}
We have recalled in~\ref{par:maxtori} that every element $g$ of a
compact Lie group $G$ belongs to a maximal torus~$T$; when the torus
in question is \emph{unique}, the element $g$ is said to be
\emph{regular}.  Assuming that $G$ is semisimple, this is equivalent
(\cite[theorem~22.3(ii)]{Bump2004}) to saying that $g$ is not in the
kernel of any $\mathbf{e}(\alpha)$ for root $\alpha\in\Phi$
(cf.~\ref{par:torusrepns}).  Correspondingly, we say that an element
$x$ of the Lie algebra $\mathfrak{g}$ of~$G$ is regular when
$\mathbf{e}(x)$ is regular, i.e., when $\alpha(x) \not\in \mathbb{Z}$
for all $\alpha\in\Phi$ (this means that $x$ is represented by an
element in the interior of the Weyl alcove, cf.~\ref{rmk:weylalcove};
also compare \cite[V.7.8]{BroeckerDieck1985}).

The following fact is crucial to the proof given in
appendix~\ref{sec:serresproof}:

\begin{proposition}\label{prop:regularelementdifferentiels}
Let $G$ be a semisimple simply connected compact Lie group.  If $g \in
G$ and $T$ is a maximal torus containing $g$, then the following are
equivalent:
\begin{itemize}
\item the element $g$ is regular,
\item the differentials $d\ch_1,\ldots,d\ch_n$ of the fundamental
  characters of~$G$ (cf.~\ref{par:fundchars}) are independent
  at~$g$,
\item the differentials $d\ch_1|_T,\ldots,d\ch_n|_T$ of the
  fundamental characters of~$G$ restricted to~$T$ are independent
  at~$g$.
\end{itemize}
\end{proposition}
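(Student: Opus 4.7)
My plan is to prove the cycle $(3) \Rightarrow (2) \Rightarrow (1) \Rightarrow (3)$, of which only the first implication is immediate.

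The implication $(3) \Rightarrow (2)$ is trivial: because $T \subseteq G$, each $d(\chi_i|_T)_g$ is the restriction of $d\chi_i|_g$ to $T_g T \subseteq T_g G$, so linear independence of the restricted functionals forces linear independence of the originals.

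For $(2) \Rightarrow (1)$ I will argue the contrapositive. If $g$ is not regular, then by~\ref{par:regularelements} some root $\alpha$ satisfies $\mathbf{e}(\alpha)(g) = 1$, so the centralizer $Z_G(g)$---connected because $G$ is simply connected, by Steinberg's theorem---has Lie algebra $\mathfrak{g}^g$ strictly containing $\mathfrak{t}$. Identifying $T_g G$ with $\mathfrak{g}$ via left translation, each $d\chi_i|_g$ is the differential of a class function, hence vanishes on the image of $I - \Adjt(g)$ and is fully determined by its restriction to $\mathfrak{g}^g$; being $\Adjt(Z_G(g))$-invariant, this restriction further annihilates $[\mathfrak{g}^g, \mathfrak{g}^g]$ (by differentiating the invariance at the identity), so factors through the reductive quotient $\mathfrak{g}^g / [\mathfrak{g}^g, \mathfrak{g}^g] \cong Z(\mathfrak{g}^g)$. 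Now $Z(\mathfrak{g}^g)$ is the common kernel in $\mathfrak{t}$ of the roots vanishing at~$g$, and the presence of~$\alpha$ forces $\dim Z(\mathfrak{g}^g) < n$; so the $n$ functionals $d\chi_i|_g$ lie in a space of dimension less than~$n$ and must be linearly dependent.

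For $(1) \Rightarrow (3)$, I will rely on the identification (from~\ref{par:charring}) of the character $\mathbb{C}$-algebra $\mathbb{C}[G]^G \cong \mathbb{C}[T]^W$ with the polynomial $\mathbb{C}$-algebra on the fundamental characters, valid because $G$ is simply connected. Lifting through $\mathbf{e}\colon \mathfrak{t} \to T$, the map $F = (\chi_1|_T \circ \mathbf{e}, \ldots, \chi_n|_T \circ \mathbf{e})$ is invariant under the affine Weyl group $\Gamma \rtimes W$, which acts as a reflection group on~$\mathfrak{t}$ (cf.~\ref{rmk:weylalcove}). The Chevalley--Shephard--Todd Jacobian formula, applied locally, then shows that the Jacobian determinant $\det(\partial F_i/\partial x_j)$ vanishes exactly on the union of the reflection hyperplanes $\{\alpha(x) \in \mathbb{Z} : \alpha \in \Phi\}$, which is precisely the non-regular locus; concretely, up to a nonzero constant this Jacobian equals the Weyl denominator $\prod_{\alpha \in \Phi_+}(\mathbf{e}(\alpha/2) - \mathbf{e}(-\alpha/2))$ of~\ref{par:weylcharform}. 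At a regular $g = \mathbf{e}(x)$ the Jacobian is nonzero, which is exactly the linear independence of the $d(\chi_i|_T)_g$.

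The main technical obstacle is pinning down the Jacobian identity in $(1) \Rightarrow (3)$: one must check that the locally-finite product of reflection-hyperplane equations matches (a constant multiple of) the Jacobian of the fundamental-character map. This is essentially Steinberg's theorem~\cite[Theorem~8.1]{Steinberg1965} on regular elements of semisimple groups, and can alternatively be extracted from the Weyl character and denominator formulas of~\ref{par:weylcharform} together with the expressibility of each fundamental character as a suitable Weyl-invariant combination of the $\mathbf{e}(\lambda)$.
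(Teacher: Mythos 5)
The paper does not actually prove this proposition; it cites Kostant's and Steinberg's theorems for the equivalence of the first two statements and points to \cite[\S8.2--8.6]{Steinberg1965} for the (simpler) compact case, where the third statement falls out as a byproduct. Your sketch reconstructs exactly that line of argument. The implications $(3)\Rightarrow(2)$ (restriction) and $(2)\Rightarrow(1)$ (dimension count on $Z(\mathfrak{g}^g)$) are essentially complete as you write them. One small remark on $(2)\Rightarrow(1)$: Steinberg's connectedness theorem for $Z_G(g)$ is not needed there, since differentiating the conjugation-invariance at the identity only ever sees the identity component of the centralizer, whose Lie algebra is $\mathfrak{g}^g$ regardless of whether $Z_G(g)$ is connected.

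The genuine gap is in $(1)\Rightarrow(3)$, which rests on the unproved assertion that the Jacobian of the lifted fundamental-character map equals, up to a nonzero constant, the Weyl denominator. The ``local Chevalley--Shephard--Todd'' justification you offer does not work as stated: the affine Weyl group is not a linear reflection group, and while the isotropy subgroup at a point of $\mathfrak{t}$ is indeed a finite reflection group, the restrictions of the fundamental characters are not a set of basic invariants for that local action, so the CST Jacobian formula cannot be applied directly. The alternative you mention in passing --- observe that the Jacobian is a skew-invariant trigonometric polynomial, hence divisible by the Weyl denominator, and match leading exponents (each $\ch_i|_T$ contributes $\varpi_i$, summing to $\rho$) to conclude the quotient is a nonzero constant --- does close the gap rigorously, but you do not carry it out. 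As it stands, you have correctly reduced the proposition to the same Steinberg reference the paper cites, without yet making the decisive step self-contained.
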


In a more general context, the equivalence of the first two statements
is due to Kostant (\cite[theorem~0.1]{Kostant1963}) and Steinberg
(\cite[theorem~8.1]{Steinberg1965}); however, since we are only
considering compact Lie group, every element $g$ belongs to a maximal
torus (i.e., is ``semisimple''), making the proof of the equivalence
considerably easier and giving the third statement as a byproduct (as
detailed in \cite[\S8.2--8.6]{Steinberg1965}).

\parapoint\label{par:parabolics}
We now briefly discuss how a subset of the nodes of the Dynkin diagram
of a semisimple compact Lie group defines a Lie subgroup with the
Dynkin diagram defined by the subset in question (i.e., the induced
subgraph).

So let $G$ be a semisimple compact Lie group, fix a maximal torus $T$
in~$G$, and let $\mathfrak{g},\mathfrak{t}$ be the corresponding Lie
algebras and $\Phi$ the root system of~$G$ (cf.~\ref{par:lieroots});
choose a system of simple roots $\alpha_1,\ldots,\alpha_n \in \Phi$
(where $n$ is the rank of~$G$).  Now if $I \subseteq \{1,\ldots,n\}$,
this defines a root system $\Phi_I \subseteq \Phi$, sometimes known as
the \emph{parabolic} subsystem associated to~$I$, namely the set
$\Phi_I := \Phi \cap \bigoplus_{i\in I} \mathbb{Z}\alpha_i = \Phi \cap
\bigoplus_{i\in I} \mathbb{R}\alpha_i$ ``generated by'' the $\alpha_i$
for $i\in I$ (see \cite[\S5.1]{Kane2001} for a discussion, or
\cite[\S12.1]{MalleTesterman2011}), so that its Dynkin diagram
consists of the nodes of that of $\Phi$ labeled by elements of~$I$.

We now fix such an $I$ and explain how to define a Lie subgroup of $G$
with root system~$\Psi := \Phi_I$.  See also
\cite[\S7.3--7.4]{Arvanitoyeorgos2003} for a more detailed and
pedagogical account of this construction.

For $\alpha \in \Phi$, let $\mathfrak{g}_{\mathbb{C}}^\alpha := \{z
\in \mathfrak{g}_{\mathbb{C}} : [x,z] = i\alpha(x)\,z \text{~for all~}
x\in\mathfrak{t}\}$ inside $\mathfrak{g}_{\mathbb{C}} := \mathfrak{g}
\otimes_{\mathbb{R}} \mathbb{C}$ be the corresponding weight space.
Then (see \cite[corollary~5.94]{Knapp2002}, or
\cite[proposition~12.6]{MalleTesterman2011} in a different context)
$\mathfrak{l}_{\mathbb{C}} := \mathfrak{t}_{\mathbb{C}} \oplus
\bigoplus_{\alpha\in\Psi} \mathfrak{g}_{\mathbb{C}}^\alpha$ is a
(complex) Lie subalgebra of $\mathfrak{g}_{\mathbb{C}}$ (sometimes
known as a ``parabolic Levi factor''), which can be further factored
as a Lie algebra direct sum (i.e., with trivial bracket between the
summands) of its center $\mathfrak{z}(\mathfrak{l}_{\mathbb{C}}) =
\bigcap_{\alpha\in\Psi} \ker\alpha \subseteq
\mathfrak{t}_{\mathbb{C}}$ and its semisimple subalgebra
$\mathfrak{l}_{\mathbb{C}}' = [\mathfrak{l}_{\mathbb{C}},
  \mathfrak{l}_{\mathbb{C}}] = \mathfrak{t}'_{\mathbb{C}} \oplus
\bigoplus_{\alpha\in\Psi} \mathfrak{g}_{\mathbb{C}}^\alpha$ where
$\mathfrak{t}'_{\mathbb{C}}$ is the complex subspace spanned by the
coroots $\alpha^\vee$ for~$\alpha\in\Psi$ (seen as elements of
$\mathfrak{t}_{\mathbb{C}}$).  Since $\mathfrak{l}_{\mathbb{C}}$ and
$\mathfrak{l}'_{\mathbb{C}}$ are stable under complex conjugation,
they define (real!) Lie subalgebras $\mathfrak{l} :=
\mathfrak{l}_{\mathbb{C}} \cap \mathfrak{g}$ and $\mathfrak{l}' :=
\mathfrak{l}'_{\mathbb{C}} \cap \mathfrak{g}$ of $\mathfrak{g}$, hence
compact Lie subgroups $L$, $L'$ of $G$ having these Lie algebras (see
\cite[theorem~5.114]{Knapp2002}).  Then $L'$ is a semisimple Lie
subgroup of~$G$ having root system~$\Psi$ and maximal torus $T'$ with
Lie algebra $\mathfrak{t}'$ (as for $L$, it is isogenous to the
product of $L'$ with a torus of rank $n-\#I$, so that it has the same
rank $n$ as~$G$).

(In fact, the only properties of $\Psi$ used above are that it is a
``closed'' subsystem of $\Phi$: see \cite[definition~13.2 and
  theorem~13.6]{MalleTesterman2011}.)

In appendix~\ref{sec:serresproof}, we will use the above construction
in the case where $I$ is the complement of a single node $\{i\}$ in
the Dynkin diagram.

\section{Serre's result on characters of compact Lie groups}
\label{sec:serresproof}

In this section, we prove the $\mathsf{E}_n$ cases of \cite[Theorem
  3']{Serre2004a}.  The proof in the $\mathsf{E}_7$ and~$\mathsf{E}_8$
cases has been kindly communicated to us by J.-P.~Serre and only
slightly adapted for symbolic computation with Sage (J.-P.~Serre was
able to perform the entire computation by hand and we have not
attempted to reproduce this feat; any errors in the following
expositions are, of course, entirely our own) and straightforwardly
extended to compute all critical values of the adjoint character; in
the $\mathsf{E}_6$ case, J.-P.~Serre referred us to a proof devised by
A.~Connes, which we do not follow here, preferring instead a
straightforward analogue of the $\mathsf{E}_7$ and~$\mathsf{E}_8$
cases, at the cost of considerably more computing power (the
$\mathsf{E}_6$ case does not seem doable by hand with the technique
presented below).

\begin{theorem}
\label{thm:serrestheorem}
If $G$ is a semisimple compact Lie group of type $\mathsf{E}_6$, $\mathsf{E}_7$ or~$\mathsf{E}_8$
respectively, and $\ch_{\adjt}$ its adjoint character
(cf.~\ref{par:lierepns}); then $\inf_{g\in G} \ch_{\adjt}(g)$ is equal
to $-3$, $-7$ or $-8$ respectively.
\end{theorem}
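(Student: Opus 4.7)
The plan is to use Proposition~\ref{prop:adjcharvals} to rephrase the statement: since $\ch_{\adjt}(u) = n + \mathscr{F}_\Phi(x)$ when $u = \exp(2\pi x)$ lies in a maximal torus, and since every element of $G$ is conjugate into the maximal torus (\ref{par:maxtori}), we need to show that $\inf_{x \in \mathfrak{t}} \mathscr{F}_\Phi(x)$ equals $-9$, $-14$, $-16$ for $\Phi$ of type $\mathsf{E}_6, \mathsf{E}_7, \mathsf{E}_8$ respectively. Because $\mathscr{F}_\Phi$ is continuous and $(\Gamma \rtimes W)$-invariant (\ref{par:charweights}), the infimum is attained at a critical point on the Weyl alcove (Remark~\ref{rmk:weylalcove}), which I will analyze by splitting into regular and non-regular critical points.

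The heart of the proof exploits the fact that, since $G$ is simply connected, its character $\mathbb{C}$-algebra is a polynomial ring $\mathbb{C}[\ch_1,\ldots,\ch_n]$ in the fundamental characters (\ref{par:charring}). Writing $\ch_{\adjt} = P(\ch_1,\ldots,\ch_n)$ for an explicit polynomial~$P$ (computable from the decomposition of $\mathrm{Ad}$ and tensor product rules; for $\mathsf{E}_8$, the adjoint representation is itself fundamental, which simplifies matters), at any regular critical point $g \in G$ we have
\[
0 \;=\; d\ch_{\adjt}(g) \;=\; \sum_{i=1}^n \frac{\partial P}{\partial\ch_i}(g)\, d\ch_i(g).
\]
By Proposition~\ref{prop:regularelementdifferentiels} the $d\ch_i(g)$ are linearly independent, forcing $\frac{\partial P}{\partial\ch_i} = 0$ at $g$ for every $i$. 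This yields a system of $n$ polynomial equations in the $n$ unknowns $\ch_1,\ldots,\ch_n$, which can be solved (for $\mathsf{E}_7$ by hand \emph{\`a la} Serre, for $\mathsf{E}_6$ and $\mathsf{E}_8$ via Gröbner bases in Sage). For each algebraic solution $(c_1,\ldots,c_n)$ I would verify realizability by a group element $g \in G$ (the image of $g \mapsto (\ch_1(g),\ldots,\ch_n(g))$ is a semi-algebraic subset of $\mathbb{R}^n$ whose description can be extracted from the Weyl character formula~\ref{par:weylcharform}), and then evaluate $P(c_1,\ldots,c_n)$ to obtain a candidate critical value.

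The non-regular critical points require a separate treatment: by definition such a $g$ satisfies $\alpha(x) \in \mathbb{Z}$ for some root $\alpha$ (with $g = \exp(2\pi x)$), so $x$ lies on a wall of the Weyl alcove, which corresponds to a proper subset $I$ of the nodes of the \emph{extended} Dynkin diagram. The centralizer of such $g$ contains a semisimple subgroup $L'_I$ whose root system is the closed subsystem $\Phi_I$ (extending the construction of~\ref{par:parabolics} to the affine diagram), and the restriction of $\ch_{\adjt}^G$ to $L'_I$ breaks up as $\mathrm{rank}(\mathfrak{z}) + \ch_{\adjt}^{L'_I} + \text{(weights of a complementary $L'_I$-representation evaluated at~}g\text{)}$. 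Since the proper parabolic subsystems of $\mathsf{E}_n$ are of type $\mathsf{A}$, $\mathsf{D}$, or lower~$\mathsf{E}$, one reduces inductively to Theorems~\ref{thm:lowera} and~\ref{thm:lowerd} together with the smaller $\mathsf{E}$ cases; verifying that these boundary values never beat the regular minimum completes the argument.

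The main obstacle is solving the polynomial system $\partial P/\partial\ch_i = 0$ for $\mathsf{E}_8$: the polynomial $P$ expressing $\ch_{\adjt}^{\mathsf{E}_8}$ in terms of the eight fundamental characters has substantial degree, and the Gröbner basis computation is delicate because most algebraic solutions are not realized by any actual $g \in G$ and must be filtered out by checking positivity conditions coming from the Weyl denominator formula. Once this filtering is done, only finitely many candidate critical values survive, and one reads off the smallest, obtaining $-9, -14, -16$ for $\mathscr{F}_\Phi$ and hence $-3, -7, -8$ for $\ch_{\adjt}$.
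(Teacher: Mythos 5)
Your proposal correctly identifies the two key tools (Proposition~\ref{prop:adjcharvals} and the Kostant--Steinberg criterion~\ref{prop:regularelementdifferentiels}), but it misallocates the work between the ``regular'' and ``non-regular'' cases in a way that leaves a real gap. For all three of $\mathsf{E}_6$, $\mathsf{E}_7$, $\mathsf{E}_8$ the adjoint representation \emph{is} a fundamental representation ($\ch_{\ad} = \ch_2^{\mathsf{E}_6}$, $\ch_1^{\mathsf{E}_7}$, $\ch_8^{\mathsf{E}_8}$), so the polynomial $P$ you write is just a coordinate, e.g.\ $P = \ch_8$ for $\mathsf{E}_8$. Then $\partial P/\partial\ch_8 \equiv 1$, and the system $\partial P/\partial\ch_i = 0$ has \emph{no} solutions at all. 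The regular-point analysis, which you call ``the heart of the proof'' and describe as a delicate Gr\"obner-basis problem for an $\mathsf{E}_8$ polynomial of ``substantial degree,'' is in fact vacuous: there are no regular critical points whatsoever, and nothing to solve. (Your own parenthetical noting that $\mathrm{Ad}$ is fundamental for $\mathsf{E}_8$ actually contradicts the later ``substantial degree'' remark.)

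Consequently, \emph{all} critical points are non-regular, and the part you treat as a side case is the entire proof. Your sketch of that case is too coarse to go through as written. Restricting $\ch_{\ad}^G$ to a wall subgroup does not reduce the problem to the adjoint character of the smaller group plus an inductive appeal to Theorems~\ref{thm:lowera} and~\ref{thm:lowerd}; rather, one obtains an explicit expression in the fundamental characters of the subgroup via branching rules (e.g.\ $\ch_2^{\mathsf{E}_6}|_{\mathsf{A}_5} = 2\ch_3 + \ch_1\ch_5 + 2$, or $\ch_8^{\mathsf{E}_8}|_{\mathsf{E}_7} = \ch_1 + 2\ch_7 + 3$), which is \emph{not} the adjoint character of the subgroup. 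The paper's actual argument uses the affine Weyl group's transitivity on the family of hyperplanes $\{\alpha = m\}$ to put the critical point on the specific wall $\{\alpha_0 = 0\}$, which (since $-\alpha_0 = \varpi_i$ for these types) lands it in the rank-$(n-1)$ subgroup obtained by deleting node~$i$; then it observes that the restricted expression is again of a form to which Kostant--Steinberg applies (no fundamental character appearing more than once), and \emph{iterates} this reduction until one reaches a group small enough ($\mathsf{A}_5$ for $\mathsf{E}_6$, $\mathsf{A}_1\times(\mathsf{A}_1)^3$ for $\mathsf{E}_8$) that Gr\"obner-basis elimination, followed by a reality check ruling out spurious complex critical values, becomes tractable. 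Your outline contains the right atoms, but it lacks the iterative reduction mechanism, and it would not produce a computation one could actually carry out.
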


(The proof for $\mathsf{A}_n$ and $\mathsf{D}_n$ has been given in
\ref{thm:lowera}~and~\ref{thm:lowerd} respectively.)

In each case, we divide the proof in two steps: the \emph{reduction step}
and the \emph{computation step}, the second being itself
subdivided into an \emph{elimination substep} and a \emph{ruling-out substep}.

We will call $G$ the simply connected semisimple compact Lie group of
type $\mathsf{E}_6$, $\mathsf{E}_7$ or~$\mathsf{E}_8$ as the case may
be, $T$ its maximal torus (cf.~\ref{par:maxtori}) and $\mathfrak{t}$
the Lie algebra of the latter.

The reduction step uses the trick (\textdagger) explained below
(and based essentially on~\ref{prop:regularelementdifferentiels}) to
reduce the number of variables by observing that any critical point of
$\ch_{\ad}$ must lie on certain linear subspaces of $\mathfrak{t}$.
The \emph{computation step} then finds the critical values of some
polynomial function $h$ of several variables $x_1,\ldots,x_r$ by
elimination theory: there are slight variations in each of the cases
below, but broadly speaking, consider the ideal of
$\mathbb{C}[x_1,\ldots,x_r,y]$ generated by
$\frac{\partial h}{\partial x_i}$ and $y-h$ (defining the --- often
$0$-dimensional --- algebraic variety of critical points of $h$) and
use a Gr\"obner basis for some elimination order (that is, a monomial
order such that $y < x_1^{i_1} \cdots x_r^{i_r}$ for all
$i_1,\ldots,i_r$ not all zero) to find the projection of this variety
on the $y$ coordinate (represented by a polynomial in $y$ which one
factors to find the actual values, which for some currently mysterious
reason happen to be always rational); unfortunately, elimination
theory considers all \emph{complex} values of $x_1,\ldots,x_r$, so there are
many spurious values, and one must then consider each computed value,
or at least those that are smaller than the actual minimum, and rule
them out by showing that, for some reason, they cannot be realized for
real values $x_1,\ldots,x_r$ (generally by noticing that some other
element in the Gr\"obner basis does not have roots in the domain
considered).

Let us now explain the idea of the reduction step in more detail.  We
generally follow appendix~\ref{sec:liegroups} for notation: for
example, we call $\Phi$ the root system of~$G$.

The reduction trick (\textdagger) is as follows.  Suppose $z \in
\mathfrak{t}$ is a critical value of the adjoint character (which is a
fundamental character in each of $\mathsf{E}_6$, $\mathsf{E}_7$ and
$\mathsf{E}_8$), or more generally that it is a critical value of any
polynomial in the fundamental characters in which no fundamental
character appears more than once; then the result of Kostant and
Steinberg~\ref{prop:regularelementdifferentiels} implies that $z$ is
on a root hyperplane $\mathfrak{t}' = \{\alpha = m\}$ (where
$\alpha\in\Phi$ and $m\in\mathbb{Z}$); now the affine Weyl group
$\Gamma \rtimes W$ (cf.~\ref{rmk:weylalcove}) acts transitively on the
set of such root hyperplanes and preserves all character values, so we
can assume that the $z$ lies on the root hyperplane $\{\alpha_0 =
0\}$, where $-\alpha_0$ is the highest root, which (for
$\mathsf{D}_n$, $\mathsf{E}_6$, $\mathsf{E}_7$, $\mathsf{E}_8$) is
also one of the fundamental weight, say $\varpi_i$. (This $i$ can be
read by taking the \emph{extended} Dynkin diagram for $\Phi$: it is
the node to which the extender node attaches.)  In other words, the
coordinates of $z$ on the basis of coroots
$\alpha^\vee_1,\ldots,\alpha^\vee_n$ have a zero at coordinate~$i$
(the one which is measured by~$\varpi_i$): so $z$ lives, in fact, in
the hyperplane generated by $\alpha^\vee_j$ for $j\neq i$.  Now, as
explained in~\ref{par:parabolics}, the linear subspace $\mathfrak{t}'$
of $\mathfrak{t}$ generated by $\alpha^\vee_j$ for $j\in I$ (here $I
:= \{1,\ldots,n\} \setminus \{i\}$) is, in a natural way, the Lie
algebra of the maximal torus $T'$ of a semisimple Lie subgroup $G'$
of~$G$ (denoted $L'$ in~\ref{par:parabolics}), whose root system
$\Phi'$ has a Dynkin diagram obtained from that of $\Phi$ by keeping
only the roots labeled by an element of~$I$, so, in our case, by
deleting node~$i$.  So by restricting the character (initially the
adjoint character of~$G$) to $G'$, we are left with a character on a
Lie group $G'$ having a rank smaller by~$1$.  Of course, the character
restricted to the subgroup $G'$ in question is no longer the adjoint
character, but it can be expressed in terms of the fundamental
characters of $G'$ using standard tables of ``branching rules'' or a
computer program like
Sage\footnote{\url{http://doc.sagemath.org/html/en/thematic_tutorials/lie/branching_rules.html}}
(in fact, branching are typically given first for the restriction from
$G$ to an intermediate subgroup $G \supseteq G_1 \supseteq G'$,
maximal in~$G$, and described through the removal of the node $i$ from
the \emph{extended} Dynkin diagram of~$G$ by means of so-called
``Borel-de~Siebenthal theory: see \cite[\S8.10]{Wolff2011} or
\cite[chapter~12]{Kane2001}; the restriction from $G_1$ to $G$ is then
straightforward as $G$ is a factor of~$G_1$, and we will give both in
what follows).

In what follows, we number the nodes of the Dynkin diagrams as in
Bourbaki (cf.~figure~\ref{FigureDynkinDiagramNumbering}).  We write $\ch_i^G$ for the $i$-th fundamental
representation of the simple group $G$, and $\ch_{\ad}$ for the
adjoint representation: thus, $\ch_{\ad}^{\mathsf{E}_6} = \ch_2^{\mathsf{E}_6}$ and
$\ch_{\ad}^{\mathsf{E}_7} = \ch_1^{\mathsf{E}_7}$ and $\ch_{\ad}^{\mathsf{E}_8} = \ch_8^{\mathsf{E}_8}$.
More generally, we write $\ch_\lambda$ for the character with highest
weight $\lambda$ (written as a combination of fundamental weights
$\varpi_i$), so that $\ch_i$ is an abbreviation for $\ch_{\varpi_i}$.

\begin{theorem}\label{Theorem_CritValues_E6}
The set of critical values of $\ch_{\ad}^{\mathsf{E}_6}$ is $-3$, $-2$, $6$, $14$ and $78$.
We therefore have $\inf_{x} {\mathcal F}_{\mathsf{E}_6} = -9$ and $\chi({\mathsf E}_6) \geq 9$.
\end{theorem}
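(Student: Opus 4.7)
The plan is to apply the strategy $(\dagger)$ used below for $\mathsf{E}_7$ and $\mathsf{E}_8$. By Proposition~\ref{prop:regularelementdifferentiels}, any critical point $z\in\mathfrak{t}$ of $\ch_{\ad}^{\mathsf{E}_6}$ cannot be regular, hence lies on some root hyperplane; using the affine Weyl group I may place $z$ on $\{\alpha_0=0\}$, which for $\mathsf{E}_6$ coincides with $\{\varpi_2=0\}$ since the extending node of the extended Dynkin diagram attaches to node~$2$. This forces $z$ to lie in the rank-$5$ Cartan subalgebra $\mathfrak{t}'$ of the $\mathsf{A}_5$ semisimple Lie subgroup $G'$ of $\mathsf{E}_6$ obtained by deleting node~$2$ from the ordinary Dynkin diagram, reducing the problem to the study of $\ch_{\ad}^{\mathsf{E}_6}\big|_{T'}$.

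To write this restriction explicitly, I use the Borel--de~Siebenthal decomposition $\mathsf{E}_6\supset\mathsf{A}_5\times\mathsf{A}_1$ (maximal, obtained by removing node~$2$ from the \emph{extended} Dynkin diagram; its $\mathsf{A}_5$ factor coincides with $G'$), under which the adjoint decomposes as $78 = (35,1)+(1,3)+(20,2)$. Evaluating the $\mathsf{A}_1$ factor at its identity, and using the identity $\ch_1^{\mathsf{A}_5}\ch_5^{\mathsf{A}_5} = \ch_{\ad}^{\mathsf{A}_5}+1$, I obtain
\[
\ch_{\ad}^{\mathsf{E}_6}\big|_{T'} \;=\; \ch_{\ad}^{\mathsf{A}_5} + 3 + 2\,\ch_3^{\mathsf{A}_5} \;=\; h_1 h_5 + 2 h_3 + 2,
\]
where $h_k := \ch_k^{\mathsf{A}_5}$. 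Parametrizing $T'$ by $(z_0,\ldots,z_5)\in U(1)^6$ with $\prod_i z_i = 1$, the $h_k$ become the elementary symmetric functions $e_k(z_0,\ldots,z_5)$, and the problem reduces to finding the critical values of the Laurent polynomial $h(z):=e_1(z)\,e_5(z) + 2 e_3(z) + 2$ on this five-dimensional torus.

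The computation step is elimination theory. I form the ideal in $\mathbb{C}[z_0,\ldots,z_5,y]$ generated by the constraint $\prod_i z_i - 1$, the Lagrange-multiplier conditions for criticality of $h$ subject to this constraint, and $y - h(z)$; a Gr\"obner basis computation with an elimination order placing the $z_i$ above~$y$ then produces a univariate polynomial $P(y)\in\mathbb{Z}[y]$ whose roots contain all complex critical values of~$h$. The ruling-out substep consists in factoring $P$ and, for each candidate root less than the anticipated minimum~$-3$, verifying from the remaining Gr\"obner basis relations that no solution of the system lies in the compact real torus $\{|z_i|=1\}$; this should leave exactly the set $\{-3,-2,6,14,78\}$. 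For illustration, $78$ is attained at the identity, $-2$ at $(-1,\ldots,-1)$, $14$ at any permutation of $(1,1,-1,-1,-1,-1)$, and the minimum $-3$ at $(\omega,\omega,\omega,\bar\omega,\bar\omega,\bar\omega)$ with $\omega = e^{2\pi i/3}$ (for which $h_1=h_5=-3$ and $h_3=-7$, giving $9-14+2=-3$).

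Since $\mathscr{F}_{\mathsf{E}_6}(x) = \ch_{\ad}^{\mathsf{E}_6}(\mathbf{e}(x)) - 6$, the minimum $-3$ yields $\inf_{x}\mathscr{F}_{\mathsf{E}_6}(x) = -9$, and Corollary~\ref{cor:Hoffman} with $|\Phi_{\mathsf{E}_6}| = 72$ gives
\[
\chi(\mathsf{E}_6) \;\geq\; 1 - \left(\tfrac{-9}{72}\right)^{-1} \;=\; 1 + 8 \;=\; 9.
\]
The hard part will be the elimination and ruling-out substeps: with five independent torus parameters plus a constraint, the Gr\"obner basis is well out of reach by hand (and, as the excerpt warns, considerably more demanding than the $\mathsf{E}_7$ and $\mathsf{E}_8$ cases which Serre handled by hand); separating genuine real critical values from spurious complex ones will require careful case analysis guided by the computed Gr\"obner basis.
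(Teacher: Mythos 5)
Your proposal follows the paper's proof essentially verbatim: same reduction via (\textdagger) to the $\mathsf{A}_5$ subtorus, the same branching giving $\ch_{\ad}^{\mathsf{E}_6}|_{\mathsf{A}_5} = \sigma_1\sigma_5 + 2\sigma_3 + 2$ in elementary symmetric functions of six unit-circle variables with product~$1$, and the same Gr\"obner-basis elimination strategy, with the explicit witness points you give (including the cube-root-of-unity configuration attaining $-3$) matching the paper's. The only item you leave as a black box — ruling out spurious complex critical values on the real torus — is exactly what the paper carries out, finding a single spurious value $-66$ and eliminating it by observing that $\sigma_5^3 = -6^3$ lies in the corresponding Gr\"obner basis, forcing $|\sigma_5| = 6$ and hence all $u_i$ equal, which a direct check then excludes.
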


\proof If $z$ is a critical point of
$\ch_{\ad}^{\mathsf{E}_6} = \ch_2^{\mathsf{E}_6}$, i.e., if the differential
$d\ch_2^{\mathsf{E}_6}$ vanishes there, then by (\textdagger) we can assume
that $z$ belongs to (the Lie algebra $\mathfrak{t}$
of the maximal torus of) the Lie subgroup $\mathsf{A}_5$ defined by removing node $2$
from the Dynkin diagram of $\mathsf{E}_6$.

Now $\ch_2^{\mathsf{E}_6} |_{\mathsf{A}_5} = 2\ch_3^{\mathsf{A}_5} + \ch_1^{\mathsf{A}_5}\,\ch_5^{\mathsf{A}_5} +
2$.  In more detail, the branching rule for the maximal subgroup $\mathsf{A}_1
\times \mathsf{A}_5$ of $\mathsf{E}_6$ gives: $\ch_2^{\mathsf{E}_6} |_{\mathsf{A}_1 \times \mathsf{A}_5} =
\ch_{\varpi_1}^{\mathsf{A}_1} \ch_{\varpi_3}^{\mathsf{A}_5} + \ch_{\varpi_1 +
  \varpi_5}^{\mathsf{A}_5} + \ch_{2\varpi_1}^{\mathsf{A}_1}$ as witnessed by Sage:

\begin{verbatim}
sage: E6 = WeylCharacterRing("E6", style="coroots")
sage: br = branching_rule(E6, "A1xA5","extended")
sage: br.branch(E6(E6.fundamental_weights()[2]))
A1xA5(1,0,0,1,0,0) + A1xA5(2,0,0,0,0,0) + A1xA5(0,1,0,0,0,1)
\end{verbatim}

We then observe that
\[
\ch_{\varpi_1 + \varpi_5}^{\mathsf{A}_5} = \ch_1^{\mathsf{A}_5}
\ch_5^{\mathsf{A}_5} - 1\]
and that
\[
\ch_{2\varpi_1}^{\mathsf{A}_1} = (\ch_1^{\mathsf{A}_1})^2 -
1,\]
so that
\[
\ch_2^{\mathsf{E}_6} |_{\mathsf{A}_1 \times \mathsf{A}_5} = \ch_1^{\mathsf{A}_1} \ch_3^{\mathsf{A}_5}
+ (\ch_1^{\mathsf{A}_5}\, \ch_5^{\mathsf{A}_5} - 1) + ((\ch_1^{\mathsf{A}_1})^2 - 1).
\]
Evaluating at the identity of $\mathsf{A}_1$ (where $\ch_1^{\mathsf{A}_1} = 2$), we get
\[
\ch_2^{\mathsf{E}_6} |_{\mathsf{A}_5} = 2 \ch_3^{\mathsf{A}_5} + \ch_1^{\mathsf{A}_5}\,
\ch_5^{\mathsf{A}_5} + 2
\]
as announced.

Now the fundamental characters $\ch_i$ of $\mathsf{A}_5$ are the elementary
symmetric functions $\sigma_i$ of six variables $u_0,\ldots,u_5$
ranging over the unit circle $\mathbb{U} := \{u \in \mathbb{C} :
|u|=1\}$ and constrained by $u_0 u_1 u_2 u_3 u_4 u_5 = 1$ (the
eigenvalues of the element of $\mathit{SU}_6$).  This means
that we are to compute the critical values of $h := 2\sigma_3 +
\sigma_1 \sigma_5 + 2$ over $\{(u_0,\ldots,u_5)\in\mathbb{U}^6 : u_0
u_1 u_2 u_3 u_4 u_5 = 1\}$ (which is the maximal torus of $\mathsf{A}_5$).

This concludes the reduction step (which is simpler in the case of
$\mathsf{E}_6$ than for $\mathsf{E}_7,\mathsf{E}_8$), and we now
proceed to the computation step (which, compared to
$\mathsf{E}_7,\mathsf{E}_8$, has fewer cases to consider, but is
computationally more challenging).

By elimination theory, we can compute the critical values for
$u_0,\ldots,u_5$ ranging over $\mathbb{C}^6$ subject to $u_0 u_1 u_2
u_3 u_4 u_5 = 1$: consider the ideal of $\mathbb{C}[u_0,\ldots,u_5,y]$
generated by $u_i \frac{\partial h}{\partial u_i} - u_0 \frac{\partial
  h}{\partial u_0}$ for $i=1,\ldots,5$ (because saying that $dh$ is
proportional to $d(u_0\cdots u_5)$ means $u_i \frac{\partial
  h}{\partial u_i} = u_0 \frac{\partial h}{\partial u_0}$) and also $y
- h$; and perform elimination of the variables $u_0,\ldots,u_5$ (by
computing a Gr\"obner basis for a monomial order for which $y <
u_0^{i_0} \cdots u_5^{i_5}$ for any $i_0,\ldots,i_5$ not all zero) in
this ideal to obtain the projection on the $y$ coordinate of the
critical points.

By computing the Gr\"obner basis of the corresponding ideal we obtain
that the resulting set of possible critical values is $-66$, $-3$, $-2$,
$6$, $14$ and $78$.

Now the critical value $-66$ cannot be attained on
\[
\{(u_0,\ldots,u_5)\in\mathbb{U}^6 : u_0 u_1 u_2 u_3 u_4 u_5 = 1\}.
\]
We add the inequality $y+66$ to the ideal and recompute a Gr\"obner
basis. In this basis we have $\sigma_5^3 = -6^3$.

And this is impossible because $\sigma_5$ is the sum of the
$u_i^{-1}$, which can only take the value $6$ in absolute value
provided all the $u_i$ are equal to one and the same $6$-th root of
unity $\zeta$, in which case $\sigma_1 = 6\zeta$ and $\sigma_5 =
6\zeta^{-1}$ and $\sigma_3 = 20\zeta^3$ and by checking the possible
$\zeta$ one notices that $h = 2\sigma_3 + \sigma_1\,\sigma_5 + 2$ does
not, in fact, take the value $-66$.

The value $-3$, on the other hand, is attained, namely when
three of the $u_i$ are equal to one primitive cube root of unity and
the other three are equal to the other. So it is its minimum and so
a critical value.

By further Gr\"obner basis computation we obtain the $78$ is attained
only at $u_i=1$. We also obtained that the critical value $14$ is
attained only with four $u_i$ set at $-1$ and two $u_i$ set at $1$.
The critical value $6$ is attained only by fixing two $u_i$ at $1$, two
at $e^{i 2\pi/3}$ and two at $e^{-2i \pi/3}$.
The critical value $-2$ corresponds to a manifold of dimension $2$.
In this manifold is the point with four $u_i$ set at $1$ and two $u_i$ set at $-1$.

From the formula
${\mathcal F}_{\mathsf{E}_6}(x) = \ch_{\ad}^{\mathsf{E}_6}(x) - 6$ we
get $\inf_{x} {\mathcal F}_{\mathsf{E}_6}(x) = -9$ and then using
Corollary~\ref{cor:Hoffman}
$\chi(\mathsf{E}_6) \geq 1 - (-9/72)^{-1} = 9$. \qed

\begin{theorem}\label{Theorem_CritValues_E8}
The set of critical values of $\ch_{\ad}^{\mathsf{E}_8}$ is $-8$, $-4$, $-\frac{104}{27}$, $-\frac{57}{16}$, $-3$, $-2$, $0$, $5$, $24$, $248$.
We therefore have $\inf_{x} {\mathcal F}_{\mathsf{E}_8} = -16$ and $\chi({\mathsf E}_8) \geq 16$.
\end{theorem}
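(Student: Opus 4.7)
The plan is to follow exactly the template used for Theorems~\ref{Theorem_CritValues_E6} and~\ref{Theorem_CritValues_E7}: a \emph{reduction step} based on the Kostant--Steinberg trick~(\textdagger), followed by a \emph{computation step} using Gröbner basis elimination to list and then sift the candidate critical values.

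\textbf{Reduction step.} Since $\ch_{\ad}^{\mathsf{E}_8}=\ch_8^{\mathsf{E}_8}$ is a single fundamental character, Proposition~\ref{prop:regularelementdifferentiels} applied via~(\textdagger) tells us that every critical point lies on some root hyperplane; by transitivity of the affine Weyl group on root hyperplanes we may assume this is $\{\alpha_0=0\}$. For $\mathsf{E}_8$ the extender of the extended Dynkin diagram attaches at node~$8$, so $-\alpha_0=\varpi_8$, and the critical point lies in the Lie algebra of the maximal torus of the $\mathsf{E}_7$ subgroup obtained by deleting node~$8$. Using the Borel--de~Siebenthal chain $\mathsf{E}_8\supset\mathsf{A}_1\times\mathsf{E}_7\supset\mathsf{E}_7$ (set the $\mathsf{A}_1$-factor to the identity), the adjoint character restricts as
\[
\ch_{\ad}^{\mathsf{E}_8}\big|_{\mathsf{E}_7} = 3 + \ch_1^{\mathsf{E}_7} + 2\,\ch_7^{\mathsf{E}_7},
\]
which is the character-theoretic shadow of $248=3+133+2\cdot 56$. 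Since no fundamental $\mathsf{E}_7$-character appears more than once, (\textdagger) applies again: the highest root of $\mathsf{E}_7$ is $\varpi_1$, so we restrict further to the $\mathsf{D}_6$ subgroup obtained by deleting node~$1$, and branching $\mathsf{E}_7\supset\mathsf{A}_1\times\mathsf{D}_6$ expresses the function explicitly in terms of the vector, adjoint and two half-spinor characters of $\mathsf{D}_6$. On the six-dimensional torus of $\mathsf{D}_6$ this becomes an explicit Laurent polynomial whose critical values we seek.

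\textbf{Computation step.} Write the resulting function as a Laurent polynomial $h(u_1,\ldots,u_r)$ with an appropriate product constraint; form the ideal in $\mathbb{C}[u_1,\ldots,u_r,y]$ generated by the critical-point equations (suitable differences of $u_i\,\partial h/\partial u_i$) together with $y-h$, and compute a Gröbner basis with an elimination order placing $y$ last. Factoring the resulting univariate polynomial in $y$ gives a finite list of candidate critical values. Each candidate that coincides with one of the values in the announced list is verified directly by exhibiting an explicit torus element (small roots of unity, or coordinates $\pm 1$, will suffice, just as for $\mathsf{E}_6$). The spurious candidate values strictly below $-8$ are ruled out one at a time: append $y-y_0$ to the ideal, recompute a Gröbner basis, and exhibit one of its elements imposing an equation that cannot be satisfied on the unit torus (exactly as in the $\sigma_5^3=-6^3$ argument in the proof of Theorem~\ref{Theorem_CritValues_E6}). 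Once $\min_x \ch_{\ad}^{\mathsf{E}_8}(x)=-8$ is established, Proposition~\ref{prop:adjcharvals} gives $\inf_x \mathcal{F}_{\mathsf{E}_8}(x) = -8-8 = -16$, and Corollary~\ref{cor:Hoffman} yields $\chi(\mathsf{E}_8)\geq 1-240/(-16)=16$.

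The main obstacle is expected to be computational: after the two applications of~(\textdagger) we are still left with six variables and a polynomial combining vector, adjoint and spinor characters of $\mathsf{D}_6$, so the Gröbner basis computations are considerably heavier than for $\mathsf{E}_6$ or $\mathsf{E}_7$ and may have to be split into strata (fixing which coordinates are at specific roots of unity) to remain feasible; this is essentially why J.-P.~Serre's hand proof for $\mathsf{E}_7,\mathsf{E}_8$ needs to be adapted for symbolic computation in Sage rather than carried out directly.
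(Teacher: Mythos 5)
You have the right framework and the first two reductions are exactly the paper's: deleting node~$8$ of $\mathsf{E}_8$ gives $\mathsf{E}_7$ with $\ch_8^{\mathsf{E}_8}|_{\mathsf{E}_7}=\ch_1^{\mathsf{E}_7}+2\ch_7^{\mathsf{E}_7}+3$, and deleting node~$1$ of $\mathsf{E}_7$ gives $\mathsf{D}_6$ with $\ch_{\ad}^{\mathsf{E}_8}|_{\mathsf{D}_6}=\ch_2^{\mathsf{D}_6}+2\ch_5^{\mathsf{D}_6}+2\ch_6^{\mathsf{D}_6}+4\ch_1^{\mathsf{D}_6}+6$. However, you stop the reduction step here and propose eliminating over the six-dimensional torus of $\mathsf{D}_6$. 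This misses the fact that the restricted expression on $\mathsf{D}_6$ \emph{still} contains each fundamental character $\ch_i^{\mathsf{D}_6}$ at most once, so the trick~(\textdagger) is applicable two more times: delete node~$2$ of $\mathsf{D}_6$ to land in $\mathsf{A}_1\times\mathsf{D}_4$, and then delete node~$2$ of $\mathsf{D}_4$ to land in $\mathsf{A}_1\times(\mathsf{A}_1)^3$. The paper carries this out precisely and arrives at a four-variable polynomial
\[
h = 2\sigma_3 + \sigma_1^2 + 2(s+1)\sigma_2 + 4(s+2)\sigma_1 + s^2 + 8s + 4
\]
in the $\mathsf{A}_1$-characters $s,t_1,t_3,t_4$, where $\sigma_i$ are the elementary symmetric functions in $t_1,t_3,t_4$.

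Stopping at $\mathsf{D}_6$ is not merely a heavier variant of the same computation: the authors themselves remark that the single-step reduction to rank~$5$ in the $\mathsf{E}_6$ case is already ``computationally more challenging'' than the $\mathsf{E}_7,\mathsf{E}_8$ cases precisely \emph{because} the latter are driven down to four variables. There is no evidence a six-variable $\mathsf{D}_6$ elimination is feasible; splitting into strata, as you hedge, is not spelled out and might well not terminate. Moreover, the deeper reduction introduces an essential subtlety you do not anticipate: once the problem is phrased in terms of $\mathsf{A}_1$-characters $t_i=u_i+u_i^{-1}$ (valued in $[-2,2]$), ``critical'' no longer means simply $\partial h/\partial t_i=0$; one must allow $t_i=\pm 2$ for each variable, because those are the critical values of $u\mapsto u+u^{-1}$ on the unit circle. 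This is what produces the $10\times 3 = 30$-case analysis in the paper. Your sifting criterion (``exhibit a torus element'' or rule out via a Gröbner element) would then have to be replaced by the paper's filter $|t_1|,|t_3|,|t_4|,|s|\le 2$ applied per case. Finally, a minor factual point: Serre's proof for $\mathsf{E}_7,\mathsf{E}_8$ \emph{was} carried out entirely by hand; Sage was used by the authors simply because they chose not to reproduce that feat, not because the Gröbner computations are intrinsically heavier for $\mathsf{E}_7,\mathsf{E}_8$ than for $\mathsf{E}_6$ — the paper says the opposite.
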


\proof If $z$ is a critical point of
$\ch_{\ad}^{\mathsf{E}_8} = \ch_8^{\mathsf{E}_8}$, i.e., if the differential
$d\ch_8^{\mathsf{E}_8}$ vanishes there, then by (\textdagger) we can assume
that $z$ belongs to (the Lie algebra $\mathfrak{t}$
of the maximal torus of) the Lie subgroup $\mathsf{E}_7$ defined by removing node $8$
from the Dynkin diagram of $\mathsf{E}_8$.

Now
\[
\ch_8^{\mathsf{E}_8} |_{\mathsf{E}_7} = \ch_1^{\mathsf{E}_7} +
2\ch_7^{\mathsf{E}_7} + 3 \text{ (from
$\ch_8^{\mathsf{E}_8} |_{\mathsf{A}_1 \times \mathsf{E}_7} = \ch_1^{\mathsf{E}_7} +
\ch_1^{\mathsf{A}_1}\,\ch_7^{\mathsf{E}_7} +
((\ch_1^{\mathsf{A}_1})^2-1)$).}
\]

Now since neither $\ch_1^{\mathsf{E}_7}$ nor $\ch_7^{\mathsf{E}_7}$ appear more than
once (or with any exponent) in $\ch_1^{\mathsf{E}_7} + 2\ch_7^{\mathsf{E}_7} + 3$, we
can apply (\textdagger) again: at a point $z$ where the differential
of this expression vanishes, the differentials of the fundamental
characters are not independent, so $z$ belongs to (the Lie algebra $\mathfrak{t}$
of the maximal torus of) the Lie subgroup $\mathsf{D}_6$ defined
by removing node $1$ from the Dynkin diagram of $\mathsf{E}_7$.

We have
\[
\ch_1^{\mathsf{E}_7} |_{\mathsf{D}_6} = \ch_2^{\mathsf{D}_6} +
2\ch_5^{\mathsf{D}_6} + 3 \text{ (from
$\ch_1^{\mathsf{E}_7} |_{\mathsf{A}_1\times \mathsf{D}_6} = \ch_2^{\mathsf{D}_6} +
\ch_1^{\mathsf{A}_1}\,\ch_5^{\mathsf{D}_6} +
((\ch_1^{\mathsf{A}_1})^2-1)$)}
\]
and
\[
\ch_7^{\mathsf{E}_7}
|_{\mathsf{D}_6} = \ch_6^{\mathsf{D}_6} + 2\ch_1^{\mathsf{D}_6} \text{ (from $\ch_7^{\mathsf{E}_7} |_{\mathsf{A}_1\times
  \mathsf{D}_6} = \ch_6^{\mathsf{D}_6} +
\ch_1^{\mathsf{A}_1}\,\ch_1^{\mathsf{D}_6}$),}
\]
giving:
\[
\ch_{\ad}^{\mathsf{E}_8}|_{\mathsf{D}_6} = \ch_2^{\mathsf{D}_6} + 2\ch_5^{\mathsf{D}_6} + 2\ch_6^{\mathsf{D}_6} +
4\ch_1^{\mathsf{D}_6} + 6.
\]

Again, there are no multiple occurrences of the various $\ch_i^{\mathsf{D}_6}$,
so we can apply (\textdagger) one more time: at a point $z$ where the
differential of this expression vanishes, the differentials of the
fundamental characters are not independent, so $z$ belongs to (the Lie algebra $\mathfrak{t}$
of the maximal torus of) the Lie subgroup
$\mathsf{D}_4\times \mathsf{A}_1$ defined by removing node $2$ from the Dynkin diagram
of $\mathsf{D}_6$.

Here the branching gets more complicated: We use
\begin{verbatim}
sage: WeylCharacterRing("D6").maximal_subgroups() 
\end{verbatim}
within Sage to find the correct rule for branching to
$\mathsf{A}_1 \times \mathsf{A}_1 \times \mathsf{D}_4$, and in
principle the two $\mathsf{A}_1$ factors are not symmetric (although
in the end it turns out that they are, up to a symmetry of
$\mathsf{D}_4$) so one must use \verb=br.describe()= to chop off the
correct $\mathsf{A}_1$ factor (call it $\mathsf{A}_1^\circ$ in what
follows).  We find:
\begin{itemize}
\item $\ch_2^{\mathsf{D}_6} |_{\mathsf{A}_1\times \mathsf{D}_4} = \ch_2^{\mathsf{D}_4} + 2 \ch_1^{\mathsf{A}_1}\,
  \ch_1^{\mathsf{D}_4} + ((\ch_1^{\mathsf{A}_1})^2-1) + 3$ (from $\ch_2^{\mathsf{D}_6}
  |_{\mathsf{A}_1^\circ\times \mathsf{A}_1\times \mathsf{D}_4} = \ch_2^{\mathsf{D}_4} +
  \ch_1^{\mathsf{A}_1^\circ}\, \ch_1^{\mathsf{A}_1}\, \ch_1^{\mathsf{D}_4} + ((\ch_1^{\mathsf{A}_1})^2-1)
  + ((\ch_1^{\mathsf{A}_1^\circ})^2-1)$).
\item $\ch_5^{\mathsf{D}_6} |_{\mathsf{A}_1\times \mathsf{D}_4} = \ch_1^{\mathsf{A}_1}\, \ch_4^{\mathsf{D}_4} + 2
  \ch_3^{\mathsf{D}_4}$ (from $\ch_5^{\mathsf{D}_6} |_{\mathsf{A}_1^\circ\times \mathsf{A}_1\times \mathsf{D}_4} =
  \ch_1^{\mathsf{A}_1}\, \ch_4^{\mathsf{D}_4} + \ch_1^{\mathsf{A}_1^\circ}\, \ch_3^{\mathsf{D}_4}$).
\item $\ch_6^{\mathsf{D}_6} |_{\mathsf{A}_1\times \mathsf{D}_4} = \ch_1^{\mathsf{A}_1}\, \ch_3^{\mathsf{D}_4} + 2
  \ch_4^{\mathsf{D}_4}$ (from $\ch_5^{\mathsf{D}_6} |_{\mathsf{A}_1^\circ\times \mathsf{A}_1\times \mathsf{D}_4} =
  \ch_1^{\mathsf{A}_1}\, \ch_3^{\mathsf{D}_4} + \ch_1^{\mathsf{A}_1^\circ}\, \ch_4^{\mathsf{D}_4}$).
\item $\ch_1^{\mathsf{D}_6} |_{\mathsf{A}_1\times \mathsf{D}_4} = \ch_1^{\mathsf{D}_4} + 2 \ch_1^{\mathsf{A}_1}$
  (from $\ch_1^{\mathsf{D}_6} |_{\mathsf{A}_1^\circ\times \mathsf{A}_1\times \mathsf{D}_4} = \ch_1^{\mathsf{D}_4} +
  \ch_1^{\mathsf{A}_1^\circ}\,\ch_1^{\mathsf{A}_1}$).
\end{itemize}
Finally, we get:
\[
\begin{split}
\ch_{\ad}^{\mathsf{E}_8}|_{\mathsf{A}_1\times \mathsf{D}_4} = & \ch_2^{\mathsf{D}_4} + 2
\ch_1^{\mathsf{A}_1}\, \ch_1^{\mathsf{D}_4} + 2 \ch_1^{\mathsf{A}_1}\,\ch_4^{\mathsf{D}_4} + 2
\ch_1^{\mathsf{A}_1}\, \ch_3^{\mathsf{D}_4} \penalty-100 + 4\ch_1^{\mathsf{D}_4} + 4\ch_4^{\mathsf{D}_4} +
4\ch_3^{\mathsf{D}_4} \penalty-100 \\
& + (\ch_1^{\mathsf{A}_1})^2 +
8(\ch_1^{\mathsf{A}_1}) + 8.
\end{split}
\]

We can now apply the reduction trick (\textdagger) one last time, for the $\mathsf{D}_4$
factor: since $\ch_1^{\mathsf{A}_1}$ is obviously independent from the
$\ch_i^{\mathsf{D}_4}$, at a point $z$ where the differential of the expression
$\ch_{\ad}^{\mathsf{E}_8}|_{\mathsf{A}_1\times \mathsf{D}_4}$ above vanishes, the differentials
of the fundamental characters $\ch_i^{\mathsf{D}_4}$ are not independent, so
$z$ belongs to (the Lie algebra $\mathfrak{t}$
of the maximal torus of) the Lie subgroup $\mathsf{A}_1 \times (\mathsf{A}_1)^3$ defined by removing node $2$
from the Dynkin diagram of $\mathsf{D}_4$.

To compute this restriction, first examine the restriction of $\mathsf{D}_4$ to
the maximal subgroup $\mathsf{A}_1 \times \mathsf{A}_1 \times \mathsf{A}_1 \times \mathsf{A}_1$ of $\mathsf{D}_4$
(seen by extending the Dynkin diagram of $\mathsf{D}_4$ and removing the node
connected to the four others): if we call $t_1,\ldots,t_4$ the
(single) fundamental characters of the various $\mathsf{A}_1$ factors, numbered
in the same way as the nodes of the extended diagram of $\mathsf{D}_4$ from
which they come (except that $t_2$ comes from the extending node),
then $\ch_1^{\mathsf{D}_4} |_{(\mathsf{A}_1)^4} = t_1 t_2 + t_3 t_4$ and $\ch_3^{\mathsf{D}_4}
|_{(\mathsf{A}_1)^4} = t_1 t_4 + t_2 t_3$ and $\ch_4^{\mathsf{D}_4} |_{(\mathsf{A}_1)^4} = t_1
t_3 + t_2 t_4$ and finally
\[
\ch_2^{\mathsf{D}_4} |_{(\mathsf{A}_1)^4} = t_1 t_2 t_3 t_4
+ t_1^2 + t_2^2 + t_3^2 + t_4^2 - 4.
\]

Finally, restricting $\ch_{\ad}^{\mathsf{E}_8}$ to $\mathsf{A}_1 \times (\mathsf{A}_1)^3$ (the
first $\mathsf{A}_1$ factor being the factor $\mathsf{A}_1$ in $\mathsf{A}_1\times \mathsf{D}_4$ earlier
and the other three coming from nodes $1,3,4$ of $\mathsf{D}_4$ as described in
the previous paragraphs), we have
\[
\ch_{\ad}^{\mathsf{E}_8}|_{\mathsf{A}_1\times
  (\mathsf{A}_1)^3} = 2\sigma_3 + \sigma_1^2 + 2(s+1)\sigma_2 + 4(s+2)\sigma_1
+ s^2 + 8s + 4,
\]
where $s$ is the fundamental character from the first $\mathsf{A}_1$
factor and $\sigma_i$ are the elementary symmetric functions in the
fundamental characters $t_1,t_3,t_4$ of the three other $\mathsf{A}_1$
factors.

We now need to find the critical values of this function
\[
h =
2\sigma_3 + \sigma_1^2 + 2(s+1)\sigma_2 + 4(s+2)\sigma_1 + s^2 + 8s +
4. 
\]
There is one subtlety, however: ``critical'' means that for each
$t_i$, as well as for $s$, we either have $\frac{\partial h}{\partial t_i} = 0$
(resp. $\frac{\partial h}{\partial s} = 0$) \emph{or} $t_i = \pm 2$
(resp. $s = \pm 2$).  Indeed, each $t_i$ (as well as $s$) is
a character of $\mathsf{A}_1$, so it is $u + u^{-1}$ for the two eigenvalues
$u,u^{-1}$ of the element of $\mathit{SU}_2$ in question, so the
critical values of $t_i$ itself are $\pm 2$.

This concludes the \emph{reduction step} for $\mathsf{E}_8$.  The
\emph{computation step} is then to use elimination theory, in each
possible case depending on how many of the $t_i$ satisfy
$\frac{\partial h}{\partial t_i} = 0$ and how many satisfy $t_i = 2$
and $t_i = -2$, and similarly for $s$, to find the corresponding
values of $h = 2\sigma_3 + \sigma_1^2 + 2(s+1)\sigma_2 +
4(s+2)\sigma_1 + s^2 + 8s + 4$.

For this, we must consider $10 \times 3 = 30$ cases according to
constraints placed on the $t_i$ (which can be set equal to $+2$ or to
$-2$ or to satisfy $\frac{\partial h}{\partial t_i} = 0$, which we
denote as ``$t_i = \partial$'' for short) and on $s$ (similarly $s=+2$
or $s=-2$ or $s=\partial$).  In each case, we consider the ideal of
$\mathbb{C}[s,t_1,t_3,t_4,y]$ generated by the $t_i - 2$ or $t_i + 2$
or $\frac{\partial h}{\partial t_i}$ as the case may be, and similarly
for $s$, and also $y - h$; and perform elimination of the variables
$s,t_1,t_3,t_4$ (by computing a Gr\"obner basis for a monomial order for
which $y < s^j t_1^{i_1} t_3^{i_3} t_4^{i_4}$ for any $j,i_1,i_3,i_4$
not all zero) in this ideal to obtain the projection on the $y$
coordinate of the critical points.

By considering all cases we find that the set of possible critical values
is $-652$, $-27$, $-12$, $-\frac{64}{7}$, $-8$, $-4$, $-\frac{104}{27}$, $-\frac{57}{16}$, $-3$,
$-2$, $0$, $5$, $24$ and $248$.
For each such value and each possible critical value we compute the
manifold which turns out to be always $0$-dimensional.
The points of those manifolds can be enumerated and we obtain the list of critical
values by keeping only the values for which at least one of the point has
$\vert t_1\vert$, $\vert t_3\vert$, $\vert t_4\vert$, $\vert s\vert\leq 2$.

From the formula ${\mathcal F}_{\mathsf{E}_8}(x) = \ch_{\ad}^{\mathsf{E}_8}(x) - 8$
we get $\inf_{x} {\mathcal F}_{\mathsf{E}_8}(x) = -16$ and then using Corollary~\ref{cor:Hoffman}
$\chi(\mathsf{E}_8) \geq 1 - (-16/240)^{-1} = 16$. \qed

\begin{theorem}
\label{Theorem_CritValues_E7}
The set of critical values of $\ch_{\ad}^{\mathsf{E}_7}$ is $-7$, $-3$, $-2$, $1$, $\frac{17}{5}$, $5$, $25$, $133$.
We therefore have $\inf_{x} {\mathcal F}_{\mathsf{E}_7} = -14$ and $\chi({\mathsf E}_7) \geq 10$.
\end{theorem}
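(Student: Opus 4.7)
\proof (Proof plan.) The plan is to follow exactly the strategy used in Theorems~\ref{Theorem_CritValues_E6} and~\ref{Theorem_CritValues_E8}: a reduction step that repeatedly applies the trick~(\textdagger) to descend from $\mathsf{E}_7$ through $\mathsf{D}_6$ and $\mathsf{A}_1\times\mathsf{D}_4$ down to $\mathsf{A}_1\times(\mathsf{A}_1)^3$, followed by a computation step using Gr\"obner basis elimination to extract candidate critical values, and a ruling-out substep to discard spurious ones that cannot be realized in the compact torus.

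For the reduction, note that $\ch_{\ad}^{\mathsf{E}_7}=\ch_1^{\mathsf{E}_7}$ is itself fundamental and the extender of the affine $\mathsf{E}_7$ diagram attaches to node~$1$, so~(\textdagger) places any critical point of $\ch_{\ad}^{\mathsf{E}_7}$ in the Lie algebra of the maximal torus of the $\mathsf{D}_6$ subgroup obtained by removing node~$1$. The branching $\ch_1^{\mathsf{E}_7}|_{\mathsf{D}_6}=\ch_2^{\mathsf{D}_6}+2\,\ch_5^{\mathsf{D}_6}+3$ is already recorded in the proof of Theorem~\ref{Theorem_CritValues_E8}. Since no fundamental character of $\mathsf{D}_6$ appears with multiplicity or in a nonlinear way, (\textdagger) can be applied again to reach $\mathsf{A}_1\times\mathsf{D}_4$ (removing node~$2$ of $\mathsf{D}_6$), yielding after substitution of the branching formulas from the $\mathsf{E}_8$ proof
\[
\ch_1^{\mathsf{E}_7}|_{\mathsf{A}_1\times\mathsf{D}_4}
=\ch_2^{\mathsf{D}_4}+2s\,\ch_1^{\mathsf{D}_4}+2s\,\ch_4^{\mathsf{D}_4}+4\,\ch_3^{\mathsf{D}_4}+s^2+5,
\]
with $s=\ch_1^{\mathsf{A}_1}$. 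A third application of~(\textdagger) to the $\mathsf{D}_4$ factor (removing its node~$2$) gives a polynomial $h(s,t_1,t_3,t_4)$, obtained by substituting the formulas $\ch_i^{\mathsf{D}_4}|_{(\mathsf{A}_1)^3}$ already used in the $\mathsf{E}_8$ proof, whose critical values with $|s|,|t_i|\le 2$ coincide with those of $\ch_{\ad}^{\mathsf{E}_7}$.

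For the computation, since $s$ and each $t_i$ is itself an $\mathsf{A}_1$-character (critical values $\pm 2$), a critical point of $h$ must have each of the four variables either annihilating the corresponding partial derivative or equal to $\pm 2$, giving $3^4=81$ systems, reducible by the $\mathfrak{S}_3$-symmetry permuting $t_1,t_3,t_4$. For each system I would form the ideal of $\mathbb{C}[s,t_1,t_3,t_4,y]$ generated by $y-h$ together with the appropriate partials or equalities, and compute a Gr\"obner basis for a monomial order in which $y$ is smallest, so that projection onto the $y$-axis yields a univariate polynomial whose rational roots are the candidate critical values. The main obstacle, exactly as for $\mathsf{E}_8$, will be the ruling-out substep: the elimination treats $s,t_i$ as complex unknowns, so any spurious value strictly below $-7$ (and more generally any candidate not realized on the compact torus) must be individually dismissed by adjoining $y-c$ to the Gr\"obner basis and checking that some polynomial constraint in the resulting basis contradicts $|s|,|t_i|\le 2$. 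Retaining only the realizable candidates gives the list $\{-7,-3,-2,1,17/5,5,25,133\}$; since ${\mathcal F}_{\mathsf{E}_7}=\ch_{\ad}^{\mathsf{E}_7}-7$ we conclude $\inf_x{\mathcal F}_{\mathsf{E}_7}(x)=-14$, and Corollary~\ref{cor:Hoffman} together with $|\Vor(\mathsf{E}_7)|=126$ yields $\chi(\mathsf{E}_7)\ge 1-(-14/126)^{-1}=10$. \qed
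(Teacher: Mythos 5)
Your plan follows the paper's strategy closely, and almost all of it is correct: the reduction chain $\mathsf{E}_7 \supset \mathsf{D}_6 \supset \mathsf{A}_1\times\mathsf{D}_4 \supset \mathsf{A}_1\times(\mathsf{A}_1)^3$ via (\textdagger) is exactly what the paper does, and your branching formula $\ch_1^{\mathsf{E}_7}|_{\mathsf{A}_1\times\mathsf{D}_4}=\ch_2^{\mathsf{D}_4}+2s\ch_1^{\mathsf{D}_4}+2s\ch_4^{\mathsf{D}_4}+4\ch_3^{\mathsf{D}_4}+s^2+5$ matches.

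There is, however, a concrete error in the computation step: you assert that the case count $3^4$ is ``reducible by the $\mathfrak{S}_3$-symmetry permuting $t_1,t_3,t_4$'', but the function $h$ in the $\mathsf{E}_7$ case is \emph{not} $\mathfrak{S}_3$-symmetric. Your own restriction formula already shows this: $\ch_1^{\mathsf{D}_4}$ and $\ch_4^{\mathsf{D}_4}$ carry coefficient $2s$ while $\ch_3^{\mathsf{D}_4}$ carries coefficient $4$, so $t_3$ is singled out. After restricting further to $\mathsf{A}_1\times(\mathsf{A}_1)^3$ one gets
\[
h = 2(\sigma_3-\sigma_2)+\sigma_1^2 + 2s(t_1+t_4)(t_3+2)+4(t_1t_4+2t_3)+s^2+5,
\]
whose only residual symmetry in $t_1,t_3,t_4$ is the transposition $t_1\leftrightarrow t_4$. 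Consequently the number of systems to check is $3\times 3\times 6 = 54$ (three choices for $s$, three for $t_3$, and six unordered choices for $\{t_1,t_4\}$), not the roughly $81/|\mathfrak{S}_3|$ you would get from full $\mathfrak{S}_3$-symmetry; it is the $\mathsf{E}_8$ case, not $\mathsf{E}_7$, where $h$ happens to be a symmetric polynomial in all three $t_i$ and one has only $10\times 3 = 30$ cases. A secondary point your plan does not anticipate, but which the paper handles: for the candidate value $-2$ the critical locus is not zero-dimensional in one case and the point enumeration does not finish in two others, so the ruling-out substep must be adapted by exhibiting a single realizable point attaining $-2$ rather than enumerating the whole variety.
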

\proof We have $\ch_{\ad}^{\mathsf{E}_7} = \ch_1^{\mathsf{E}_7}$
and all the reduction step has already been explained above in
the $\mathsf{E}_8$ case: we get
\[
\ch_{\ad}^{\mathsf{E}_7}|_{\mathsf{A}_1\times \mathsf{D}_4} = \ch_2^{\mathsf{D}_4}
+ 2 \ch_1^{\mathsf{A}_1}\, \ch_1^{\mathsf{D}_4} + 2 \ch_1^{\mathsf{A}_1}\,\ch_4^{\mathsf{D}_4}
\penalty-100 + 4\ch_3^{\mathsf{D}_4} + (\ch_1^{\mathsf{A}_1})^2
\penalty-100 + 5,
\] 
so that
\[
\ch_{\ad}^{\mathsf{E}_7}|_{\mathsf{A}_1\times (\mathsf{A}_1)^4} = 2(\sigma_3-\sigma_2) +
\sigma_1^2 + 2 s (t_1+t_4)(t_3 + 2) + 4(t_1 t_4 + 2 t_3) + s^2 + 5.
\]

The computation step is then similar to $\mathsf{E}_8$, except there is now
less symmetry between the $t_i$ (one can only exchange $t_1$
and $t_4$): one must therefore distinguish $3\times 3\times 6$ cases.

By computing a Gr\"obner basis we obtain that the set of possible critical
values is $-191$, $-11$, $-35/4$, $-7$, $-3$, $-2$, $1$, $\frac{17}{5}$, $5$, $25$ and $133$.
For each case and critical value we compute the corresponding manifold
and its complex points.
In each case except one the manifold is $0$-dimensional and for two
$0$-dimensional cases the computation of the points does not finish.
Those $3$ problematic cases occur for the value $-2$.

For the other cases we compute the points and keep only the values for
which one of the points has $\vert t_1\vert$, $\vert t_3 \vert$,
$\vert t_4\vert$, $\vert s\vert \leq 2$. It turns out that the value $-2$
is attained by one of those points and so the $3$ problematic cases
do not prevent us from concluding that $-2$ is a critical value.

From the formula ${\mathcal F}_{\mathsf{E}_7}(x) = \ch_{\ad}^{\mathsf{E}_7}(x) - 7$
we get $\inf_{x} {\mathcal F}_{\mathsf{E}_7}(x) = -14$ and then using Corollary~\ref{cor:Hoffman}
$\chi(\mathsf{E}_7) \geq 1 - (-14/126)^{-1} = 10$. \qed

\bigskip

One possible extension of this work could be to consider the non-simply laced diagrams,
that is $\mathsf{B}_n$, $\mathsf{C}_n$, $\mathsf{F}_4$ and $\mathsf{G}_2$.

\bigskip

\end{document}